\newcolumntype{C}{>{\centering\arraybackslash}X}
\def\comment#1{{\sf{[#1]}}}
\def\Z{{\mathbb Z}}
\def\Q{{\mathbb Q}}
\def\R{{\mathbb R}}
\def\C{{\mathbb C}}
\def\N{{\mathbb N}}
\def\P{{\mathbb P}}
\def\H{{\mathbb H}}
\def\L{{\mathbb L}}
\def\V{{\mathbb V}}
\def\A{{\mathcal A}}
\def\cC{{\mathcal C}}
\def\cG{{\mathcal G}}
\def\cH{{\mathcal H}}
\def\I{{\mathcal I}}
\def\M{{\mathcal M}}
\def\cO{{\mathcal O}}
\def\cP{{\mathcal P}}
\def\T{{\mathcal T}}
\def\U{{\mathcal U}}
\def\X{{\mathcal X}}
\def\D{\Delta}
\def\G{\Gamma}
\def\a{\alpha}
\def\b{\beta}
\def\d{\delta}
\def\f{{\mathfrak f}}
\def\g{{\mathfrak g}}
\def\h{{\mathfrak h}}
\def\n{{\mathfrak n}}
\def\p{{\mathfrak p}}
\def\r{{\mathfrak r}}
\def\t{{\mathfrak t}}
\def\u{{\mathfrak u}}
\def\that{\hat{\t}}
\def\uhat{\hat{\u}}
\def\rhohat{{\hat{\rho}}}
\def\phihat{{\hat{\phi}}}
\def\phitilde{\tilde{\varphi}}
\def\hbar{\overline{h}}
\def\Ghat{\widehat{\G}}
\def\Khat{\widehat{K}}
\def\That{\widehat{T}}
\def\sp{\mathfrak{sp}}
\def\Ga{{\mathbb{G}_a}}
\def\Sp{{\mathrm{Sp}}}
\def\SL{{\mathrm{SL}}}
\def\GL{{\mathrm{GL}}}
\def\Rep{{\mathrm{Rep}}}
\def\LCS{\mathrm{LCS}}
\def\dot{\bullet}
\def\bs{\backslash}
\def\blank{\phantom{x}}
\def\bil{{\langle \blank,\blank\rangle}}
\def\bracket{{[ \blank,\blank]}}
\def\LiE{{\sf{LiE}}}
\def\Rdual{\check{R}}
\newcommand\im{\operatorname{im}}
\newcommand\Span{\operatorname{span}}
\newcommand\Ad{\operatorname{Ad}}
\newcommand\ad{\operatorname{ad}}
\newcommand\Hom{\operatorname{Hom}}
\newcommand\End{\operatorname{End}}
\newcommand\Diff{\operatorname{Diff}}
\newcommand\Aut{\operatorname{Aut}}
\newcommand\Out{\operatorname{Out}}
\newcommand\Der{\operatorname{Der}}
\newcommand\OutDer{\operatorname{OutDer}}
\newcommand\Gr{\operatorname{Gr}}
\newcommand\coker{\operatorname{coker}}
\newtheorem{theorem}{Theorem}[section]
\newtheorem{lemma}[theorem]{Lemma}
\newtheorem{proposition}[theorem]{Proposition}
\newtheorem{corollary}[theorem]{Corollary}
\newtheorem{bigtheorem}{Theorem}
\theoremstyle{definition}
\newtheorem{conjecture}[theorem]{Conjecture}
\theoremstyle{remark}
\newtheorem{question}[theorem]{Question}
\begin{document}

\title{Genus $3$ Mapping Class Groups are not K\"ahler}

\author{Richard Hain}
\address{Department of Mathematics\\ Duke University\\
Durham, NC 27708-0320}
\email{hain@math.duke.edu}

\thanks{Supported in part by grant DMS-1005675 from the National Science
Foundation}

\date{\today}

\subjclass{Primary 14H30; Secondary 14H15, 20F34, 32Q15}

\begin{abstract}
We prove that finite index subgroups of genus $3$ mapping class and Torelli
groups that contain the group generated by Dehn twists on bounding simple closed
curves are not K\"ahler. These results are deduced from explicit presentations
of the unipotent (aka, Malcev) completion of genus 3 Torelli groups and of the
relative completions of genus 3 mapping class groups. The main results follow
from the fact that these presentations are not quadratic. To complete the
picture, we compute presentations of completed Torelli and mapping class in
genera $\ge 4$; they are quadratic. We also show that groups commensurable with
hyperelliptic mapping class groups and mapping class groups in genera $\le 2$
are not K\"ahler.
\end{abstract}

\maketitle

\tableofcontents

\section{Introduction}

Mapping class groups are groups of topological symmetries of a compact surface.
More precisely, suppose that $g$ and $n$ are non-negative integers satisfying
$2g-2+n>0$. Let $S$ be a compact oriented surface of genus $g$ and $P$ a set of
$n$ points on $S$. The mapping class group $\G_{g,n}$ is defined to be the group
$$
\G_{g,n} := \pi_0\Diff^+(S,P),
$$
of isotopy classes of orientation preserving diffeomorphisms of $S$ that fix $P$
pointwise. The Torelli group $T_{g,n}$ is the set of elements of $\G_{g,n}$ that
act trivially on $H_1(S;\Z)$. Johnson \cite{johnson:fg} proved that $T_{g,n}$ is
finitely generated for all $g\ge 3$. The {\em Johnson group} $K_{g,n}$  is the
subgroup of $T_{g,n}$ generated by Dehn twists on bounding simple closed curves
(BSCC maps, for short).

A {\em K\"ahler group} is a group that can be realized as the fundamental group
of a compact K\"ahler manifold. Groups commensurable with mapping class groups
in genus $\le 2$ are not K\"ahler.\footnote{The genus $2$ case was established
by Veliche \cite{veliche}. The cases in genera $\le 1$ seem to be folklore. We
give complete proofs in the appendix and also prove that groups commensurable
with hyperelliptic mapping class groups of any genus cannot be K\"ahler.} Our
first result is that genus $3$ mapping class groups are not K\"ahler. Suppose
$n\ge 0$.

\begin{bigtheorem}
\label{thm:main_mcg}
No finite index subgroup of $\G_{3,n}$ that contains $K_{3,n}$ can be the
fundamental group of a compact K\"ahler manifold.
\end{bigtheorem}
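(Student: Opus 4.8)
My plan is to argue by contradiction, using that the relative Malcev Lie algebra of a K\"ahler group is quadratically presented, and then to exhibit a genuinely cubic relation in the relative completion of $\G_{3,n}$. Let $\Gamma \le \G_{3,n}$ be of finite index with $K_{3,n}\subseteq\Gamma$, and suppose $\Gamma = \pi_1(M)$ for a compact K\"ahler manifold $M$. The restriction to $\Gamma$ of the standard symplectic representation has finite-index, hence Zariski-dense, image in $\Sp_{2g}(\Z)$, and the symplectic local system underlies the polarized variation of Hodge structure $H^1$ of the universal curve. Because $M$ is compact K\"ahler, the relative Malcev completion $\cG$ of $\Gamma$ with respect to $\Sp_{2g}$ inherits a mixed Hodge structure (Hain's Hodge theory of relative completions), and the formality of compact K\"ahler manifolds (Morgan) forces the Lie algebra $\u$ of its prounipotent radical to be quadratically presented: all of its relations are generated in weight $2$. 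I will contradict this by producing a weight-$3$ relation that is not a consequence of the weight-$2$ relations.

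Next I would reduce this computation from $\Gamma$ to $\G_{3,n}$ itself. Since $T_{3,n}/K_{3,n}$ is the abelian image of the Johnson homomorphism, $K_{3,n}$ coincides up to finite index with the commutator subgroup $[T_{3,n},T_{3,n}]$; thus the hypothesis $K_{3,n}\subseteq\Gamma$ guarantees that $\Gamma$ carries the full commutator structure of the Torelli group. The radical $\u$ is generated in weights $1$ and $2$, the weight-$1$ generators forming Johnson's module $\wedge^3 H/H$ (with $H = H_1(S;\Q)$) and the weight-$2$ generators being the images of the bounding-simple-closed-curve twists, all of which lie in $K_{3,n}$. Consequently the generators of $\u$ and its relations in weights $\le 3$ for $\Gamma$ agree, as $\Sp_{2g}$-modules carrying their Lie brackets, with those for $\G_{3,n}$; in particular $\u$ is non-quadratic for $\Gamma$ exactly when it is for $\G_{3,n}$. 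The bookkeeping needed to carry the weight-$1$ part cleanly through the finite-index subgroup — checking that the possibly larger abelianization of $\Gamma\cap T_{3,n}$ does not disturb the $\Sp_{2g}$-equivariant presentation — is a point that must be handled with care.

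Finally I would invoke the explicit presentation of the relative completion of $\G_{3,n}$ (equivalently of the Malcev completion of $T_{3,n}$) and isolate the offending relation. Decomposing $\wedge^2(\wedge^3 H/H)$ and the weight-$3$ bracket $[\wedge^3 H/H,\,\Gr^2\u]$ into irreducible $\Sp_6$-representations, one expects in genus $3$ an irreducible summand in weight $3$ that is killed by no combination of the weight-$2$ relations, whereas in genus $\ge 4$ no such summand survives — precisely the dichotomy recorded in the abstract. Verifying that this cubic relation is genuinely new — that it does not lie in the ideal generated by the quadratic relations, equivalently that the weight-$3$ part of $H_2$ of the true Lie algebra strictly exceeds that of its quadratic closure — is the heart of the matter and the main obstacle. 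This is an explicit, genus-$3$-specific computation in $\Sp_6$-representation theory; once it is in hand, the existence of the cubic relation contradicts the quadratic presentation forced in the first step, and the theorem follows.
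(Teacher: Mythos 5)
Your first step contains the decisive gap. You assert that the symplectic local system on $M$ ``underlies the polarized variation of Hodge structure $H^1$ of the universal curve.'' But $M$ is an arbitrary compact K\"ahler manifold with $\pi_1(M)=\Gamma$; there is no universal curve over $M$, and the local system on $M$ determined by $\rho|_\Gamma$ is a priori just a local system, with no Hodge-theoretic structure whatsoever. Hain's Hodge theory of relative completions (and its consequence that the prounipotent radical $\u$ is quadratically presented --- which for relative completion is \cite[Thm.~13.14]{hain:malcev}, not Morgan's formality) applies only when $\rho$ is the monodromy representation of a PVHS over $M$ itself. The paper closes this hole with rigidity: by Weil's criterion one needs $H^1(\Gamma,\Ad\rho)=0$, which is verified using Johnson's computation of $H_1(T_3)$, Putman's theorem for subgroups containing $K_{3,n}$, and Raghunathan's vanishing theorem; then Simpson's theorem on rigid representations of K\"ahler groups produces a polarized $\Q$-VHS over $M$ whose monodromy is $\rho$. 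Without this rigidity-plus-Simpson step, your appeal to Hodge theory and to a quadratic presentation of $\u$ is unjustified, and the whole contradiction collapses.

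Two further points. First, your reduction from $\Gamma$ to $\G_{3,n}$ is flagged as needing care but never carried out; the required input is exactly Putman's theorem ($H_1(S;\Q)\cong H_1(T_{3,n};\Q)$ for finite index $S\supseteq K_{3,n}$), which via a cohomological criterion shows the relative completion of $\Gamma$ is \emph{isomorphic} to that of $\G_{3,n}$ --- this is cleaner and stronger than matching generators and low-weight relations by hand. Second, your parenthetical ``(equivalently of the Malcev completion of $T_{3,n}$)'' is a genuine error, not a shortcut: $\t_3$ and $\u_3$ differ by a central $\Q(1)$, and the failure of $\t_3$ to be quadratically presented does \emph{not} imply the same for $\u_3$. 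It is a priori possible that $\Gr^W_3 H^2(\t_3)$ equals exactly $V(\lambda_3)$, in which case $\u_3$ would be isomorphic to the (quadratically presented) Lie algebra $\p_7$ of a genus $7$ surface group and the argument for the mapping class group would fail even though the Torelli statement holds. Ruling this out --- by showing $\Gr^W_3 H^2(\t_3)$ contains $V(2\lambda_2+\lambda_3)+V(\lambda_1+\lambda_2)$ as well, via the commuting BSCC/bounding-pair computation together with Sakasai's calculation and Kabanov purity --- is precisely the heart of the matter, which you correctly identify but defer entirely.
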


Finite index subgroups of $\G_{g,n}$ that contain $K_{g,n}$ when $g\ge 3$
include fundamental groups of moduli spaces of curves with an abelian (i.e.,
standard) level structure; moduli spaces of curves with a Prym-level structure
as defined by Looijenga \cite{looijenga} and Boggi-Pikaart \cite{boggi-pikaart};
and fundamental groups of moduli spaces of curves with an arbitrary root of
their canonical bundle. 

An easier result is that genus 3 Torelli groups are not K\"ahler. As above, $n$
is a non-negative integer.

\begin{bigtheorem}
\label{thm:main_torelli}
No finite index subgroup of $T_{3,n}$ that contains $K_{3,n}$ can the be
fundamental group of a compact K\"ahler manifold.
\end{bigtheorem}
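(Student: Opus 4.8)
The plan is to exploit the classical Hodge-theoretic obstruction to Kählerness. By the formality theorem of Deligne, Griffiths, Morgan and Sullivan, the fundamental group $\Gamma$ of a compact Kähler manifold is $1$-formal: its Malcev (unipotent) Lie algebra $\mathfrak{L}(\Gamma)$ is isomorphic, as a complete filtered Lie algebra, to the degree completion of its associated graded. That associated graded is the rationalized lower-central-series Lie algebra $\Gr^\bullet(\Gamma)\otimes\Q$, and for a $1$-formal group it coincides with the \emph{holonomy Lie algebra}: the free Lie algebra on $H_1(\Gamma;\Q)$ modulo the ideal generated by the image of the dual cup product $H_2(\Gamma;\Q)\to\textstyle\bigwedge^2 H_1(\Gamma;\Q)$. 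In particular it is \emph{quadratically presented}. So it suffices to show that the Malcev Lie algebra of the relevant subgroup $G$ is not quadratically presented, i.e. that any minimal presentation of $\Gr^\bullet(G)\otimes\Q$ needs an essential relation in degree $\ge 3$. This is where the theorem is ``easier'' than its mapping class group analogue: because Torelli groups have rich nilpotent quotients, we may work with the plain Malcev completion rather than with a relative completion.

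The second step reduces the statement for an arbitrary finite index $G$ with $K_{3,n}\subseteq G\subseteq T_{3,n}$ to the single group $T_{3,n}$. The plan is to show that the inclusion $G\hookrightarrow T_{3,n}$ induces an isomorphism of Malcev Lie algebras, using the rational form of Stallings' theorem: a homomorphism inducing an isomorphism on $H_1(-;\Q)$ and a surjection on $H_2(-;\Q)$ induces an isomorphism on all rational lower central series quotients, hence on Malcev completions. The surjection on $H_2(-;\Q)$ is automatic from finiteness of the index, since the inclusion-induced map is rationally surjective (being split by the transfer). The isomorphism on $H_1(-;\Q)$ is exactly where the hypothesis $K_{3,n}\subseteq G$ enters: since $G/K_{3,n}$ is a finite index subgroup of the abelian group $T_{3,n}/K_{3,n}$ (the image of the Johnson homomorphism) one has $[G,G]\subseteq K_{3,n}$, and a commutator-calculus argument, replacing generators by suitable powers lying in $G$, gives $K_{3,n}\otimes\Q\subseteq[G,G]\otimes\Q$; thus no new rational $H_1$ classes appear and $H_1(G;\Q)=H_1(T_{3,n};\Q)$. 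Consequently $G$ is $1$-formal if and only if $T_{3,n}$ is, and it is enough to treat $T_{3,n}$ itself.

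The heart of the matter, and the step I expect to be the main obstacle, is the explicit computation of a minimal presentation of the Malcev Lie algebra $\mathfrak{t}_{3,n}$ of $T_{3,n}$ and the verification that it is not quadratic. I would organize everything $\Sp_6$-equivariantly: the degree $1$ generators form the Johnson representation $H_1(T_{3,n};\Q)$, and I would identify the space of quadratic relations with the dual of the cup-product image, decomposing it into irreducible $\Sp_6$-modules. The crucial point is to exhibit an $\Sp_6$-submodule in degree $3$ lying in the kernel of the map from the free Lie algebra that is \emph{not} generated by the degree $2$ relations — a genuine cubic relation. I expect this relation to reflect the special genus-$3$ behaviour of the Johnson homomorphism and of the bounding-twist generators of $K_{3,n}$, which is why the hypothesis on $K_{3,n}$ is natural and why genera $\ge 4$, where the analogous presentation is quadratic, differ. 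The representation-theoretic bookkeeping — computing $H_2$ of the Torelli Lie algebra and separating the degree-$3$ piece of the relations from the Lie bracket of the degree-$2$ relations — is the technically demanding part; I would expect the dependence on $n$ to be mild, the essential cubic relation already being present for $n=0$ and surviving the addition of punctures. Once one essential cubic relation is produced, non-quadraticity follows, and, since all modules are defined over $\Q$ the obstruction persists after extending scalars to $\R$, so the formality obstruction applies and the theorem follows.
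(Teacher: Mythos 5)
Your skeleton --- DGMS formality forces the Malcev Lie algebra of a K\"ahler group to be quadratically presented, so it suffices to reduce from $G$ to $T_{3,n}$ and then exhibit an essential cubic relation in $\t_{3,n}$ --- is exactly the paper's strategy. But two of your three steps have genuine gaps. The reduction step is the first. The statement that the inclusion induces an isomorphism $H_1(G;\Q)\to H_1(T_{3,n};\Q)$ is precisely Putman's theorem (\cite[Thm.~A]{putman}), which the paper quotes as a key external input; it does not follow from the commutator-calculus argument you sketch. What you need is that an element which is rationally a product of commutators in the ambient group is rationally a product of commutators in a finite-index subgroup containing it, and this is false in general, even with all the structure you invoke. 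Take $T$ free on $a,b$, let $K=[T,T]$ and $G=\ker(T\to\Z/2)$ with $a\mapsto 1$, $b\mapsto 0$. Then $T/K$ is abelian, $K\subseteq G$, and $[G,G]\subseteq K$; but $G$ is free of rank $3$ on $a^2,\, b,\, aba^{-1}$, and the class of $[a,b]\in K$ in $H_1(G;\Q)$ is $[aba^{-1}]-[b]\neq 0$. Replacing $a,b$ by powers lying in $G$ produces error terms lying deeper in the lower central series of $T$, and (as this example shows) their images in $H_1(G;\Q)$ need not shrink, so the induction never closes. Killing the BSCC twists rationally in $H_1(G)$ genuinely uses the structure of the Torelli group; you must either reproduce Putman's work or cite it.

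The second gap is that the heart of the proof --- non-quadraticity of $\t_3$ --- is left as a plan (``exhibit a genuine cubic relation''), with no mechanism identified, so as written the proposal does not prove the theorem. The mechanism the paper uses is softer than the full equivariant presentation you envisage, and it is worth recording: (a) in genus $3$, $\Lambda^2 V(\lambda_3)\cong V(0)\oplus V(2\lambda_2)$, which is exactly $\Gr^W_{-2}\t_3$, so the bracket $\Lambda^2\Gr^W_{-1}\t_3\to\Gr^W_{-2}\t_3$ is an isomorphism and there are \emph{no} quadratic relations at all (equivalently, the cup product $\Lambda^2 H^1(T_3;\Q)\to H^2(T_3;\Q)$ vanishes); (b) the copy of the trivial representation $\Q\psi\subset\Gr^W_{-2}\t_3$, which is the image of the central $\Q(1)$ in the extension $0\to\Q(1)\to\t_3\to\u_3\to 0$ of Theorem~\ref{thm:cent_ext}, is central, so $[\psi,\Gr^W_{-1}\t_3]$ is a copy of $V(\lambda_3)$ inside the degree-$3$ part of the relation ideal; (c) since there are no quadratic relations, these cubic relations cannot be consequences of lower-degree ones, so $\Gr^W_3 H^2(\t_3)\supseteq V(\lambda_3)\neq 0$ and $\t_3$ is not quadratically presented by Lemma~\ref{lem:quad_pres} and Proposition~\ref{prop:quad_equiv}. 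Note that identifying $\Gr^W_{-1}\t_3\cong V(\lambda_3)$ and $\Gr^W_{-2}\t_3\cong V(2\lambda_2)\oplus V(0)$ requires Johnson's computation of $H_1(T_3)$, Theorem~\ref{thm:cent_ext}, and the Hodge-theoretic identification of the weight filtration with the lower central series --- none of which is supplied by formality alone. By contrast, Sakasai's computation and the commuting-pair analysis of Section~\ref{sec:commuting} are needed only for the full presentation (Theorem~\ref{thm:torelli3}), not for this theorem; your plan conflates the two.
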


To prove the second result, we prove that the unipotent (aka Malcev) completion
of each genus 3 Torelli group is not quadratically presented.\footnote{This
result was stated informally in \cite[Rem.~10.4]{hain:torelli}.} One then
concludes that these groups are not K\"ahler by applying the well known result
of Deligne, Griffiths, Morgan and Sullivan \cite{dgms}. This computation is
extended to finite index subgroups of genus 3 Torelli groups that contain the
Johnson subgroups using recent work of Andrew Putman \cite{putman}.

The proof of Theorem~\ref{thm:main_mcg} is more involved. The first step is to
compute an explicit presentation of the unipotent radical $\u_3$ of the relative
completion of the genus $3$ mapping class group $\G_3$. This we deduce from an
explicit presentation of the unipotent completion of the genus $3$ Torelli group
in Section~\ref{sec:pres_3} using computations from Section~\ref{sec:commuting}
and the work of Sakasai \cite{sakasai}. Dennis Johnson's work \cite{johnson:h1}
implies that the standard representation $\rho : \G_3 \to \Sp_3(\C)$ is rigid. 
Results of Carlos Simpson \cite{simpson} then imply that if $\G_3$ is the
fundamental group of a compact K\"ahler manifold $X$, then $\rho$ is the
monodromy representation of a polarized variation of Hodge structure over $X$. A
generalization of the Deligne, Griffiths, Morgan, Sullivan result to relative
completion, proved in \cite{hain:malcev}, implies that if $\G_3$ is K\"ahler,
then $\u_3$ would be quadratically presented. Since it is not, $\G_3$ is not
K\"ahler.

This paper may be regarded as a sequel to \cite{hain:torelli} where explicit
presentations of the unipotent completion of Torelli groups and the unipotent
radical of relative completions of mapping class groups were given for all $g\ge
6$ and partial presentations were derived when $3\le g < 6$. Here we complete
the story by computing explicit presentations when $3 \le g < 6$.

Before stating the quadratic presentations we need to fix notation. Denote the
first rational homology $H_1(S,\Q)$ of the reference surface $S$ by $H$. The
symplectic group $\Sp_g$ of rank $g$, also denoted $\Sp(H)$, is the group of
automorphisms of $H$ that preserve the intersection pairing. The third
fundamental representation of $\Sp(H)$ is $\Lambda^3_0 H := (\Lambda^3 H)/H$. It
plays a distinguished role as the Johnson homomorphism induces a canonical
$\Sp_g(\Z)$-equivariant isomorphism $H_1(T_g;\Q) \cong \Lambda^3_0 H$.

The free Lie algebra generated by the vector space $V$ will be denoted $\L(V)$.
It is graded; $\L_n(V)$ will denote its space of degree $n$ elements. Denote the
lower central series of a Lie algebra $\g$ by
$$
\g = L^1 \g \supseteq L^2 \g \supseteq L^3 \g \supseteq \cdots
$$
and the associated graded Lie algebra by $\Gr^\dot_\LCS\g$.

The Lie algebra of the unipotent completion of $T_g$ will be denoted by $\t_g$
and the Lie algebra of the prounipotent radical of the completion of the mapping
class group $\G_g$ with respect to the homomorphism $\G_g \to \Sp_g$ will be
denoted by $\u_g$.

The following Theorem is proved in Section~\ref{sec:quad_relns}. The main
ingredients in the proof are Hodge theory, which implies that each of these Lie
algebras is isomorphic to the degree completion of the associated graded Lie
algebra of its lower central series; Kabanov purity \cite{kabanov:purity} (see
also Section~\ref{sec:kabanov}), which implies that the degrees of relations in
a minimal presentation of these Lie algebras is $\le 3$; and computations of
Sakasai \cite{sakasai}, which extend computations from \cite{hain:torelli}. The
result when $g\ge 6$ was previously proved in \cite{hain:torelli}.

\begin{bigtheorem}
\label{thm:torelli}
If $g\ge 4$, then
$$
\t_g \cong \prod_{n\ge 1} \Gr^n_\LCS \t_g.
$$
and $\Gr^n_\LCS \t_g$ has the quadratic presentation
$$
\Gr^\dot_\LCS \t_g \cong \L(\Lambda^3_0 H)/(R)
$$
in the category of graded Lie algebras in the category of $\Sp_g$-modules, where
$R$ is the $\Sp(H)$ complement of the unique copy of $V(2\lambda_2)$ in
$\L_2(\Lambda^3_0 H)$. Similarly, $\u_g$ is isomorphic to the degree completion
of the graded Lie algebra associated to its lower central series. It is has
presentation
$$
\Gr^\dot_\LCS \u_g \cong \L(\Lambda^3_0 H)/(R+V(0))
$$
in the category of $\Sp_g$-modules, where $V(0)$ is the unique copy of the
trivial representation in $\L_2(\Lambda^3_0 H)$.
\end{bigtheorem}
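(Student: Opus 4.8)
The plan is to reduce the entire statement to the associated graded Lie algebras $\Gr^\dot_\LCS\t_g$ and $\Gr^\dot_\LCS\u_g$ and then to present these as explicit quotients of the free Lie algebra $\L(\Lambda^3_0 H)$. The first task is the two asserted isomorphisms $\t_g \cong \prod_{n\ge 1}\Gr^n_\LCS\t_g$ and its analogue for $\u_g$. Here I would invoke Hodge theory as in \cite{hain:malcev}: each of $\t_g$ and $\u_g$ underlies a pro-mixed Hodge structure whose weight filtration coincides, up to reindexing, with the lower central series filtration. Because a Lie algebra in the relevant category of mixed Hodge structures whose weight and lower central series filtrations agree is isomorphic to the degree completion of its associated graded, this yields both splitting statements at once and reduces the theorem to a computation of graded Lie algebras in the category of $\Sp_g$-modules.

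Having reduced to the associated graded, the next step records the generators and bounds the relations. The Johnson homomorphism identifies $\Gr^1_\LCS\t_g = H_1(T_g;\Q)\cong \Lambda^3_0 H$, and the same space generates $\Gr^\dot_\LCS\u_g$, so in each case there is a canonical $\Sp_g$-equivariant surjection $\L(\Lambda^3_0 H)\twoheadrightarrow\Gr^\dot_\LCS\t_g$ (resp. onto $\Gr^\dot_\LCS\u_g$), whose kernel is the ideal of relations. By Kabanov purity (Section~\ref{sec:kabanov}) a minimal generating set for this ideal lies in degrees $\le 3$. Thus it suffices to determine the degree-$2$ relations and to prove that no degree-$3$ relations are needed.

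For the quadratic relations I would decompose $\L_2(\Lambda^3_0 H)=\Lambda^2(\Lambda^3_0 H)$ into $\Sp_g$-irreducibles and compute the image of the bracket $\Lambda^2(\Lambda^3_0 H)\to\Gr^2_\LCS\t_g$; the module $R$ is then the $\Sp(H)$-complement of this image. Matching with the stable computation of \cite{hain:torelli} and Sakasai's tables \cite{sakasai} pins $R$ down as the complement of the copy of $V(2\lambda_2)$, with the copy of the trivial representation $V(0)$ surviving in $\Gr^2_\LCS\t_g$. Passing from the Torelli completion to the relative completion of $\G_g$ imposes the single additional relation $V(0)$ — so that $\t_g$ is a central extension of $\u_g$ — which accounts for the presentation $\L(\Lambda^3_0 H)/(R+V(0))$; this is where the comparison between the two completions enters.

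The crux, and the genuinely new content here, is ruling out cubic relations for $4\le g<6$, the range $g\ge 6$ being settled in \cite{hain:torelli}. Writing $\mathfrak{q}_g:=\L(\Lambda^3_0 H)/(R)$ for the quadratically presented algebra, there is a surjection $\mathfrak{q}_g\twoheadrightarrow\Gr^\dot_\LCS\t_g$ which agrees in degrees $\le 2$ by construction; by the degree bound above its kernel is generated in degree $3$, so the map is an isomorphism as soon as $(\mathfrak{q}_g)_3$ and $\Gr^3_\LCS\t_g$ agree as $\Sp_g$-modules. I would compute $(\mathfrak{q}_g)_3$ directly, decomposing $\L_3(\Lambda^3_0 H)$ and subtracting the submodule $[R,\Lambda^3_0 H]$, and compare with Sakasai's computation of $\Gr^3_\LCS\t_g$ \cite{sakasai}. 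The main obstacle is precisely this comparison in the unstable range: for $g=4,5$ the $\Sp_g$-decompositions of $\Lambda^2$ and $\Lambda^3$ of $\Lambda^3_0 H$ differ from their stable values, so the plethysms must be carried out genus by genus and checked against Sakasai's tables. Once the two degree-$3$ modules coincide, no cubic relation can occur and the presentation of $\t_g$ is quadratic; the statement for $\u_g$ follows by carrying the same degree-$3$ comparison through the additional relation $V(0)$.
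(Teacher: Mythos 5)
Your overall architecture matches the paper's: reduce to the associated graded via the splitting of mixed Hodge structures and the identification of the weight filtration with the lower central series, use the Johnson homomorphism for generators, Kabanov purity to bound the relations in degree $\le 3$, and the decomposition of $\Lambda^2(\Lambda^3_0 H)$ together with $\Gr^W_{-2}\u_g\cong V(2\lambda_2)$ for the quadratic relations. (One small wording issue: $R$ must be the complement of $V(2\lambda_2)+V(0)$ in $\L_2(\Lambda^3_0 H)$, so that $V(0)$ survives in $\Gr^2_\LCS\t_g$ and is then killed in $\u_g$; your sentence, like the theorem statement itself, blurs this.)

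The gap is in your treatment of the cubic relations, which is the one genuinely new step for $g=4,5$. You propose to compute $(\mathfrak{q}_g)_3=\L_3(\Lambda^3_0 H)/[\Lambda^3_0 H,R]$ and compare it with ``Sakasai's computation of $\Gr^3_\LCS\t_g$.'' But Sakasai does not compute $\Gr^3_\LCS\t_g$; what he computes (Proposition~\ref{prop:sakasai}) is the cokernel of the Jacobi identity map $J:\Lambda^3\Gr^W_{-1}\t_g\to\Gr^W_{-1}\t_g\otimes\Gr^W_{-2}\t_g$, which only gives a \emph{surjection} $\coker J\twoheadrightarrow\Gr^W_{-3}\t_g$ with kernel $\Gr^W_{-3}H_2(\t_g)$, i.e.\ an upper bound. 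For $g\ge 4$ one has $\coker J\cong V(2\lambda_1+\lambda_3)$, irreducible, so a priori either $\Gr^W_{-3}\t_g\cong V(2\lambda_1+\lambda_3)$ with no cubic relations, or $\Gr^W_{-3}\t_g=0$ with a cubic relation isomorphic to $V(2\lambda_1+\lambda_3)$. Your proposal contains no mechanism to exclude the second alternative. The paper closes this with a lower bound coming from the outer action on $\p$: since $V(3\lambda_1)$ does not occur in $\L_3(V(\lambda_3))$, the image of $\Gr^W_{-3}\t_g$ in $\Gr^W_{-3}\OutDer\p\cong V(2\lambda_1+\lambda_3)+V(3\lambda_1)$ is either $V(2\lambda_1+\lambda_3)$ or trivial, and triviality would force the image of $\t_g$ in $\OutDer\p$ to be nilpotent, which it is not. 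Hence $\Gr^W_{-3}\t_g\cong V(2\lambda_1+\lambda_3)$, $\Gr^W_{-3}H_2(\t_g)=0$, and the Gysin sequence transfers this to $\u_g$.

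Once that ingredient is added, your detour through the plethysm $(\mathfrak{q}_g)_3$ becomes unnecessary: the exact sequence $0\to\Gr^W_{-3}H_2(\t_g)\to\coker J\to\Gr^W_{-3}\t_g\to 0$ already identifies the minimal cubic relations without ever decomposing $\L_3(\Lambda^3_0 H)$ or the ideal generated by $R$. I would recommend replacing that computation by the Chevalley--Eilenberg argument above.
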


From this one can deduce presentations of all $\t_{g,n}$ and $\u_{g,n}$ --- the
Lie algebras of the unipotent completion of $T_{g,n}$ and of the prounipotent
radical of the relative completion of $\G_{g,n}$ --- for all $n > 0$ when $g\ge
4$. These are also  quadratically presented. This result allows Theorem B of
\cite{dhp} to be extended from $g\ge 6$ to all $g\ge 4$.

The derivation of the genus 3 presentations is more difficult and involves the
computation of the cubic relations generated by a commuting pair consisting of a
Dehn twist on a bounding simple closed curve and a bounding pair map. These
computations are carried out in Section~\ref{sec:commuting} and also use
Sakasai's computations \cite{sakasai}. As in higher genus case, the proof uses
Hodge theory and Kabanov purity.

\begin{bigtheorem}
\label{thm:torelli3}
In genus $3$, the Lie algebras $\t_3$ and $\u_3$ are isomorphic to the degree
completions of the graded Lie algebras associated to their lower central series:
$$
\t_3 \cong \prod_{n\ge 1} \Gr^n_\LCS \t_3 \text{ and }
\u_3 \cong \prod_{n\ge 1} \Gr^n_\LCS \u_3
$$
The graded Lie algebra $\Gr^n_\LCS \t_3$ has the cubic presentation
$$
\Gr^n_\LCS \t_3 \cong \L(\Lambda^3_0 H)/(R)
$$
in the category of graded Lie algebras in the category of $\Sp_3$-modules, where
$R$ is the $\Sp(H)$ complement of the unique copy of $V(2\lambda_1+\lambda_3)$
in $\L_3(\Lambda^3_0 H)$. The Lie algebra $\Gr^\dot_\LCS\u_3$ has presentation
$$
\Gr^n_\LCS \u_3 \cong \L(\Lambda^3_0 H)/(R+V(0))
$$
in the category of $\Sp_3$-modules, where $V(0)$ is the unique copy of the
trivial representation in $\L_2(\Lambda^3_0 H)$.
\end{bigtheorem}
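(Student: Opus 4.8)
The plan is to exhibit $\Gr^\dot_\LCS \t_3$ and $\Gr^\dot_\LCS \u_3$ as quotients of the free Lie algebra $\L(\Lambda^3_0 H)$ in the category of graded $\Sp_3$-modules, and then to pin down the relation ideals degree by degree. The two isomorphisms $\t_3 \cong \prod_{n\ge 1}\Gr^n_\LCS \t_3$ and $\u_3 \cong \prod_{n\ge 1}\Gr^n_\LCS \u_3$ are the Hodge-theoretic input: both Lie algebras carry functorial mixed Hodge structures whose weight filtration is a reindexing of the lower central series, so that each graded quotient $\Gr^n_\LCS$ is pure; the canonical splitting of the weight filtration (as in \cite{hain:malcev}) then identifies each completed Lie algebra with the degree completion of its associated graded, reducing the problem to a graded computation. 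Since Johnson's homomorphism identifies $H_1(\t_3) \cong H_1(\u_3) \cong \Lambda^3_0 H$, concentrated in degree $1$, there are canonical $\Sp_3$-equivariant surjections $\L(\Lambda^3_0 H)\twoheadrightarrow \Gr^\dot_\LCS \t_3$ and $\L(\Lambda^3_0 H)\twoheadrightarrow \Gr^\dot_\LCS \u_3$, and the remaining task is to compute their kernels.

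Next I would bound the degrees of relations by invoking Kabanov purity (Section~\ref{sec:kabanov}): purity of the relevant weight-graded piece of the second cohomology forces a minimal relation module to be concentrated in degrees $2$ and $3$, so it suffices to determine the quadratic and cubic relation submodules $R_2\subseteq \L_2(\Lambda^3_0 H)$ and $R_3\subseteq \L_3(\Lambda^3_0 H)$. In degree $2$ the decomposition $\L_2(\Lambda^3_0 H)=\Lambda^2(\Lambda^3_0 H)\cong V(2\lambda_2)\oplus V(0)$ of $\Sp_3$-modules reduces the question to whether the single copies of $V(2\lambda_2)$ and of the trivial module survive. This is precisely where genus $3$ parts company with the higher genus case of Theorem~\ref{thm:torelli}: using the computations of Sakasai \cite{sakasai} I would check that the bracket $\Lambda^2(\Lambda^3_0 H)\to \Gr^2_\LCS \t_3$ is injective, so that $R_2=0$ for $\t_3$, whereas the passage from the Torelli completion to the relative completion kills the copy of the trivial module, giving $R_2=V(0)$ for $\u_3$. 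Thus the two presentations already differ only by the quadratic relation $V(0)$, and the nonexistence of quadratic relations for $\t_3$ is what makes its presentation genuinely cubic.

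The crux, and the step I expect to be the main obstacle, is the determination of the cubic relation module $R=R_3$. The concrete input is a commuting pair consisting of a Dehn twist on a bounding simple closed curve and a bounding pair map: the vanishing of their bracket in the Torelli Lie algebra yields, after expansion along the lower central series, a family of cubic relations. Decomposing the $\Sp_3$-span of these relations --- the computation carried out in Section~\ref{sec:commuting}, again resting on Sakasai \cite{sakasai} --- should provide the lower bound, producing enough relations to cut $\L_3(\Lambda^3_0 H)$ down to at most the copy of $V(2\lambda_1+\lambda_3)$. For the matching upper bound I would verify that this copy genuinely survives in $\Gr^3_\LCS \t_3$, so that the relation module is no larger; the purity bound of the previous paragraph guarantees there is nothing left to kill in higher degree, and strictness of morphisms of mixed Hodge structure guarantees that the surviving copy is an $\Sp_3$-summand. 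The delicate point is the multiplicity bookkeeping inside $\L_3(\Lambda^3_0 H)$: one must confirm that $V(2\lambda_1+\lambda_3)$ occurs there with multiplicity one and that the commuting-pair relations exhaust its entire $\Sp(H)$-complement, with no accidental coincidences among the isotypic pieces.

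Finally I would assemble the presentations. For $\t_3$ the kernel of $\L(\Lambda^3_0 H)\twoheadrightarrow \Gr^\dot_\LCS \t_3$ is the ideal generated by the cubic complement $R$ of $V(2\lambda_1+\lambda_3)$, giving $\Gr^\dot_\LCS \t_3\cong \L(\Lambda^3_0 H)/(R)$. For $\u_3$ one adjoins the single quadratic relation $V(0)$ found in the second step; since its degree-$3$ consequences $[\Lambda^3_0 H,V(0)]$ already lie in $R$, the effect is confined to degree $2$ and one obtains $\Gr^\dot_\LCS \u_3\cong \L(\Lambda^3_0 H)/(R+V(0))$. Transporting these graded presentations back across the splitting of the first step yields the stated presentations of the completed Lie algebras $\t_3$ and $\u_3$, completing the proof.
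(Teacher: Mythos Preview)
Your outline matches the paper's approach closely: the Hodge-theoretic splitting of the weight filtration, Kabanov purity to bound relation degrees by $3$, the degree-$2$ analysis, Sakasai's cokernel computation (Proposition~\ref{prop:sakasai}) in degree $3$, and the commuting BSCC/BP pair of Section~\ref{sec:commuting} to produce enough cubic relations. Two points deserve sharpening.

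First, attributions and the transfer of purity. The degree-$2$ statement that the bracket $\Lambda^2(\Lambda^3_0 H)\to \Gr^W_{-2}\t_3$ is an isomorphism (so $R_2=0$ for $\t_3$) is not Sakasai's; it is Proposition~\ref{prop:wt_2} and Corollary~\ref{cor:cup_prod}, going back to \cite{hain:torelli}. Also, Kabanov purity (Theorem~\ref{thm:kabanov}) is stated for $\u_g$, not $\t_g$; to conclude that $H^2(\t_3)$ has no weight $>3$ the paper passes through the Gysin sequence of the central extension $0\to\Q(1)\to\t_3\to\u_3\to 0$ (Corollaries~\ref{cor:gysin} and \ref{cor:sakasai}(ii)). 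You should make this step explicit rather than invoking purity for $\t_3$ directly.

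Second, and more substantively, you leave the ``survival'' of $V(2\lambda_1+\lambda_3)$ in $\Gr^W_{-3}\t_3$ as something to be verified without naming a tool. Sakasai's computation plus the commuting-pair relations only shows that $\Gr^W_{-3}\t_3$ is a \emph{quotient} of $V(2\lambda_1+\lambda_3)$; it could a priori be zero. The paper's mechanism is the outer action $\u_3\to\OutDer\p$ on the Malcev Lie algebra of the surface group: by Corollary~\ref{cor:outder} one has $\Gr^W_{-3}\OutDer\p \cong V(2\lambda_1+\lambda_3)\oplus V(3\lambda_1)$, and since $V(3\lambda_1)$ does not occur in $\L_3(\Lambda^3_0 H)$ the image of $\Gr^W_{-3}\u_3$ is either $V(2\lambda_1+\lambda_3)$ or trivial. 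Triviality is ruled out because it would force the image of $\t_3$ in $\OutDer\p$ to be nilpotent, which it is not. This is the missing ingredient in your upper-bound step; once you insert it, your argument is exactly the paper's.
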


One consequence of the computations is that the cup product $\Lambda^2
H^1(T_3;\Q) \to H^2(T_3;\Q)$ vanishes, which implies that there is a
well-defined  Massey triple product map
$$
H^1(T_3;\Q)^{\otimes 3} \to H^2(T_3;\Q).
$$
The computation of the cubic relations in $\t_3$ is dual to the computation
of the Massey triple products.

\begin{bigtheorem}
The image of the Massey triple product map is an $\Sp_3(\Z)$ submodule
isomorphic to the restriction of the $\Sp_3$-module $V(2\lambda_2+\lambda_3) +
V(\lambda_1+\lambda_2) + V(\lambda_3)$ to $\Sp_3(\Z)$.
\end{bigtheorem}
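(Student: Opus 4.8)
Denote the Massey triple product map by $\mu\colon H^1(T_3;\Q)^{\otimes 3}\to H^2(T_3;\Q)$. The plan is to identify $\mu$ with the transpose of the inclusion of the cubic relations of $\t_3$ and then to read the answer off from Theorem~\ref{thm:torelli3}. Put $V=\Lambda^3_0 H$; the Johnson homomorphism gives $H_1(T_3;\Q)\cong V$, hence $H^1(T_3;\Q)\cong V^*$, and since every finite-dimensional rational $\Sp_3$-module is self-dual it suffices to compute the image as an $\Sp_3$-module up to isomorphism and then restrict to $\Sp_3(\Z)$. First I would note that $\mu$ is genuinely defined: Theorem~\ref{thm:torelli3} gives $R\subseteq\L_3(V)$, so $\Gr^2_\LCS\t_3=\L_2(V)$ and there are no quadratic relations; equivalently the cup product $\Lambda^2 H^1(T_3;\Q)\to H^2(T_3;\Q)$ vanishes. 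Hence every triple Massey product $\langle a,b,c\rangle$ is defined with vanishing indeterminacy $a\cup H^1(T_3;\Q)+H^1(T_3;\Q)\cup c$, and $\mu$ is a well-defined $\Sp_3$-equivariant map.

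The crux is the duality between triple Massey products and cubic relations. Using that $\t_3$ is the degree completion of $\Gr^\bullet_\LCS\t_3$ (Theorem~\ref{thm:torelli3}), the Hodge-theoretic weight grading splits $H^2(T_3;\Q)$ so that cup products live in weight $2$ and triple Massey products in weight $3$. The minimal-model description of the unipotent and relative completions in \cite{hain:malcev} (in the spirit of \cite{dgms}; compare \cite{hain:torelli}) should then identify $\mu$, via the cochain formula for $\langle a,b,c\rangle$ obtained by choosing primitives for the vanishing cup products, with the transpose of the inclusion of the space $R_3$ of cubic relations
$$R_3\hookrightarrow \L_3(V)\hookrightarrow V^{\otimes 3}.$$
As this inclusion is injective, its transpose is surjective, so $\im\mu\cong R_3^*$ as $\Sp_3$-modules. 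Pinning this identification down — in particular that the triple Massey products fill out all of the weight-$3$ part of $H^2(\t_3)$ and that no higher-weight classes intrude — is the main obstacle; this is exactly where the Hodge theory and Kabanov purity of Section~\ref{sec:kabanov} do their work, and it is the precise meaning of the assertion that the cubic relations are dual to the Massey products.

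It remains to decompose $R_3$. A finite \LiE\ computation shows that $\L_3(\Lambda^3_0 H)$ is multiplicity free, with exactly four constituents,
$$\L_3(\Lambda^3_0 H)\cong V(2\lambda_1+\lambda_3)\oplus V(2\lambda_2+\lambda_3)\oplus V(\lambda_1+\lambda_2)\oplus V(\lambda_3),$$
consistent with the dimension count $216+616+64+14=910=\dim\L_3(\Lambda^3_0 H)$. By Theorem~\ref{thm:torelli3} (which rests on Sakasai's computations \cite{sakasai}), $R_3$ is the $\Sp_3$-complement of the copy of $V(2\lambda_1+\lambda_3)$, so $R_3\cong V(2\lambda_2+\lambda_3)\oplus V(\lambda_1+\lambda_2)\oplus V(\lambda_3)$. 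By self-duality of $\Sp_3$-representations $\im\mu\cong R_3$, and since each constituent is the restriction of a rational $\Sp_3$-module to the Zariski-dense subgroup $\Sp_3(\Z)$, the $\Sp_3$-isomorphism restricts to the claimed $\Sp_3(\Z)$-isomorphism. The conceptual work is thus entirely in the second paragraph; granting the duality, the theorem is the dual restatement of the cubic part of Theorem~\ref{thm:torelli3}.
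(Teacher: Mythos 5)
Your proposal is correct and follows essentially the same route as the paper: the paper likewise deduces the result from the duality between the weight-$3$ Chevalley--Eilenberg cochains of $\t_3$ and the cubic relations, the purity of $H^2(\t_3)$ in weight $3$ (Corollary~\ref{cor:sakasai}), the decomposition $\Gr^W_3H^2(\t_3)\cong V(2\lambda_2+\lambda_3)+V(\lambda_1+\lambda_2)+V(\lambda_3)$ of Proposition~\ref{prop:h2}, and the transfer to $T_3$ via the $\Sp_3$-equivariant, Massey-product-preserving map $H^\dot(\t_3)\to H^\dot(T_3)$, which is an isomorphism in degree $1$ and injective in degree $2$. The step you flag as the main obstacle is handled in the paper exactly as you anticipate, so no substantive gap remains.
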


A word about exposition: I could have written a shorter paper. However, in the
interests of making the proofs of the main results comprehensible and the paper
reasonably coherent, I have included expository material, especially where I
felt I could improve on earlier expositions. In the final section I speculate on
how the problem of deciding whether higher genus mapping class groups are
K\"ahler might be related to the problem of understanding the topology of
complete subvarieties of moduli spaces of curves. In particular, I conjecture
that the fundamental group of a smooth complete subvariety of $\M_g$ cannot be a
finite index subgroup of $\G_g$.

\bigskip

\noindent{\em Acknowledgments:} I am grateful to Carlos Simpson for helpful
discussions related to his work. I am also indebted to the referee for his/her
careful reading of the manuscript and for pointing out numerous typos in the
original manuscript.

\section{Preliminaries}

This section is included for clarity. In it we introduce some notation and
conventions, and recall several definitions.  Readers should skip this section
and refer to it as needed.

\subsection{Filtrations}

Increasing filtrations
$$
0 \subseteq \cdots \subseteq F_m V \subseteq F_{m+1} V \subseteq
\cdots \subseteq V
$$
of $V$ (a vector space, group, etc) will be denoted with a lower index,
$F_\dot V$. Decreasing filtrations
$$
U \supseteq \cdots \supseteq F^m U \supseteq F^{m+1}U \supseteq
\cdots\supseteq 0
$$
of $U$ (a vector space, a group, etc) will be denoted with an upper index,
$F^\dot U$. The notation for the graded quotients of $F_\dot V$ and $F^\dot U$
is
$$
\Gr_m^F V := F_m V/F_{m-1}V\text{ and } \Gr^m_F U := F^m U/F^{m+1}U.
$$

The lower central series of a Lie algebra (or group) $\g$ will be denoted by
$L^\dot \g$.\footnote{When $\g$ is a topological Lie algebra (or group), $L^m\g$
is defined inductively to be the closure of $[\g,L^{m-1}\g]$.} It is indexed so
that $\g = L^1\g$. With this convention the bracket preserves $L^\dot$:
$$
[L^m\g,L^n\g] \subseteq L^{m+n}\g.
$$

\subsection{Proalgebraic groups}

Suppose that $F$ is a field of characteristic zero. An affine proalgebraic group
$G$ over $F$ is an inverse limit of affine algebraic $F$-groups $G_\a$. The
coordinate ring $\cO(G)$ of $G$ the direct limit of the coordinate rings of the
$G_\a$. The Lie algebra $\g$ of $G$ is the inverse limit of the Lie algebras
$\g_\a$ of the $G_\a$.  It is a Hausdorff topological Lie algebra. The
neighbourhoods of $0$ are the kernels of the canonical projections $\g \to
\g_\a$.

The continuous cohomology of $\g=\varprojlim \g_\a$ is defined by
$$
H^\dot(\g) := \varinjlim_\a H^\dot(\g_\a).
$$
Its homology is the full dual:
$$
H_\dot(\g) := \Hom_F(H^\dot(\g),F) \cong \varprojlim H_\dot(\g_\a)
$$
Each homology group is a Hausdorff topological vector space.

Continuous cohomology can be computed using continuous Chevalley-Eilenberg
cochains:
$$
C^\dot(\g) := \varinjlim_\a \Hom_F(\Lambda^\dot\g_\a,F)
$$
with the usual differential.

If, instead, $\g=\bigoplus_m \g_m$ is a graded Lie algebra, then the homology
and cohomology of $\g$ are also graded. This follows from the fact that the
grading of $\g$ induces a grading of the Chevalley-Eilenberg chains and cochains
of $\g$. 

\subsection{Prounipotent groups and pronilpotent Lie algebras}

A prounipotent $F$-group is a proalgebraic group that is an inverse limit of
unipotent $F$-groups.

A pronilpotent Lie algebra over a $F$ is an inverse limit of finite dimensional
nilpotent Lie algebras. The Lie algebra of a prounipotent groups is a
pronilpotent Lie algebra. The functor that takes a prounipotent group to its Lie
algebra is an equivalence of categories between the category of unipotent
$F$-groups and the category of pronilpotent Lie algebras over $F$. 

\subsection{Hodge theory}

The reader is assumed to be familiar with the basic properties of mixed Hodge
structures (MHSs), which can be found in \cite{deligne:hodge2}. One property
that we will exploit is that there is a natural (though not canonical)
isomorphism
$$
V_\C \cong \bigoplus_{m\in \Z} \Gr^W_\dot V_\C
$$
of the complex part $V_\C$ of a MHS $V$ with the direct sum its weight graded
quotients. These isomorphisms are compatible with tensor products and duals.

A Lie algebra in the category of (rational or real) MHS is a finite dimensional
Lie algebra endowed with a MHS with the property that the bracket is a morphism
of MHS. The cohomology of an inverse limit $\varprojlim \g_\a$ of Lie algebras
in the category of MHS is an Ind-MHS --- that is, an ind-object of the category
of MHS.

If the weight graded quotients of $\g$ are finite dimensional, then the
exactness of the functor $\Gr^W_\dot$ on the category of MHS implies that there
are natural isomorphisms
$$
\Gr^W_\dot H_\dot(\g) \cong H_\dot(\Gr^W_\dot \g)
\text{ and }
\Gr^W_\dot H^\dot(\g) \cong H^\dot(\Gr^W_\dot\g).
$$

\section{Presentations of Pronilpotent Lie Algebras}

Here we recall how each pronilpotent and each positively graded Lie algebra $\n$
has a ``minimal presentation'' with generating set isomorphic to $H_1(\n)$ and
relations isomorphic to $H_2(\n)$. Denote the free Lie algebra (over $F$)
generated by a vector space $V$ by $\L(V)$. This is a graded Lie algebra: degree
$m$ component $\L_m(V)$ consists of the homogeneous Lie words of degree $m$. The
$m$th term of its lower central series is
$$
L^m\L(V) = \bigoplus_{r\ge m} \L_r(V).
$$
The lower central series filtration defines a topology on $\L(V)$. Its
completion in this topology is the degree completion of $\L(V)$:
$$
\L(V)^\wedge = \varprojlim_m \L(V)/L^m \cong \prod_{m\ge 1} \L_m(V).
$$ 
When $V$ is finite dimensional, $\L(V)^\wedge$ is pronilpotent.

Suppose that $\n$ is a pronilpotent Lie algebra. For simplicity, we assume that
$H_1(\n)$ is finite dimensional. Each choice of a continuous splitting of the
canonical surjection $\n \to H_1(\n)$ induces a continuous surjection
$$
\L(H_1(\n))^\wedge \to \n.
$$
The ideal of relations $\r$ is the kernel of this homomorphism. Note that $\r$
is contained in $L^2\L(V)^\wedge$. By the Lie algebra analogue of a theorem
of Hopf (cf.\ \cite[Prop.~5.6]{hain:torelli}) there is a natural isomorphism
\begin{equation}
\label{eqn:hopf}
H_2(\n) \cong \r/[\r,\L(V)^\wedge].
\end{equation}
The image of any continuous section $\psi$ of $\r \to H_2(\n)$ is a minimal set
of relations:
$$
\n \cong \L(H_1(\n))^\wedge/(\im \psi).
$$

Similarly, if $\n = \bigoplus_{m\ge 1}\n_m$ is a graded Lie algebra that is
generated by $\n_1 \cong H_1(\n)$, it has a presentation of the form
$$
\n \cong \L(H_1(\n))/(\im \psi)
$$
where $\psi : H_2(\n) \to \L(H_1(\n))$ is an injective graded linear mapping.
Observe that a minimal space of relations of degree $n$ is the image of
$$
\psi_n : H_2(\n)_n \to \L_n(H_1(\n)).
$$

\subsection{Chains}

If $\n=\bigoplus_{m\ge 1}\n_m$ is a graded Lie algebra then its
Chevalley-Eilenberg complex is graded. The rows of the diagram
$$
\xymatrix@R=10pt{
& 0 \ar[r] & \Lambda^2 \n_1 \ar[r]^(.55){[\blank,\blank]} & \n_2 \ar[r] & 0 \cr
0 \ar[r] & \Lambda^3 \n_1 \ar[r]^(.45)J & \n_1\otimes \n_2
\ar[r]^(.55){[\blank,\blank]} & \n_3 \ar[r] & 0 
}
$$
are the weight\footnote{Here we use the word ``weight'' to refer the grading
index. Later this index will be the Hodge theoretic weight, which will be
negative.} 2 and 3 components of the Chevalley-Eilenberg chains of $\n$. Here
$J$ is the ``Jacobi identity'' map
\begin{equation}
\label{eqn:jacobi}
J(x\wedge y \wedge z) = x\otimes [y,z] + y\otimes [z,x] + z\otimes [x,y].
\end{equation}
The homology of the $j$th row ($j=1,2$) is the weight $j$ summand $H_\dot(\n)_j$
of $H_\dot(\n)$. The dual complexes compute the weight $j$ summands of
$H^\dot(\n)$. Observe that the subalgebra
$$
\bigoplus_{m\ge 0} H^m(\n)_m
$$
is generated by $H^1(\n)_1 = \Hom(\n_1,F)$. If $\Lambda^2 \n_1 \to \n_2$ is
surjective, then $H^2(\n)_2 = 0$, which implies that $H^m(\n)_m=0$ for all
$m\ge 2$.

Note that when $\n$ is generated by $\n_1$, $H_1(\n)=\n_1$ and the weight $2$
component of $\psi : H_2(\n) \to \L_2(\n_1)= \Lambda^2 \n_1$ is dual to the cup
product $\Lambda^2 H^1(\n) \to H^2(\n)$.

\subsection{Quadratic presentations}

A graded Lie algebra $\n$ is {\em quadratically presented} if it has a graded
presentation of the form
$$
\n = \L(V)/(R)
$$
where $V$ is in weight 1 and $R$ is a subspace of $\L_2(V)$. A pronilpotent Lie
algebra $\n$ with $H_1(\n)$ finite dimensional is {\em quadratically presented}
if it has a presentation of the form
$$
\n = \L(V)^\wedge/(R)
$$
where $R$ is a subspace of $\L_2(V)$. In this case $\n$ is isomorphic to its
degree completion $\prod_{m>0} \Gr^m_\LCS \n$ as a pronilpotent Lie algebra. The
following result is a straightforward consequence of the definitions.

\begin{proposition}
\label{prop:quad_equiv}
Suppose that $\n$ is a pronilpotent Lie algebra with $H_1(\n)$ finite
dimensional. If $\n$ is quadratically presented, then $\Gr^\dot_\LCS \n$ is
quadratically presented and $\n$ is isomorphic (as a pronilpotent Lie algebra)
to the degree completion of $\Gr^\dot_\LCS\n$:
$$
\n \cong \prod_{m\ge 1} \Gr^m_\LCS \n
$$
Conversely, if $\Gr^\dot_\LCS \n$ is quadratically presented and $\n$ is
isomorphic to the degree completion of $\Gr^\dot_\LCS \n$, then $\n$ is
quadratically presented.
\end{proposition}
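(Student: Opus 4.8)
The plan is to reduce both implications to a single bookkeeping statement: degree completion commutes with forming the quotient of a free graded Lie algebra by a homogeneous ideal. Since $H_1(\n)$ is finite dimensional, I set $V := H_1(\n)$ and note that in any quadratic presentation, graded or pronilpotent, the generating space is forced to be $V$, because the relations lie in degrees $\ge 2$ and so do not affect $H_1$. Fix a subspace $R \subseteq \L_2(V)$, and let $I = \bigoplus_{m\ge 2} I_m$ be the homogeneous ideal of the \emph{uncompleted} free Lie algebra $\L(V)$ generated by $R$, with $I_m := I \cap \L_m(V)$. Put $\mathfrak{q} := \L(V)/I$; this is a graded Lie algebra with the tautological quadratic presentation $\mathfrak{q} = \L(V)/(R)$, and it is generated in degree $1$ because $I$ lives in degrees $\ge 2$.

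The technical heart is the identification of the closed ideal $(R)$ generated by $R$ inside the degree completion $\L(V)^\wedge = \prod_m \L_m(V)$. I claim this closed ideal is exactly $\prod_{m\ge 2} I_m$, so that
\[
\L(V)^\wedge/(R) \;\cong\; \prod_{m}\big(\L_m(V)/I_m\big) \;=\; \prod_m \mathfrak{q}_m \;=\; \mathfrak{q}^\wedge,
\]
the degree completion of $\mathfrak{q}$. Granting this, the forward implication follows quickly. If $\n = \L(V)^\wedge/(R)$ is quadratically presented, then $\n \cong \mathfrak{q}^\wedge$; and because $\mathfrak{q}$ is generated in degree $1$ one computes $L^k\mathfrak{q} = \bigoplus_{m\ge k}\mathfrak{q}_m$, hence $L^k\mathfrak{q}^\wedge = \prod_{m\ge k}\mathfrak{q}_m$ and $\Gr^k_\LCS\n = \Gr^k_\LCS\mathfrak{q}^\wedge = \mathfrak{q}_k$. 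Thus $\Gr^\dot_\LCS\n \cong \mathfrak{q}$ is quadratically presented, and at the same time $\n \cong \prod_m \mathfrak{q}_m = \prod_{m\ge 1}\Gr^m_\LCS\n$, as required.

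For the converse, assume $\Gr^\dot_\LCS\n$ is quadratically presented, say $\Gr^\dot_\LCS\n \cong \L(V)/(R) = \mathfrak{q}$ with $R \subseteq \L_2(V)$ and $V = \Gr^1_\LCS\n = H_1(\n)$, and assume $\n \cong \prod_{m\ge 1}\Gr^m_\LCS\n = \mathfrak{q}^\wedge$. Applying the same identification in reverse gives $\n \cong \mathfrak{q}^\wedge \cong \L(V)^\wedge/(R)$, which is by definition a quadratic presentation of $\n$. The main obstacle is exactly the completion bookkeeping behind the displayed claim: I must check that the closed ideal generated by $R$ meets each graded piece in precisely $I_m$ (equivalently, that completion creates no new relations), and that the closure used to define the lower central series of $\mathfrak{q}^\wedge$ yields the naive products $\prod_{m\ge k}\mathfrak{q}_m$ rather than something larger. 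Both points reduce to the homogeneity of $I$ and of the lower central series of $\mathfrak{q}$: the relevant maps split degree by degree, and arbitrary products are exact on vector spaces, so the inverse limits defining the completion commute with the quotient. I expect this to be routine, but it is the single place where genuine care is needed.
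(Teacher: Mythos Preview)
Your argument is correct and is exactly the natural way to unpack the statement. The paper does not actually prove this proposition: it is stated and then followed by the remark ``The following result is a straightforward consequence of the definitions,'' with no further details. Your proof supplies precisely those details --- identifying the closed ideal generated by $R$ in $\L(V)^\wedge$ with $\prod_{m\ge 2} I_m$, and checking that the lower central series of the degree completion $\mathfrak{q}^\wedge$ is $L^k\mathfrak{q}^\wedge = \prod_{m\ge k}\mathfrak{q}_m$ --- and the two bookkeeping points you flag as needing care are indeed the only content, both following from homogeneity and exactness of products of vector spaces.
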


Quadratically presented graded Lie algebras have a standard characterization. 

\begin{lemma}
\label{lem:quad_pres}
A graded Lie algebra $\n$ that is generated in weight $1$ is quadratically
presented if and only if $H_2(\n)$ has weight $2$ or, equivalently, the cup
product $\Lambda^2 H^1(\n) \to H^2(\n)$ is surjective.
\end{lemma}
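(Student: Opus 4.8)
The plan is to prove that each of the two stated conditions is equivalent to quadratic presentability, working throughout from the minimal presentation of $\n$. Since $\n$ is generated in weight $1$, we have $H_1(\n)=\n_1$ concentrated in weight $1$, and the minimal presentation takes the form $\n\cong\L(\n_1)/(\im\psi)$, with $\psi\colon H_2(\n)\to\L(\n_1)$ an injective graded map whose weight-$n$ component is $\psi_n\colon H_2(\n)_n\to\L_n(\n_1)$.

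First I would establish the equivalence of quadratic presentability with the condition that $H_2(\n)$ is concentrated in weight $2$. The implication in which one assumes $H_2(\n)=H_2(\n)_2$ is immediate: since $\psi$ is graded, its image then lies in $\L_2(\n_1)$, so $R:=\im\psi\subseteq\L_2(\n_1)$ exhibits $\n=\L(\n_1)/(R)$ as a quadratic presentation. For the converse, suppose $\n=\L(V)/(R)$ with $V$ in weight $1$ and $R\subseteq\L_2(V)$. The relations lie in $L^2\L(V)$, so the weight-$1$ part is untouched and $V=\n_1$. Writing $\r=(R)$ for the ideal generated by $R$, the graded analogue of the Hopf isomorphism \eqref{eqn:hopf} gives $H_2(\n)\cong\r/[\r,\L(V)]$. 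As $\r$ is generated as an ideal by the weight-$2$ space $R$, its components in weight $\ge 3$ lie in $[\r,\L(V)]$; hence $\r/[\r,\L(V)]$ is spanned by the image of $R$ and is concentrated in weight $2$. Thus $H_2(\n)$ has weight $2$.

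Next I would show that $H_2(\n)$ is concentrated in weight $2$ if and only if the cup product $\Lambda^2 H^1(\n)\to H^2(\n)$ is surjective. Because the weight grading is compatible with duality, $H^2(\n)_n\cong\Hom(H_2(\n)_n,F)$, so $H_2(\n)$ has weight $2$ precisely when $H^2(\n)$ does. Since $H^1(\n)=\Hom(\n_1,F)$ sits in weight $1$, the image of the cup product lies in the weight-$2$ summand $H^2(\n)_2$. The remark following the Chevalley--Eilenberg diagram identifies $\psi_2$ with the transpose of this cup product; as $\psi$ is injective, so is $\psi_2$, whence the cup product maps $\Lambda^2 H^1(\n)$ onto $H^2(\n)_2$. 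Consequently the cup product is surjective onto all of $H^2(\n)$ exactly when $H^2(\n)=H^2(\n)_2$, i.e.\ exactly when $H_2(\n)$ is concentrated in weight $2$; combining the two equivalences proves the lemma.

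The step needing the most care is the converse in the first equivalence: a quadratic presentation need not be the minimal one, so one cannot read off the weight of $H_2(\n)$ directly from the shape of $R$. The graded Hopf isomorphism is precisely what converts ``the ideal of relations is generated in weight $2$'' into ``the minimal relations, namely $H_2(\n)$, lie in weight $2$.'' The remaining inputs---duality of the weight grading and the identification of $\psi_2$ with the transpose of the cup product---are already in hand from the preceding discussion, so no further difficulty arises.
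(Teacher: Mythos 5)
Your proposal is correct and follows essentially the same route as the paper: both arguments rest on the minimal presentation $\n\cong\L(H_1(\n))/(\im\psi)$ together with the graded Hopf isomorphism $H_2(\n)\cong\r/[\f,\r]$, which identifies the minimal generators of the relation ideal with $H_2(\n)$, and both invoke the identification of $\psi_2$ with the transpose of the cup product. Your write-up is simply a more detailed version of the paper's terse proof — in particular, applying the Hopf formula directly to a given (possibly non-minimal) quadratic presentation is exactly the point the paper leaves implicit in its final sentence.
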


\begin{proof}
Suppose that $\n = \L(V)/\r$ is a minimal presentation of $\n$. This means that
$\r\subset [\L,\L]$. Set $\f = \L(V)$. There is a natural isomorphism
$$
H_2(\n) \cong \r/[\f,\r].
$$
A minimal graded space of generators of $\r$ is the image of any section of
$\r\to \r/[\f,\r]=H_2(\n)$. The result follows as $\n$ is quadratically
presented if and only if this set of minimal generators of $\r$ has degree $2$.
\end{proof}

Since morphisms of graded Lie algebras induce graded morphisms of their
homology (and cohomology), we deduce:

\begin{corollary}
\label{cor:quad_pres}
Suppose that $\g \to \n$ is a surjection of graded Lie algebras, both generated
in weight $1$. If $\g$ is quadratically presented and $H_2(\g) \to H_2(\n)$ is
surjective, then $\n$ is quadratically presented.
\end{corollary}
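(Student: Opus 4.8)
The plan is to deduce the statement directly from the characterization of quadratic presentability in Lemma~\ref{lem:quad_pres}, using only that the induced map on second homology is a morphism of graded vector spaces. First I would record what the hypotheses say at the level of $H_2$. Since $\g$ is generated in weight $1$ and is quadratically presented, Lemma~\ref{lem:quad_pres} gives that $H_2(\g)$ is concentrated in weight $2$; equivalently, $H_2(\g) = H_2(\g)_2$. The task is then to establish the corresponding statement for $\n$ and apply Lemma~\ref{lem:quad_pres} in the reverse direction.

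The key step is the interaction between gradedness and surjectivity. As recalled immediately before the statement, a morphism of graded Lie algebras induces a morphism of their Chevalley-Eilenberg complexes that preserves the grading, hence a graded homomorphism $H_2(\g) \to H_2(\n)$; in particular it carries $H_2(\g)_j$ into $H_2(\n)_j$ for each weight $j$. Because $H_2(\g)$ lives entirely in weight $2$, its image lies entirely in $H_2(\n)_2$. By hypothesis this map is surjective, so its image is all of $H_2(\n)$. Combining the two observations forces $H_2(\n) = H_2(\n)_2$, i.e.\ $H_2(\n)$ is concentrated in weight $2$.

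With this in hand the conclusion is immediate: $\n$ is generated in weight $1$ and $H_2(\n)$ has weight $2$, so Lemma~\ref{lem:quad_pres} shows that $\n$ is quadratically presented. The argument is purely formal and carries no computational content. The only point that genuinely needs checking, and hence the main obstacle such as it is, is that the induced map $H_2(\g) \to H_2(\n)$ really is graded; this is precisely the remark preceding the corollary, and it follows from the fact that the grading of a graded Lie algebra induces a grading on its Chevalley-Eilenberg complex and that a morphism of graded Lie algebras induces a grading-preserving chain map. Once that is granted, surjectivity in weight $2$ is automatic from global surjectivity, and the result follows.
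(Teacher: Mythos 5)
Your argument is correct and is exactly the paper's: the corollary is deduced from Lemma~\ref{lem:quad_pres} together with the remark that a morphism of graded Lie algebras induces a graded map on homology, so the surjection $H_2(\g)\to H_2(\n)$ forces $H_2(\n)$ to be concentrated in weight $2$. Nothing is missing.
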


\section{Relative Completion of Discrete Groups}

Suppose that $\G$ is a discrete group and that $R$ is a reductive algebraic
group over a field $F$ of characteristic zero. The completion of $\G$ relative
to a Zariski dense representation $\rho : \G \to R(F)$ is a proalgebraic
$F$-group $\cG$ which is an extension of $R$ by a prounipotent group, and a
homomorphism $\rhohat : \G \to \cG(F)$ such that the composite
$$
\xymatrix{
\G \ar[r]^\rhohat & \cG(F) \ar[r] & R(F)
}
$$
is $\rho$. It is universal for such groups: if $G$ is a proalgebraic $F$ group
that is an extension of $R$ by a prounipotent group, and if $\phi : \G \to G(F)$
is a homomorphism whose composition with $G \to R$ is $\rho$, then there is a
homomorphism $\phihat : \cG \to G$ of proalgebraic $F$-groups such that the
diagram
$$
\xymatrix{
\G \ar[r]^\rhohat\ar[d]_\phi & \cG(F) \ar[dl]_\phihat\ar[d] \cr
G(F) \ar[r] & R(F)
}
$$
commutes.

When $R$ is trivial, $\rho$ is trivial and $\cG = \U$ is the unipotent
completion of $\G$ over $F$.

When discussing the mixed Hodge structure on a relative completion of the
fundamental group of a compact K\"ahler manifold $X$, we need to be able to
compare  the completion of $\pi_1(X,x)$ over $\R$ (or $\Q$) with its completion
over $\C$. For this reason we need to discuss the behaviour of relative
completion under base change.

If $K$ is an extension field of $F$ and $\rho_K : \G \to R(K)$ is Zariski dense
over $K$, then one has the relative completion $\cG_K$ of $\G$ with respect to
$\rho_K$. It is an extension  of $R\times_F K$ by a prounipotent group. The
universal mapping property of $\cG_K$ implies that the homomorphism $\G \to
\cG(K)$ induces a homomorphism $\cG_K \to \cG\times_F K$ of proalgebraic
$K$-groups. This homomorphism is an isomorphism. This is easily seen when each
irreducible representation $V$ of $R$ remains irreducible after extending
scalars from $F$ to $K$. This follows directly from the cohomological results
below.

\subsection{Cohomology}

We continue with the notation above, where $\cG$ is the relative completion of
$\G$. When $R$ is reductive, the structure of $\g$ and $\u$ are closely related
to the cohomology of $\G$ with coefficients in rational representations of $R$.

For each rational representation $V$ of $R$ there are natural isomorphisms
$$
\Hom_R(H_\dot(\u),V) \cong [H^\dot(\u)\otimes V]^R
\cong H^\dot(\g,V)^{\pi_0(R)}.
$$
The homomorphism $\G \to \cG(F)$ induces a homomorphism
\begin{equation}
\label{eqn:homom}
H^\dot(\g,V)^{\pi_0(R)} \to H^\dot(\G,V)
\end{equation}
It is an isomorphism in degrees $\le 1$ and an injection in degree 2.

Denote the set of isomorphism classes of finite dimensional irreducible
representations of $R$ by $\Rdual$. Fix an $R$-module $V_\lambda$ in each
isomorphism class $\lambda \in \Rdual$. If each irreducible representation of
$R$ is absolutely irreducible\footnote{This is the case when $R=\Sp_g$ over any
field of characteristic zero.} and if $H^j(\G,V)$ is finite dimensional for all
rational representations $V$ of $R$ when $j\le 2$, then there is an isomorphism
$$
\prod_{\lambda \in \check{R}} [H^1(\G,V_\lambda)]^\ast \otimes_F V_\lambda
\cong H_1(\u)
$$
of topological modules, and a continuous $R$-invariant surjection
$$
\prod_{\lambda \in \check{R}} [H^2(\G,V_\lambda)]^\ast \otimes_F V_\lambda
\to H_2(\u).
$$
In both cases, the LHS has the product topology.

\begin{lemma}
\label{lem:coho}
Suppose that $\rho : \G \to R(F)$ is a Zariski dense representation from a
discrete group to a reductive group. If $\phi : \G' \to \G$ is a homomorphism
that such that $\rho\circ\phi$ is Zariski dense and such that, for all rational
representations $V$ of $R$,
$$
H^j(\G,V) \to H^j(\G',V)
$$
is an isomorphism when $j=1$ and injective when $j=2$, then the completion
$\cG'\to \cG$ of $\phi$ relative to $\rho$ is an isomorphism. 
\end{lemma}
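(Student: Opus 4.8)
The plan is to reduce the assertion to the prounipotent radicals and then to a homological criterion for a morphism of pronilpotent Lie algebras to be an isomorphism. First I would produce the comparison map via the universal mapping property: the composite $\G'\to\G\to\cG(F)$ is a homomorphism whose projection to $R$ is $\rho\circ\phi$, so it induces a morphism $\cG'\to\cG$ of proalgebraic $F$-groups commuting with the projections to $R$. Because $\rho\circ\phi$ factors through $\rho$, the endomorphism of $R$ induced by $\cG'\to\cG$ is the identity; hence $\cG'\to\cG$ restricts to a morphism $\U'\to\U$ of prounipotent radicals, and $\cG'\to\cG$ is an isomorphism if and only if $\U'\to\U$ is. By the equivalence between prounipotent groups and pronilpotent Lie algebras it suffices to show that the induced morphism $f:\u'\to\u$ is an isomorphism.

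Second, I would invoke the Lie algebra analogue of Stallings' theorem: a continuous morphism $f$ of pronilpotent Lie algebras is an isomorphism provided $H_1(f)$ is an isomorphism and $H_2(f)$ is surjective. Surjectivity of $f$ follows from surjectivity of $H_1(f)$ by a completeness argument, since $\Gr^\dot_\LCS\u$ is generated in degree one, so that $H_1(f)$ surjective forces $\Gr^\dot_\LCS f$ surjective, and a continuous map of complete filtered Lie algebras that is surjective on associated gradeds is surjective. For injectivity I would set $\k=\ker f$ and apply the five-term exact sequence of the extension $0\to\k\to\u'\to\u\to 0$,
$$
H_2(\u')\to H_2(\u)\to \k/[\u',\k]\to H_1(\u')\to H_1(\u)\to 0,
$$
whose connecting term is the coinvariants $\k/[\u',\k]$. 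Since $H_1(f)$ is an isomorphism the map $\k/[\u',\k]\to H_1(\u')$ is zero, so $H_2(\u)\to\k/[\u',\k]$ is surjective; since $H_2(f)$ is surjective that same map is zero. Hence $\k=[\u',\k]$, and iterating gives $\k\subseteq\bigcap_m L^m\u'=0$ by pronilpotence, so $f$ is injective.

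Third, I would feed the cohomological hypotheses into this criterion through the natural identifications recalled before the lemma. The isomorphism $\prod_{\lambda\in\Rdual}[H^1(\G,V_\lambda)]^\ast\otimes_F V_\lambda\cong H_1(\u)$ is natural in the pair $(\G,\rho)$, so the hypothesis that $H^1(\G,V)\to H^1(\G',V)$ is an isomorphism for every rational representation $V$ dualizes to show that $H_1(f)$ is an isomorphism. Similarly, the natural $R$-equivariant surjection $\prod_\lambda[H^2(\G,V_\lambda)]^\ast\otimes V_\lambda\to H_2(\u)$, and its analogue for $\u'$, fit into a commuting square with $H_2(f)$; the hypothesis that $H^2(\G,V)\hookrightarrow H^2(\G',V)$ is injective makes the induced map $\prod_\lambda[H^2(\G',V_\lambda)]^\ast\otimes V_\lambda\to\prod_\lambda[H^2(\G,V_\lambda)]^\ast\otimes V_\lambda$ surjective, and since the bottom horizontal arrow is surjective as well, the composite factoring through $H_2(f)$ is surjective, forcing $H_2(f)$ surjective. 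The criterion of the previous paragraph then yields that $f$, and hence $\cG'\to\cG$, is an isomorphism.

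I expect the main obstacle to be the homological criterion itself, that is, controlling $\ker f$ by the five-term sequence in the pro-category. The delicate point is that $H_2(\u)$ is only a \emph{quotient} of $\prod_\lambda[H^2(\G,V_\lambda)]^\ast\otimes V_\lambda$ — the map \eqref{eqn:homom} is merely injective in degree $2$ — so I must verify that all the identifications are natural with respect to $\phi$ and that the inverse-limit topologies are respected, so that surjectivity of $H_2(f)$ is genuinely extracted from injectivity of $H^2(\G,V)\to H^2(\G',V)$ and not lost in the passage to the possibly larger ambient space.
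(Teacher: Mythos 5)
Your proposal is correct and takes essentially the same route as the paper: reduce to the induced map of pronilpotent Lie algebras $\u' \to \u$, transfer the group-cohomology hypotheses through the natural isomorphism $\prod_\lambda [H^1(\G,V_\lambda)]^\ast\otimes V_\lambda \cong H_1(\u)$ and the natural surjection onto $H_2(\u)$, and invoke the criterion that such a map is an isomorphism once it is an isomorphism on first (co)homology and surjective on $H_2$ (equivalently, injective on $H^2$). The only difference is that the paper cites this criterion as a known fact, while you supply its standard proof (surjectivity via the associated graded, injectivity via the five-term sequence and $\k=[\u',\k]\subseteq\bigcap_m L^m\u'=0$); this is a worthwhile elaboration rather than a different approach.
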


\begin{proof}
This follows directly from the cohomological results above and that fact that a
homomorphism of pronilpotent Lie algebras $\u'\to\u$ is an isomorphism if and
only if $H^j(\u) \to H^j(\u')$ is an isomorphism in degree 1 and injective in
degree 2.
\end{proof}

\subsection{Hodge theory}
\label{sec:hodge}
Suppose that $X$ is the complement of a normal crossings divisor in a compact
K\"ahler manifold. Suppose that $F=\Q$ or $\R$ and that $\V$ is a polarized
variation of $F$-Hodge structure (PVHS) over $X$. Pick a base point $x_o \in X$.
Denote the fiber over $\V$ over $x_o$ by $V_o$. The Zariski closure of the image
of the monodromy representation
$$
\rho : \pi_1(X,x_o) \to \Aut(V_o)
$$
is a reductive $F$-group. Denote it by $R$. Then one has the relative completion
$\cG$ of $\pi_1(X,x_o)$ with respect to $\rho : \pi_1(X,x_o) \to R(F)$.

\begin{theorem}[\cite{hain:malcev}]
The coordinate ring $\cO(\cG)$ is a Hopf algebra in the category of Ind-mixed
Hodge structures over $F$. It has the property that $W_{-1}\cO(\cG)=0$ and
$W_0\cO(\cG) = \cO(R)$.
\end{theorem}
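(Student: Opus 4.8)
The plan is to produce $\cO(\cG)$ from a de Rham / bar-construction model attached to the K\"ahler geometry of $X$ and then transport onto $\cO(\cG)$ the mixed Hodge structure that this model carries. The starting point is a relative form of Chen's theorem: the coordinate ring of the relative completion is computed as the zeroth cohomology of a reduced bar complex $B(A^\dot)$ on the de Rham complex of $X$ with coefficients in the matrix entries of the tautological local system, i.e.\ in $\bigoplus_{\lambda \in \Rdual} \End(\V_\lambda)$, equipped with its shuffle product, deconcatenation coproduct, antipode, and $R$-coaction. Granting this, the theorem reduces to the statement that the bar complex is a mixed Hodge complex and that all of the Hopf-algebra structure maps are morphisms of mixed Hodge complexes; since $\cO(\cG)$ is an increasing union of finite-dimensional subobjects, the resulting object is a Hopf algebra in Ind-MHS.

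Two Hodge-theoretic inputs feed the model. First, because $X = \overline{X} - D$ with $D$ a normal crossings divisor in a compact K\"ahler $\overline{X}$, Deligne's logarithmic de Rham complex makes the constant-coefficient de Rham complex into a mixed Hodge complex computing $H^\dot(X;\Q)$. Second, and more essentially, each cohomology $H^\dot(X,\V_\lambda)$ with coefficients in a polarized variation of Hodge structure carries a functorial mixed Hodge structure; in the present open, normal-crossings setting this is provided by Saito's mixed Hodge modules (by Zucker in the one-dimensional case), and one needs a mixed-Hodge-complex representative --- a logarithmic or $L^2$ Dolbeault/de Rham model --- whose weights are normalized so that $H^k(X,\V_\lambda)$ of a weight-$w$ variation has weight $w+k$.

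I would then show that the reduced bar construction of such a multiplicative mixed Hodge complex is again a mixed Hodge complex, with weight filtration the convolution of the bar-length (augmentation) filtration with the weight filtrations of the coefficients. This is the relative analogue of the bar-construction comparison used for unipotent completion. The shuffle product, deconcatenation coproduct and antipode are all induced by maps of the underlying bar complexes, hence are morphisms of mixed Hodge complexes, and the $R$-coaction is induced by the grading over $\Rdual$; passing to $H^0$ therefore equips $\cO(\cG)$ with its Hopf-algebra structure in the category of Ind-MHS.

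For the weight bounds I would track the bar-length filtration, normalizing the $R$-representations to weight $0$ as the polarization permits. The length-zero stratum of the bar complex recovers the flat matrix-coefficient functions, and its $H^0$ is $\bigoplus_\lambda V_\lambda \otimes V_\lambda^\ast = \cO(R)$ by Peter--Weyl, pure of weight $0$. Every higher-length stratum is built from the cohomology $H^{\ge 1}(X,\V_\lambda)$ of weight-$0$-normalized polarized variations, which has weight $\ge 1$; since weight is the convolution of length with these positive weights, the augmentation ideal lands in weights $\ge 1$. Hence $W_{-1}\cO(\cG)=0$ and $W_0\cO(\cG)=\cO(R)$, dually placing the pronilpotent radical $\u$ in weights $\le -1$. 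The main obstacle is precisely the second input together with this weight bookkeeping: one must construct a genuine mixed Hodge complex (not merely a Hodge structure on cohomology) computing $H^\dot(X,\V_\lambda)$ in the open case, with the correct weight normalization, and then check that the length convolution forces the augmentation ideal into strictly positive weight --- the step where polarizability of the variation is indispensable.
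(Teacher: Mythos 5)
This theorem is imported from \cite{hain:malcev} and the present paper offers no proof of it, and your outline --- realizing $\cO(\cG)$ as $H^0$ of a reduced bar construction on forms with coefficients in $\bigoplus_{\lambda\in\Rdual}\End(\V_\lambda)$, showing that this bar complex is a (multiplicative) mixed Hodge complex, and extracting $W_{-1}\cO(\cG)=0$, $W_0\cO(\cG)=\cO(R)$ from the convolution of the bar-length filtration with the coefficient weights --- is essentially the argument of that reference. One small correction: no ``normalization of the $R$-representations to weight $0$'' is needed (nor is it generally possible over $\Q$, e.g.\ for odd-weight $V_\lambda$); what actually enters the bar complex is $\End(V_\lambda)=V_\lambda\otimes V_\lambda^\ast$, which is canonically pure of weight $0$, and that is what makes your weight bookkeeping go through.
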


A slightly weaker version of the theorem is stated in terms of Lie algebras.
Denote the prounipotent radical of $\cG$ by $\U$. Denote their Lie algebras by
$\g$ and $\u$, and the Lie algebra of $R$ by $\r$.

\begin{corollary}[\cite{hain:malcev}]
The Lie algebra $\g$ is a Lie algebra in the category of pro-mixed Hodge
structures over $F$. It has the property that
$$
\g = W_0 \g,\ \u= W_{-1}\g, \text{ and } \Gr^W_0 \g \cong \r.
$$
If $\V$ is a PVHS over $X$ with fiber $V_o$ over the basepoint $x_o$, then the
composite
$$
H^\dot(\g,V_o)^{\pi_0(R)} \to H^\dot(\G,V_o) \to H^\dot(X,\V)
$$
of (\ref{eqn:homom}) with the canonical homomorphism is a morphism of MHS.
\end{corollary}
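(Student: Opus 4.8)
The plan is to deduce the Corollary from the Hopf-algebra statement of the preceding Theorem by passing to the dual, together with the standard dictionary between an affine proalgebraic group and its Lie algebra. Recall that if $I \subseteq \cO(\cG)$ denotes the augmentation ideal (the kernel of the counit $\cO(\cG) \to F$), then the cotangent space at the identity is $I/I^2$ and $\g = (I/I^2)^\ast$; more precisely, $\g$ is the space of primitives of the continuous dual Hopf algebra $\cO(\cG)^\vee$, whose multiplication is dual to the comultiplication of $\cO(\cG)$. Since $\cO(\cG)$ is a Hopf algebra in the tensor category of Ind-MHS, every structure map --- the product, the coproduct, the counit and the antipode --- is a morphism of Ind-MHS. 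First I would check that $I$ and $I^2$ are sub-Ind-MHS of $\cO(\cG)$ (the former as the kernel of a morphism, the latter as the image of the morphism $I \otimes I \to \cO(\cG)$), so that $I/I^2$ is an Ind-MHS and hence $\g$ is a pro-MHS; the bracket on $\g$, being the commutator in $\cO(\cG)^\vee$, is then automatically a morphism of pro-MHS. This establishes the first assertion, that $\g$ is a Lie algebra in pro-MHS.

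Next I would read off the weight statements from the Theorem. The vanishing $W_{-1}\cO(\cG) = 0$ gives $W_{-1}I = 0$, so $I$, and therefore $I/I^2$, has weights $\ge 0$. Because the inclusion $\cO(R) = W_0\cO(\cG) \hookrightarrow \cO(\cG)$ is a morphism of Hopf algebras in MHS and morphisms of MHS are strict, I would compute $W_0 I = I \cap \cO(R) = I_R$ and $W_0(I^2) = I_R^2$ (the weight-$0$ part of a product comes only from products of weight-$0$ elements), whence $\Gr^W_0(I/I^2) \cong I_R/I_R^2$, the cotangent space of $R$. Dualizing sends weight $w$ to weight $-w$: this immediately yields $\g = W_0\g$ and $\Gr^W_0\g \cong (I_R/I_R^2)^\ast \cong \r$. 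Finally, $W_{-1}\g$ is the annihilator of $W_0(I/I^2) = \Gr^W_0(I/I^2) = I_R/I_R^2$, which is exactly $\ker(\g \to \r)$; but this kernel is the Lie algebra $\u$ of the prounipotent radical $\U = \ker(\cG \to R)$. Hence $\u = W_{-1}\g$ and $\Gr^W_0\g \cong \r$, as claimed.

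For the cohomological assertion I would first note that both ends of the composite carry intrinsic mixed Hodge structures: $H^\dot(\g, V_o)$ is computed by the Chevalley--Eilenberg complex $\Hom(\Lambda^\dot\g, V_o)$, which is a complex in Ind-MHS once $\g$ is a pro-MHS and $V_o$ --- the fiber of the PVHS $\V$ --- is a Hodge structure, and passing to $\pi_0(R)$-invariants is an exact functor on MHS; while $H^\dot(X,\V)$ carries the Hodge-theoretic MHS on the cohomology of the local system underlying $\V$. The point to prove is that the specific map (\ref{eqn:homom}) followed by the canonical comparison $H^\dot(\G, V_o) \to H^\dot(X,\V)$ respects these structures. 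The way I would do this is to realize $H^\dot(\g, V_o)$ through a bar-type (Chen iterated-integral) model built on a mixed Hodge complex computing $H^\dot(X,\V)$, so that the displayed composite is the map induced on cohomology by an actual morphism of mixed Hodge complexes; compatibility with the MHS is then automatic.

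I expect this last step to be the main obstacle, and indeed it is the real content of \cite{hain:malcev}: one must produce a Hodge-theoretic de Rham model for the relative completion $\cG$ --- equivalently, put a mixed Hodge complex structure on the reduced bar construction of the de Rham complex of $X$ with coefficients in the relevant tensor powers of $\V$ --- in such a way that the edge homomorphism of the associated bar spectral sequence recovers (\ref{eqn:homom}) and the comparison to $H^\dot(X,\V)$. The formal dualization in the first two paragraphs is routine once the Theorem is granted; it is this construction, and the verification that the iterated-integral pairing is a morphism of MHS, that carries the weight of the proof.
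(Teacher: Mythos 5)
The paper gives no proof of this Corollary --- it is quoted from \cite{hain:malcev} and framed as the Lie-algebra shadow of the preceding Hopf-algebra Theorem --- and your dualization of the weight statements on $\cO(\cG)$ to weight statements on $\g = (I/I^2)^\ast$ (strictness giving $W_0 I = I_R$, $W_0(I^2)=I_R^2$, hence $\Gr^W_0\g\cong\r$ and $W_{-1}\g=\ker(\g\to\r)=\u$) is precisely the formal argument that framing presupposes. Your assessment of the final assertion is also accurate: the claim that the composite into $H^\dot(X,\V)$ is a morphism of MHS cannot be extracted from the Theorem as stated and is genuinely part of the bar-construction/mixed-Hodge-complex machinery of \cite{hain:malcev}, to which the paper likewise defers.
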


The existence of the mixed Hodge structure on $\u$ in the unipotent case and
when $X$ is not necessarily compact is due to Morgan \cite{morgan} and Hain
\cite{hain:hodge}.

\subsection{Presentations}
We continue with the notation of Section~\ref{sec:hodge}. The problem of
computing presentations of $\g$ and $\u$ is simplified as the exactness of
$\Gr^W_\dot$ implies that there is a natural isomorphism 
$$
\g \cong \prod_{m\le 0} \Gr^W_m \g
$$
of topological Lie algebras. In order to determine $\g$, it suffices to compute
$\Gr^W_\dot\u$ as a graded Lie algebra in the category of $R$-modules. Exactness
of $\Gr^W_\dot$ implies that $H^\dot(\Gr^W_\dot\u)\cong\Gr^W_\dot H^\dot(\u)$.

When $H_1(\u)$ is finite dimensional, the Lie algebra $\Gr^W_\dot\u$ has a
presentation of the form
$$
\Gr^W_\dot \u \cong \L(\Gr^W_\dot H_1(\u))/(\im \psi)
$$
in the category of Ind $R$-modules for a suitable $R$-module map $\psi :
H_2(\Gr^W_\dot\u) \to L^2 \L(\Gr^W_\dot H_1(\u))$.

The following result is a formal consequence of results stated above and the
fact, due to Deligne (cf.\ \cite[Thm.~2.9]{zucker}), that if $\V$ is a PVHS over
$X$ of weight $m$, then the smooth forms on $X$ with coefficients in $\V$ is a
Hodge complex and $H^j(X,\V)$ has a Hodge structure of weight $m+j$.

\begin{theorem}[{\cite[Thm.~13.14]{hain:malcev}}]
\label{thm:u_quad}
If $X$ is a compact K\"ahler manifold, then
\begin{enumerate}

\item the MHS on $H^j(\u)$ is pure of weight $j$ when $j\le 2$,

\item the weight filtration of $\u$ is its lower central series $W_{-m}\u =
L^m\u$,

\item $\u$ is quadratically presented.

\end{enumerate}
\end{theorem}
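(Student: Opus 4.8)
The plan is to establish the three assertions in order; part (i) carries essentially all of the Hodge theory, while parts (ii) and (iii) follow formally from it together with the structural results recalled above. For part (i) I would first record the Hodge-theoretic input: since the monodromy has Zariski dense image in $R$ and $\V$ is a PVHS, every irreducible $R$-module $V_\lambda$ occurs as a direct summand of a tensor construction on $\V$, and by polarizability this summand splits off as a sub-PVHS $\V_\lambda$; thus $V_\lambda$ carries a pure Hodge structure of some weight $w_\lambda$ and $V_\lambda^*$ one of weight $-w_\lambda$. Because $X$ is compact, Deligne's theorem (in the form recalled via \cite{zucker} above) gives that $H^j(X,\V_\lambda^*)$ is a pure Hodge structure of weight $-w_\lambda+j$.

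Next I would invoke the corollary of \cite{hain:malcev} from Section~\ref{sec:hodge}: for $j\le 2$ the composite $H^j(\g,V_\lambda^*)^{\pi_0(R)}\to H^j(\G,V_\lambda^*)\to H^j(X,\V_\lambda^*)$ is a morphism of MHS, and it is injective since the first map is an isomorphism for $j\le 1$ and injective for $j=2$, and the second (group cohomology into the cohomology of $X$) has the same property. A sub-MHS of a pure Hodge structure is pure, so $H^j(\g,V_\lambda^*)^{\pi_0(R)}\cong [H^j(\u)\otimes V_\lambda^*]^R$ is pure of weight $-w_\lambda+j$. Feeding this into the isotypical decomposition $H^j(\u)\cong\bigoplus_\lambda V_\lambda\otimes [H^j(\u)\otimes V_\lambda^*]^R$, which respects the Hodge structures because $R$ acts by Hodge morphisms, each summand is pure of weight $w_\lambda+(-w_\lambda+j)=j$: the weight of $V_\lambda$ cancels that of its multiplicity space. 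Hence $H^j(\u)$ is pure of weight $j$ for $j\le 2$, and dually $H_1(\u)$ and $H_2(\u)$ are pure of weights $-1$ and $-2$.

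For part (ii) the inclusion $L^m\u\subseteq W_{-m}\u$ is automatic, since the bracket is a morphism of MHS and $\u=W_{-1}\u$. For the converse I would use that $H_1(\u)$ is pure of weight $-1$: by exactness of $\Gr^W$ one has $H_1(\Gr^W\u)=\Gr^W H_1(\u)$ concentrated in weight $-1$, so $\Gr^W\u$ is generated in weight $-1$; thus each $\Gr^W_{-m}\u$ is spanned by $m$-fold brackets of weight $-1$ elements, giving $W_{-m}\u\subseteq L^m\u+W_{-m-1}\u$, and iterating over the complete separated filtration (so that $L^m\u$ is closed) yields $W_{-m}\u=L^m\u$. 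For part (iii) I then combine (i) and (ii): by (ii) the weight grading on $\Gr^W\u=\Gr^\dot_\LCS\u$ is the lower central series grading, and by (i) with exactness of $\Gr^W$ the homology $H_2(\Gr^W\u)=\Gr^W H_2(\u)$ is concentrated in weight $-2$. Since $\Gr^\dot_\LCS\u$ is generated in weight $-1$, Lemma~\ref{lem:quad_pres} shows it is quadratically presented, and the topological isomorphism $\u\cong\prod_{m\ge 1}\Gr^m_\LCS\u$ (from exactness of $\Gr^W$ as in the Presentations subsection, combined with (ii)) identifies $\u$ with the degree completion of $\Gr^\dot_\LCS\u$, so Proposition~\ref{prop:quad_equiv} gives that $\u$ is quadratically presented.

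The \emph{main obstacle} is part (i): its genuine content is the purity of $H^{\le 2}(\u)$, resting on realizing each $R$-representation as a PVHS and, crucially, on the fact from \cite{hain:malcev} that the comparison map into $H^\dot(X,\V_\lambda)$ is a morphism of MHS that is injective through degree $2$. Once purity is established, parts (ii) and (iii) are formal consequences of the recalled structure theory for pronilpotent Lie algebras. The point demanding the most care is verifying that the isotypical decomposition of $H^j(\u)$ is an isomorphism of mixed Hodge structures, that is, that the Hodge structure carried by $V_\lambda$ precisely cancels the weight shift in its multiplicity space.
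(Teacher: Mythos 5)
Your argument is correct and follows exactly the route the paper indicates: the theorem is quoted from \cite{hain:malcev} and described as a formal consequence of the comparison map $H^j(\g,V)^{\pi_0(R)}\to H^j(X,\V)$ being an injective morphism of MHS for $j\le 2$ together with Deligne's purity of $H^j(X,\V)$ for a PVHS over a compact K\"ahler base, which is precisely your part (i), with (ii) and (iii) then deduced formally via Lemma~\ref{lem:quad_pres} and Proposition~\ref{prop:quad_equiv}. Your explicit attention to the weight cancellation in the isotypical decomposition is the right place to be careful, and the argument goes through.
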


In the case of unipotent completion, this was first proved by Deligne,
Griffiths, Morgan and Sullivan in \cite{dgms}.

\section{Symplectic Groups and their Representations}

Suppose that $A$ is a commutative ring and that $H$ is a free $A$-module of rank
$2g$ with a unimodular, skew symmetric bilinear pairing $\bil$. The symplectic
group $\Sp(H)$ is defined to be $\Aut(H,\bil)$. The choice of a symplectic basis
$a_1,\dots,a_g,b_1,\dots,b_g$ of $H$ gives an isomorphism of $\Sp(H)$ with
$\Sp_g(A)$.

When $A$ is a field $F$ of characteristic zero, $\Sp(H)$ is a simply connected
simple algebraic $F$-group all of whose irreducible representations are
absolutely irreducible. Denote its Lie algebra by $\sp(H)$. This is isomorphic
to the Lie algebra $\sp_g(F)$.

Every irreducible representation of $\Sp(H)$ is a submodule of some tensor power
$H^{\otimes m}$ of $H$. Fix a symplectic basis of $H$. Set
$$
\theta := a_1\wedge b_1 + \dots + a_g\wedge b_g \in \Lambda^2 H.
$$
Every linear automorphism of $H$ extends to a derivation of the tensor algebra
$T(H)$ on $H$. With this convention, $\sp(H)$ is the subalgebra of derivations
of $T(H)$ that annihilate $\theta$:
$$
\sp(H) = \{\delta : H \to H : \delta(\theta) = 0\}.
$$
The abelian subalgebra $\h$ of $\sp(H)$ consisting of those derivations
$t=(t_1,\dots,t_g)$ that act on $H$ via
$$
t\cdot a_j = t_j a_j \text{ and } t\cdot b_j = -t_j b_j.
$$
is a Cartan subalgebra. The derivations
$$
S_{i,j} := a_i\partial/\partial a_j - b_j\partial/\partial b_i\ (i< j)
\text { and }
F_{i,j} := a_j \partial/\partial b_i + a_i\partial/\partial b_j (\text{all }i,j)
$$
of $T(H)$ span a nilpotent subalgebra of $\sp(H)$ which, together with $\h$,
span a Borel subalgebra of $\sp(H)$.

A fundamental set of highest weights with respect to this Borel subalgebra is
$\{\lambda_j:1\le j\le g\}$, where $\lambda_j : \h \to F$ is the function
defined by
$$
\lambda_k(t) = t_1 + \dots + t_k.
$$
We will denote the irreducible $\sp(H)$ module with highest weight $\lambda =
\sum_{j=1}^g n_j \lambda_j$ by $V(\lambda)$.

The exterior algebra $\Lambda^\dot H$ is an algebra in the category of
$\Sp(H)$-modules. The fundamental representation $V(\lambda_k)$ is the degree
$k$ part of the quotient of it by the ideal $(\theta)$:
$$
V(\lambda_k) = \Lambda^k_0 H := \Lambda^k H/(\theta\wedge \Lambda^{k-2}H).
$$

Recall that every irreducible $\sp(H)$-module has a non-degenerate invariant
bilinear pairing. Equivalently, every irreducible $\sp(H)$-module is isomorphic
to its dual.

\subsection{Stability in the representation ring of $\Sp(H)$}

Fix a sequence of inclusions
$$
\sp_1 \subset \sp_2 \subset \sp_3 \subset \cdots.
$$
Each of these induces an inclusion $\Rep(\sp_h) \hookrightarrow \Rep(\sp_{h+1})$
of representation rings. One can describe this in terms of symmetric polynomials
(i.e., characters), dominant integral weights or partitions. For example, the
inclusion takes the irreducible representation of $\sp_h$ with highest weight
$\lambda$ to the irreducible representation of $\sp_{h+1}$ with the
corresponding weight. See \cite{kabanov:stability} and \cite[\S6]{hain:torelli}
for details. In \cite{kabanov:stability}, Kabanov proves that the decomposition
of Schur functors and tensor products of representations stabilize in this
sense. Stability allows one to compute stable decompositions of tensor products
and Schur functors, by computing the decomposition in one case in the stable
range. See \cite[\S 6]{hain:torelli} for a more detailed discussion.

\subsection{Unipotent completion of surface groups}

Suppose that $C$ is a compact Riemann surface of genus $g$ and that $x\in C$.
Denote the Lie algebra of the unipotent completion $\cP$ of $\pi_1(C,x)$ by
$\p$. Set $H=H_1(C;\Q)$. Let $a_1,\dots,a_g,b_1,\dots,b_g$ be a symplectic basis
of $H$. Applying the results of the previous section, $\p$ has a natural MHS
whose associated graded has the presentation:
$$
\Gr^W_\dot \p \cong \L(H)/(\scriptstyle{\sum_{j=1}^g} [a_j, b_j]).
$$
This is a Lie algebra in the category of $\Sp(H)$-modules.\footnote{This
statement is also a consequence of Labute's Theorem \cite{labute}.}

The $g\ge 3$ case of the following result is \cite[Prop.~8.4]{hain:torelli}. The
genus $2$ case is proved similarly.

\begin{proposition}
The highest weight decomposition of the first four weight graded quotients of
$\p$ are:
$$
\Gr^W_{-m} \p =
\begin{cases}
V(\lambda_1) & m=1,\ g\ge 1,\cr
V(\lambda_2) & m=2,\ g\ge 2,\cr
V(\lambda_1+\lambda_2)& m=3,\ g\ge 2,\cr
V(\lambda_1+\lambda_3) + V(2\lambda_1) + V(2\lambda_1+\lambda_2) & m=4,\ g\ge 3.
\end{cases}
$$
\end{proposition}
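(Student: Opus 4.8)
The plan is to read off each $\Gr^W_{-m}\p$ as the weight $m$ (equivalently degree $m$) component of the graded Lie algebra $\L(H)/(\theta)$, where $\theta = \sum_j [a_j,b_j]\in\L_2(H)$, and to decompose it in the category of $\Sp(H)$-modules. Write $\overline{\L}=\L(H)/(\theta)$ and let $I\subseteq\L(H)$ be the ideal generated by $\theta$, so that $\Gr^W_{-m}\p\cong\L_m(H)/I_m$. The cases $m=1,2$ are immediate: $\overline{\L}_1=H=V(\lambda_1)$, and since $\L_2(H)=\Lambda^2 H= V(\lambda_2)\oplus V(0)$ with the trivial summand spanned by $\theta$, we get $\overline{\L}_2=V(\lambda_2)$ for $g\ge 2$ (and $0$ when $g=1$).

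For $m=3,4$ I would first decompose the free Lie algebra pieces $\L_m(H)$ as $\GL(H)$-modules using the classical Schur-functor description $\L_3=\mathbb{S}_{(2,1)}$ and $\L_4=\mathbb{S}_{(3,1)}\oplus\mathbb{S}_{(2,1,1)}$, and then restrict to $\Sp(H)$ by Littlewood's restriction rule, contracting against partitions of even column length via the coefficients $c^\lambda_{\mu\delta}$. This restriction is valid in the stable range, which here is supplied by the stability results of Kabanov recalled above; note that the three-row partition $(2,1,1)$ forces the hypothesis $g\ge 3$ in weight $4$, matching the statement. The bookkeeping yields, in the stable range,
\[
\L_3(H)\cong V(\lambda_1+\lambda_2)\oplus V(\lambda_1),\qquad
\L_4(H)\cong V(\lambda_1+\lambda_3)\oplus V(2\lambda_1+\lambda_2)\oplus 2\,V(2\lambda_1)\oplus V(\lambda_2).
\]

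It then remains to identify the graded pieces of the ideal $I$. In degree $3$, $I_3=[\theta,H]$ is a nonzero $\Sp(H)$-equivariant image of $V(\lambda_1)$, hence equals the unique copy of $V(\lambda_1)$ in $\L_3(H)$, giving $\overline{\L}_3=V(\lambda_1+\lambda_2)$. In degree $4$ the Jacobi identity lets me rewrite $[\Lambda^2 H,\theta]$ in terms of $[H,[H,\theta]]$, so that $I_4=[H,I_3]$ is a quotient of $V(\lambda_1)\otimes V(\lambda_1)=V(2\lambda_1)\oplus V(\lambda_2)\oplus V(0)$; since $\L_4(H)$ has no trivial summand, the $V(0)$ must map to zero, and $I_4\subseteq V(2\lambda_1)\oplus V(\lambda_2)$. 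The heart of the argument, and the step I expect to be the main obstacle, is to show that $I_4$ is \emph{all} of $V(2\lambda_1)\oplus V(\lambda_2)$, i.e.\ that $[H,[H,\theta]]$ meets the $V(\lambda_2)$-isotypic part and exactly one of the two copies of $V(2\lambda_1)$ nontrivially. I would establish this by exhibiting explicit highest-weight vectors, for instance by checking that $[a_1,[a_1,\theta]]$ and $[a_1,[a_2,\theta]]$ are nonzero in $\L_4(H)$ (say by passing to a convenient quotient or comparing against a Lyndon basis). Granting this non-vanishing, subtracting $I_4$ from $\L_4(H)$ leaves $V(\lambda_1+\lambda_3)\oplus V(2\lambda_1)\oplus V(2\lambda_1+\lambda_2)$, as claimed. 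The genus $2$ case follows from the identical computation after discarding the constituents (such as $V(\lambda_3)$ and $V(\lambda_1+\lambda_3)$) that vanish when $g=2$.
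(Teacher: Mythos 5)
Your computation is essentially correct, but note that the paper does not actually prove this proposition: it cites \cite[Prop.~8.4]{hain:torelli} for the case $g\ge 3$ and asserts that genus~$2$ is similar. What you have written is therefore a self-contained version of the computation the paper delegates to that reference, and your architecture --- decompose $\L_m(H)$ via Schur functors, restrict to $\Sp(H)$ by Littlewood's rule in the stable range, then strip out the graded pieces $I_m=\ad(H)^{m-2}\theta$ of the ideal $(\theta)$ --- is the standard one. Your Littlewood bookkeeping checks out: $\bS_{(3,1)}|_{\Sp}=V(2\lambda_1+\lambda_2)+V(2\lambda_1)$ and $\bS_{(2,1,1)}|_{\Sp}=V(\lambda_1+\lambda_3)+V(2\lambda_1)+V(\lambda_2)$, giving the stated decomposition of $\L_4(H)$, and the row-length restrictions on the partitions do account for the genus hypotheses. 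One point to tighten in the final step: to see that $I_4$ meets the $V(\lambda_2)$-isotypic part, the vector to test is the image of the highest weight vector $a_1\wedge a_2$ of $\Lambda^2H$, namely $[a_1,[a_2,\theta]]-[a_2,[a_1,\theta]]=[[a_1,a_2],\theta]$, not $[a_1,[a_2,\theta]]$ alone, which could a priori be nonzero purely through its $S^2H$-component. Both required non-vanishings are then immediate from the fact that two nonzero elements of a free Lie algebra commute only if they are proportional: since $[a_1,\theta]\notin\Q a_1$ we get $[a_1,[a_1,\theta]]\neq 0$, and since $[a_1,a_2]$ and $\theta$ are linearly independent in $\L_2(H)$ we get $[[a_1,a_2],\theta]\neq 0$. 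With that small repair the argument is complete.
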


The exactness of $\Gr^W_\dot$ implies that there are natural graded Lie algebra
isomorphisms
$$
\Gr^W_\dot\Der\p \cong \Der\Gr^W_\dot \p
\text{ and }
\Gr^W_\dot\OutDer\p \cong \OutDer\Gr^W_\dot \p.
$$
Since $\Gr^W_\dot\p$ (and hence $\p$ as well) has trivial center
\cite{asada-kaneko}, the sequence
$$
0 \to \Gr^W_\dot\p \to \Der \p \to \OutDer\p \to 0
$$
is exact.

\begin{corollary}[{\cite[Cor.~9.4]{hain:torelli}}]
\label{cor:outder}
When $g\ge 3$, we have
$$
\Gr^W_{-m} \OutDer \p =
\begin{cases}
V(\lambda_3) & m=1,\cr
V(2\lambda_2) & m=2,\cr
V(2\lambda_1+\lambda_3) + V(3\lambda_1) & m=3.
\end{cases}
$$
\qed
\end{corollary}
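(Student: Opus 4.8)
The plan is to transport the problem to the graded Lie algebra $\n := \Gr^W_\dot\p \cong \L(H)/(\w)$, where $\w = \sum_{j=1}^g[a_j,b_j]$, and then to compute its outer derivations weight by weight. Using the isomorphism $\Gr^W_\dot\OutDer\p \cong \OutDer\n$ and the exact sequence $0 \to \n \to \Der\n \to \OutDer\n \to 0$ (valid because $\n$ has trivial center), it suffices to compute, in each weight $-m$,
$$
\Gr^W_{-m}\OutDer\p \cong (\Der\n)_{-m}\big/\n_{-m},
$$
where $\n_{-m}=\Gr^W_{-m}\p$ embeds in $(\Der\n)_{-m}$ as the inner (adjoint) derivations. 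The modules $\n_{-m}$ for $m\le 4$ are supplied by the preceding Proposition; the weight $-5$ piece $\n_{-5}$, which arises as a target when $m=3$, must be computed separately from the presentation.

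First I would describe $(\Der\n)_{-m}$ as the kernel of an explicit $\Sp(H)$-equivariant map. Since $\n$ is generated by $H$ in weight $-1$, a weight $-m$ derivation is determined by its restriction $\phi\colon H \to \n_{-(m+1)}$. Lifting $\phi$ to $\hat\phi\colon H\to \L_{m+1}(H)$ and extending to the unique derivation $\hat D$ of the free Lie algebra $\L(H)$, the map $\phi$ extends to $\n$ exactly when $\hat D(\w)$ lies in the ideal $(\w)$. This yields a well-defined (lift-independent) equivariant obstruction map
$$
\overline{\delta}\colon \Hom(H,\n_{-(m+1)}) \to \n_{-(m+2)},\qquad
\phi \mapsto \sum_{j=1}^g\bigl([\hat\phi(a_j),b_j]+[a_j,\hat\phi(b_j)]\bigr)\bmod(\w),
$$
and $(\Der\n)_{-m}=\ker\overline{\delta}$. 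Using the symplectic self-duality $H\cong H^\ast$, the source is $H\otimes\n_{-(m+1)}$.

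The heart of the argument is then representation theory. By Kabanov stability it suffices to decompose the relevant tensor products in a single stable range, and since $g\ge 3$ lies in the stable range for weights $\le 3$, the resulting answer holds for all $g\ge 3$. For $m=1$ one has $H\otimes\n_{-2} = V(\lambda_1)\otimes V(\lambda_2) \cong V(\lambda_1+\lambda_2)\oplus V(\lambda_3)\oplus V(\lambda_1)$, mapping to $\n_{-3}=V(\lambda_1+\lambda_2)$; by Schur's lemma $\overline{\delta}$ can only be nonzero on the $V(\lambda_1+\lambda_2)$-summand, so once one checks (by evaluating on a highest weight vector) that it is indeed nonzero there, $\ker\overline{\delta}=V(\lambda_3)\oplus V(\lambda_1)$, and quotienting by the inner derivations $\n_{-1}=V(\lambda_1)$ leaves $V(\lambda_3)$. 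The cases $m=2,3$ proceed identically: decompose $H\otimes\n_{-(m+1)}$, identify $\im\overline{\delta}$ inside $\n_{-(m+2)}$, and subtract the inner summand $\n_{-m}$, to obtain $V(2\lambda_2)$ and $V(2\lambda_1+\lambda_3)\oplus V(3\lambda_1)$ respectively.

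The main obstacle is the precise determination of $\ker\overline{\delta}$ (equivalently $\im\overline{\delta}$) in weights $-2$ and $-3$, where $H\otimes\n_{-(m+1)}$ has several constituents and some irreducibles occur with multiplicity greater than one. Schur's lemma pins down $\overline{\delta}$ up to scalars on each isotypic piece, but when a target irreducible occurs in the source with multiplicity $>1$ one must compute the rank of the resulting small scalar matrix to decide how much of that isotypic component survives in the kernel. I would carry this out by evaluating $\overline{\delta}$ on explicit highest weight vectors built from the symplectic basis; for $m=3$ this also requires enough of the decomposition of $\n_{-5}$ to locate $\im\overline{\delta}$. The computation is finite and its bookkeeping is controlled by stability, but isolating the surviving multiplicity-one copies is where the real work lies.
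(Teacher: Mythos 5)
Your outline is correct and is essentially the argument the paper relies on: the paper gives no proof here, simply citing \cite[Cor.~9.4]{hain:torelli}, where $\Der_{-m}\Gr^W_\dot\p$ is identified with the kernel of exactly your obstruction map (which, under symplectic self-duality of $H$, is just the bracket $H\otimes\Gr^W_{-(m+1)}\p\to\Gr^W_{-(m+2)}\p$) and the answer is extracted by the same stability-controlled decomposition into irreducibles followed by quotienting out the inner derivations. The only outstanding work is the explicit highest-weight-vector, multiplicity, and $\Gr^W_{-5}\p$ computations you correctly flag.
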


\section{Completions of Mapping Class Groups}

In this section $g$ and $n$ are non-negative integers satisfying $2g-2+n>0$.

\subsection{Relative completion of mapping class groups}

Let $S$ be a closed oriented surface of genus $g$. Set $H=H_1(S;\Z)$. Fix a
symplectic basis of $H$. The action of $\G_{g,n}$ on $S$ induces a surjective
homomorphism
$$
\rho : \G_{g,n} \to \Aut(H,\bil) \cong \Sp_g(\Z).
$$
Since $\Sp_g(\Z)$ is Zariski dense in $\Sp_g(\Q)$, we have the completion
$\cG_{g,n}$ of $\G_{g,n}$ with respect to $\rho$. It is an extension
$$
1 \to \U_{g,n} \to \cG_{g,n} \to \Sp_g \to 1
$$
of the symplectic group by a prounipotent group. Denote the Lie algebras of
$\cG_{g,n}$ and $\U_{g,n}$ by $\g_{g,n}$ and $\u_{g,n}$, respectively. When
$n=0$, it will be suppressed, so that $\cG_g = \cG_{g,0}$, $\g_g = \g_{g,0}$,
etc.

When $m>0$, the {\em level $m$ subgroup} $\G_{g,n}[m]$ of $\G_{g,n}$ is defined
to be the kernel of the mod $m$ reduction $\rho_m : \G_{g,n} \to \Sp_g(\Z/m)$ of
$\rho$. When $g\ge 3$, the homomorphism $\G_{g,n}[m] \to \cG_{g,n}(\Q)$ is the
completion of $\G_{g,n}$ relative to $\rho : \G_{g,n} \to \Sp_g(\Q)$,
\cite[Prop.~3.3]{hain:torelli}. In Section~\ref{sec:putman} we will see that a
recent result of Putman \cite{putman} implies that this result extends to finite
index subgroups of mapping class groups that contain its Johnson subgroup,
provided that $g\ge 3$. When $g\le 2$ the completion of $\G_{g,n}[m]$ depends
non-trivially on $m$. 

The homomorphism $T_{g,n}\to \U_{g,n}(\Q)$ induces a homomorphism $\T_{g,n} \to
\U_{g,n}$ where $\T_{g,n}$ denotes the unipotent completion of $T_{g,n}$.

\begin{theorem}[\cite{hain:comp},{\cite[Thm.~3.4]{hain:torelli}}]
\label{thm:cent_ext}
If $g\ge 2$, then this homomorphism is surjective. If $g\ge 3$, it has
non-trivial kernel isomorphic to the additive group $\Ga$. The sequence
$$
0 \to \Ga \to \T_{g,n} \to \U_{g,n} \to 1
$$
is a non-trivial central extension.
\end{theorem}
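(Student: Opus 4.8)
The plan is to transport everything to Lie algebras and extract it from the cohomological description of relative completion recalled above. By the equivalence between prounipotent $\Q$-groups and pronilpotent Lie algebras, the three assertions about $\T_{g,n}\to\U_{g,n}$ are equivalent to the statements that the induced morphism $\t_{g,n}\to\u_{g,n}$ is surjective, that its kernel is a one-dimensional central ideal isomorphic to $\Q$, and that the corresponding central extension of Lie algebras is non-split. Throughout I would exploit the group extension $1\to T_{g,n}\to\G_{g,n}\to\Sp_g(\Z)\to1$, noting that $T_{g,n}$ acts trivially on every $\Sp_g$-module $V_\lambda$ pulled back along $\rho$.

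\emph{Surjectivity.} First I would show that $H_1(\t_{g,n})\to H_1(\u_{g,n})$ is an isomorphism; surjectivity of $\t_{g,n}\to\u_{g,n}$, hence of $\T_{g,n}\to\U_{g,n}$, then follows because a continuous homomorphism of pronilpotent Lie algebras that is surjective on abelianizations is surjective. Here $H_1(\u_{g,n})\cong\prod_\lambda[H^1(\G_{g,n},V_\lambda)]^\ast\otimes V_\lambda$, while $H_1(\t_{g,n})=H_1(T_{g,n};\Q)$. The Lyndon five-term sequence of the group extension, together with the vanishing of $H^1(\Sp_g(\Z),V_\lambda)$ for $g\ge3$, identifies $H^1(\G_{g,n},V_\lambda)$ with $[H^1(T_{g,n};\Q)\otimes V_\lambda]^{\Sp_g(\Z)}$; matching multiplicities (and using that both trivial-isotypic pieces vanish, as $H^1(\G_{g,n};\Q)=0$ and $(\Lambda^3_0 H)^{\Sp_g}=0$) gives the isomorphism on $H_1$. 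The case $g=2$, where $T_2$ is infinitely generated, I would treat by the same abelianization/density argument, the kernel assertion not being claimed there.

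\emph{The kernel.} Writing $\k$ for the kernel of $\t_{g,n}\to\u_{g,n}$, the Lie-algebra five-term sequence $H_2(\t_{g,n})\to H_2(\u_{g,n})\to\k/[\t_{g,n},\k]\to H_1(\t_{g,n})\to H_1(\u_{g,n})\to0$ together with the isomorphism on $H_1$ yields $\k/[\t_{g,n},\k]\cong\coker\big(H_2(\t_{g,n})\to H_2(\u_{g,n})\big)$. I would compute this cokernel in the trivial-isotypic component using the degree-$2$ surjection $\prod_\lambda[H^2(\G_{g,n},V_\lambda)]^\ast\otimes V_\lambda\to H_2(\u_{g,n})$ and the corresponding description on the Torelli side; the degree-$2$ five-term sequence shows that the discrepancy between $H^2(\G_{g,n};\Q)$ and the Torelli contribution is exactly the class coming from $H^2(\Sp_g(\Z);\Q)\cong\Q$ (Borel; here $g\ge3$ is essential). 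Hence $\k/[\t_{g,n},\k]$ is the unique trivial summand $V(0)=\Q$, and by the graded Nakayama lemma $\k$ is the ideal generated by a single element $z$ spanning this $V(0)$, which sits in weight $-2$, i.e.\ $z\in\Gr^W_{-2}\t_{g,n}$.

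\emph{Centrality and non-triviality.} It remains to see that this ideal is one-dimensional and central, and that the extension does not split. Centrality amounts to the vanishing of the $\Sp_g$-equivariant bracket $[\,\blank\,,z]\colon\Gr^W_{-1}\t_{g,n}\to\Gr^W_{-3}\t_{g,n}$, a map of the form $V(\lambda_3)=V(\lambda_3)\otimes V(0)\to\Gr^W_{-3}\t_{g,n}$; establishing this vanishing is the main obstacle, since it requires enough control of the weight $-3$ part of $\t_{g,n}$ to rule out a copy of $V(\lambda_3)$ landing there, and this is precisely where the presentation computations of Hain and Sakasai enter. Granting centrality, $\k=\Q z$ exponentiates to a central $\Ga$. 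For non-triviality I would argue that, since $\t_{g,n}$ is generated in weight $-1$, one has $z\in\Gr^W_{-2}\t_{g,n}=[\Gr^W_{-1}\t_{g,n},\Gr^W_{-1}\t_{g,n}]\subseteq[\t_{g,n},\t_{g,n}]$; a split central extension $\t_{g,n}\cong\u_{g,n}\oplus\Q z$ would force $[\t_{g,n},\t_{g,n}]\subseteq\u_{g,n}$ and hence $\k\cap[\t_{g,n},\t_{g,n}]=0$, contradicting $0\neq z\in\k\cap[\t_{g,n},\t_{g,n}]$. Equivalently, a splitting would enlarge $H_1$ by a copy of $\Q$, contradicting the isomorphism on $H_1$. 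Finally $g\ge3$ is exactly the range in which $H^2(\Sp_g(\Z);\Q)\neq0$, so that $\k\neq0$ and the extension is genuinely present and non-trivial.
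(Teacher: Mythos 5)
The paper itself does not prove this statement --- it quotes it from \cite{hain:comp} and \cite[Thm.~3.4]{hain:torelli} --- so your attempt has to be measured against those proofs. Your skeleton (pass to $\t_{g,n}\to\u_{g,n}$, get surjectivity from an $H_1$ comparison via Raghunathan vanishing, bound the kernel by comparing $H_2$'s, deduce non-splitness from the $H_1$ isomorphism) is the right one, and the surjectivity step, the upper bound $\dim\k\le 1$, and the non-splitness argument are all essentially sound. The genuine gap is the \emph{non-triviality} of the kernel. The cohomological description of $H_2(\u_{g,n})$ is only a continuous surjection $\prod_\lambda [H^2(\G_{g,n},V_\lambda)]^\ast\otimes V_\lambda \to H_2(\u_{g,n})$, not an isomorphism, so the presence of the Borel class in $H^2(\G_{g,n};\Q)$ does not by itself place a trivial summand in $H_2(\u_{g,n})$, and no five-term manipulation converts the group-level discrepancy (the Borel class dies on $T_{g,n}$) into a non-zero cokernel of $H_2(\t_{g,n})\to H_2(\u_{g,n})$. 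What is actually needed is that the Borel class lies in the image of the \emph{injection} $[H^2(\u_g)]^{\Sp_g}\hookrightarrow H^2(\G_g;\Q)$; concretely, that the symplectically contracted cup square of the Johnson class is a \emph{non-zero} multiple of the Borel class in $H^2(\G_g;\Q)$. Equivalently, the unique trivial summand of $\L_2(\Lambda^3_0 H)$ must be a relation in $\u_g$ (this is Proposition~\ref{prop:wt_2}, $\Gr^W_{-2}\u_g\cong V(2\lambda_2)$) while failing to be one in $\t_g$, so that the logarithm of a separating twist survives in $\Gr^W_{-2}\t_g$ and spans the kernel. This is not formal: it is Morita's computation of the second Johnson homomorphism \cite{morita:casson} (the Kawazumi--Morita decomposability of $\lambda_1$), packaged in \cite{hain:comp} via the cycle $C-C^-$, and it is precisely the content of those references. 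Without it, nothing in your argument rules out $\T_{g,n}\to\U_{g,n}$ being an isomorphism: you produce an upper bound for $\k$ but never a non-zero element of it.

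A secondary problem is that you call centrality ``the main obstacle'' and propose to deduce it from control of $\Gr^W_{-3}\t_{g,n}$ via the Hain--Sakasai presentation computations. That has it backwards: centrality is the formal part. The conjugation action $\G\to\Aut(\t)$ extends to $\cG\to\Aut(\t)$ by the universal mapping property of relative completion (the action on $H_1(\t)\cong\Lambda^3_0 H$ factors through $\Sp_g(\Z)\subset\Sp(H)(\Q)$, and automorphisms acting trivially on $H_1(\t)$ form a prounipotent group); the two homomorphisms $\T\to\Aut(\t)$ --- the adjoint action, and the composite $\T\to\cG\to\Aut(\t)$ --- agree on the Zariski-dense image of $T_{g,n}$, hence coincide, so $\k=\ker(\T\to\cG)$ acts trivially on $\t$ and is central. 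Your weight $-3$ route is not only far heavier, it is also dangerously circular in the context of this paper: the genus $3$ weight computations and the Gysin sequence of Corollary~\ref{cor:gysin} take the central extension of Theorem~\ref{thm:cent_ext} as an input, and for $g=3$ ruling out a copy of $V(\lambda_3)$ in $\Gr^W_{-3}\t_3$ is essentially equivalent to the centrality you are trying to prove, not a tool for proving it.
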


The action of $\G_{g,1}$ on $\cP$ induces an action $\G_{g,1} \to \Aut \p$ that
preserves the lower central series filtration. The action of $\G_{g,1}$ on
$\Gr^\dot_\LCS\p$ factors through $\rho : \G_{g,1} \to \Sp(H)$. The universal
mapping property of relative completion, implies that this action induces an
action $\cG_{g,1}\to \Aut \p$. The induced action $\g_{g,1} \to \Der\p$ on Lie
algebras is a tool for understanding $\u_{g,1}$. It induces the outer action
$\g_g \to \OutDer \p$.

\subsection{Mapping class groups as fundamental groups of smooth varieties}

The moduli space $\M_{g,n}$ of smooth complex projective curves of type $(g,n)$
is the quotient of the Teichm\"uller space $\X_{g,n}$ by $\G_{g,n}$, which acts
properly discontinuously on Teichm\"uller space. Although this action is not
fixed point free, it is when restricted to $\G_{g,n}[m]$ when $m\ge 3$. The
quotient $\M_{g,n}[m] := \G_{g,n}[m]\bs \X_{g,n}$ is a smooth quasi projective
variety. Consequently, $\G_{g,n}[m]$ is the fundamental group of a smooth
quasi-projective variety for all $m\ge 3$.

One can realize $\G_{g,n}$ as the fundamental group of a smooth quasi-projective
variety $X$ by fixing $m\ge 3$ and taking
$$
X = \M_{g,n}[m]\times_{\Sp_g(\Z/m)} Y
$$
where $Y$ is a simply connected projective manifold on which $\Sp_g(\Z/m)$ acts
fixed point freely.\footnote{Such varieties $Y$ were constructed by Serre. The
argument can be found in \cite[IX,\S4.2]{shafarevich}.} Alternatively, when
$g+n>3$ and $g\ge 2$, one can take $X$ to be the complement of the locus in
$\M_{g,n}$ of $n$-pointed curves with a non-trivial automorphism (cf.\
\cite[Prop.~4.1]{hain:jams}).

Define $\Ghat_{g,n}[m]$ to be the orbifold fundamental group of $\cC_g^n[m]$,
the $n$th power of the universal curve over $\M_g[m]$. It is an extension
$$
1 \to \pi^n \to \Ghat_{g,n}[m] \to \G_g[m] \to 1,
$$
where $\pi$ denotes the fundamental group of a smooth projective curve of genus
$g$. Since $\M_{g,n}[m]$ is a Zariski open subset of $\cC_g^n[m]$,
$\Ghat_{g,n}[m]$ is a quotient of $\G_{g,n}[m]$.

Since $\cC_g^n[m]$ is a smooth quasi-projective variety when $m\ge 3$,
$\Ghat_{g,n}[m]$ is the fundamental group of a smooth quasi-projective variety
for all $m \ge 1$. (Use the trick above.)

\subsubsection{Associated Hodge theory}

The fact that mapping class groups are fundamental groups of smooth varieties
allows us to study their relative completions via Hodge theory.

Suppose that $\G$ is one of the mapping class groups $\G_{g,n}$ or
$\Ghat_{g,n}$. Then $\G$ is the fundamental group of a pointed smooth variety
$(\M,x)$, where $\M$ is one of the smooth varieties described above. Denote its
completion relative to the homomorphism $\rho : \G \to \Sp(H)$ by $\cG$ and it
prounipotent radical by $\U$. One has a family $f:\cC \to \M$ of complete genus
$g$ curves. The representation $\rho$ is the monodromy representation of the
local system $R^1f_\ast \Q$, which is a polarized variation of Hodge structure.
The Lie algebras $\g$ of $\cG$ and $\u$ of $\U$ are Lie algebras in the category
of pro-mixed Hodge structures.

The Torelli subgroup $T$ of $\G$ is the kernel of $\rho : \G \to \Sp(H)$. Denote
the Lie algebra of its unipotent completion by $\t$.

The following result summarizes several results from Sections~3 and 4 of
\cite{hain:torelli}.

\begin{proposition}
\label{prop:hodge_mcg}
If $g\ge 3$, then
\begin{enumerate}

\item the MHS on $\u$ can be lifted to a MHS on $\t$ so that the central
extension
\begin{equation}
\label{eqn:cent_extn}
0 \to \Q(1) \to \t \to \u \to 0
\end{equation}
is a non-trivial central extension of Lie algebras in the category of pro-MHS.
Consequently, the associated graded sequence
$$
0 \to \Q(1) \to \Gr^W_\dot \t \to \Gr^W_\dot \u \to 0
$$
is an exact sequence of graded Lie algebras in the category of $\Sp(H)$ modules.

\item the weight filtrations of $\u$ and $\t$ are their lower central series:
$$
W_{-m} \u = L^m \u \text{ and } W_{-m} \t = L^m \t.
$$ 
In particular, $H_1(\u) =  H_1(\t) = \Gr^W_{-1}\u = \Gr^W_{-1}\t$.

\item $H_1(\u_{g,n}) \cong V(\lambda_3) + V(\lambda_1)^n$.

\end{enumerate}
\end{proposition}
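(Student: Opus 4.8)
The plan is to establish the three assertions in the order (iii), (ii), (i): the computation of $H_1(\u)$ feeds the purity argument of (ii), and (ii) in turn underlies the identification of the central kernel in (i).

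For (iii) I would use the cohomological description of the abelianization of the prounipotent radical recorded above,
$$
H_1(\u_{g,n}) \cong \bigoplus_{\lambda \in \Rdual} [H^1(\G_{g,n},V_\lambda)]^\ast \otimes V_\lambda,
$$
which is available because $\Sp_g$ has absolutely irreducible representations, the groups $H^j(\G_{g,n},V)$ are finite dimensional for $j\le 2$, and (\ref{eqn:homom}) is an isomorphism in degree $1$. It then remains to compute $H^1(\G_{g,n},V_\lambda)$ for each nontrivial $\lambda$. Feeding the extension $1\to T_{g,n}\to\G_{g,n}\to\Sp_g(\Z)\to 1$ into the Hochschild--Serre spectral sequence, using Borel vanishing $H^1(\Sp_g(\Z),V_\lambda)=0$ for $g\ge 3$ together with the Johnson isomorphism $H_1(T_g;\Q)\cong\Lambda^3_0 H = V(\lambda_3)$, gives $H^1(\G_g,V_\lambda)\cong\Hom_{\Sp(H)}(V_\lambda,V(\lambda_3))$, which is $\Q$ for $\lambda=\lambda_3$ and $0$ otherwise (the trivial representation contributes nothing since $H^1(\G_g;\Q)=0$). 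The $n$ marked points contribute, through their point-pushing subgroups, one copy of $V(\lambda_1)\cong H$ each. Self-duality of symplectic irreducibles then yields $H_1(\u_{g,n})\cong V(\lambda_3)+V(\lambda_1)^n$.

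For (ii), the corollary to the main Hodge-theoretic theorem of Section~\ref{sec:hodge} gives $\u = W_{-1}\g$, so $\u$ has weights $\le -1$; since the bracket is a morphism of MHS and $\u=W_{-1}\u$, this already yields $L^m\u\subseteq W_{-m}\u$. For the reverse inclusion I would show that $H_1(\u)$ is pure of weight $-1$. Granting this, exactness of $\Gr^W_\dot$ gives that $H_1(\Gr^W_\dot\u)=\Gr^W_\dot H_1(\u)$ is concentrated in weight $-1$, so the graded Lie algebra $\Gr^W_\dot\u$ is generated in weight $-1$; hence every element of $\Gr^W_{-m}\u$ is a sum of length-$m$ brackets of weight $-1$ elements, forcing $W_{-m}\u=L^m\u$. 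Purity of $H_1(\u)$ is where the Hodge refinement of (iii) is used: the generators are realized inside $H^1$ of the variations $\V(\lambda_3)$ and $\V(\lambda_1)$ of weights $3$ and $1$, and Deligne's weight theorem quoted just before Theorem~\ref{thm:u_quad} identifies these summands, after the dualization in the displayed formula, with the pure Hodge structures $\Lambda^3_0 H(2)$ and $H(1)$, both of weight $-1$.

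For (i) I would begin with Theorem~\ref{thm:cent_ext}, which supplies for $g\ge 3$ the nontrivial central extension of proalgebraic groups $0\to\Ga\to\T_{g,n}\to\U_{g,n}\to 1$; passing to Lie algebras produces a central extension $0\to\Q\to\t\to\u\to 0$ with one-dimensional kernel, the MHS on $\u$ being the one from Section~\ref{sec:hodge}. Such an extension is classified by a class in $H^2(\u)$, and nontriviality (Theorem~\ref{thm:cent_ext}) forces this to be a nonzero cocycle, so that $\Q\subseteq[\t,\t]$ and $H_1(\t)=H_1(\u)$. The essential point, and the step I expect to be the main obstacle, is to show that the kernel is the Tate structure $\Q(1)$ rather than an arbitrary one-dimensional MHS: one must verify that the extension class is of Hodge type $(-1,-1)$, i.e.\ of weight $-2$. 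Here I would use the geometric origin of the central $\Ga$: it is detected by the cycle/polarization class measuring the failure of the Johnson homomorphism to surject onto $H_2$, which is algebraic and hence of $(1,1)$-type; this both produces the MHS lift to $\t$ and pins the kernel to $\Q(1)$. Applying the exact functor $\Gr^W_\dot$ to $0\to\Q(1)\to\t\to\u\to 0$ then gives the asserted exact sequence of graded Lie algebras in the category of $\Sp(H)$-modules, and since $\Q(1)$ has weight $-2$ the weight-filtration statement of (ii) for $\t$ follows from that for $\u$, with the central $\Q(1)$ lying in $W_{-2}\t=L^2\t$.
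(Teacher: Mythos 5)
The paper gives no proof of this proposition: it is stated as a summary of results from Sections~3 and~4 of \cite{hain:torelli} (together with \cite{hain:comp}), so any reconstruction is necessarily compared against the cited sources rather than an argument in the text. Your outline does follow the same general strategy as those sources --- compute $H_1(\u_{g,n})$ from $H^1(\G_{g,n},V_\lambda)$, deduce $W_{-m}=L^m$ from purity and generation in weight $-1$, and identify the central kernel Hodge-theoretically --- and part (iii), together with the formal half of (ii) (that $L^m\u\subseteq W_{-m}\u$ and that purity of $H_1(\u)$ forces equality), is essentially right; the cohomology computation you sketch is exactly what Proposition~\ref{prop:putman_mcg} records from \cite{hain:normal}.

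Two steps are genuinely gapped. First, your purity argument for $H_1(\u)$ appeals to the Deligne--Zucker weight statement quoted before Theorem~\ref{thm:u_quad}, but that gives purity of $H^j(X,\V)$ only for $X$ \emph{compact}; the relevant moduli spaces are quasi-projective, so $H^1(\M,\V)$ a priori carries weights $m+1$ and $m+2$ and purity is a theorem, not a formality. The argument that actually closes this (as in \cite{hain:torelli}) is that $W_\dot H_1(\u_g)$ is a filtration by $\Sp(H)$-submodules of the irreducible module $V(\lambda_3)$, so $H_1(\u_g)$ is pure of a single weight, and that weight is $-1$ because the Johnson homomorphism is a nonzero morphism of MHS onto $\Gr^W_{-1}\OutDer\p$. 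Second, part (i) is where the substantive content lies and your sketch asserts rather than proves it: passing from the group extension of Theorem~\ref{thm:cent_ext} to a central extension of Lie algebras \emph{in the category of pro-MHS} requires first constructing a MHS on $\t$ compatible with the one on $\u$, which does not come for free from Section~\ref{sec:hodge} since the Torelli group is not visibly the fundamental group of an algebraic variety; and the claim that the kernel is $\Q(1)$ because the extension class is ``algebraic, hence of type $(1,1)$'' presupposes the very identification at issue. Once a lift exists, the kernel is pinned down more cheaply: it is a one-dimensional sub-MHS of $L^2\t\subseteq W_{-2}\t$, and a one-dimensional pure $\Q$-Hodge structure of weight $-2$ is forced by conjugation symmetry to be of type $(-1,-1)$, i.e.\ $\Q(1)$, consistent with the $\Q(-1)$ in Corollary~\ref{cor:gysin}. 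But the existence of the lift itself is the content of \cite{hain:comp} and \cite[\S4]{hain:torelli} and cannot be waved through.
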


The spectral sequence of the central extension gives a Gysin sequence.

\begin{corollary}
\label{cor:gysin}
For all $g\ge 3$ there is a Gysin sequence
\begin{multline*}
0 \to \Q(-1) \to H^2(\u_g) \to H^2(\t_g) \to H^1(\u_g)(-1)\cr
 \to H^3(\u_g) \to H^3(\t_g) \to H^2(\u_g)(-1) \to \dots
\end{multline*}
in the category of mixed Hodge structures. It remains exact after applying
$\Gr^W_m$ for all $m\in \Z$.
\end{corollary}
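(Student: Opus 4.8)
The plan is to apply the Lyndon--Hochschild--Serre spectral sequence for Lie algebra cohomology to the central extension
$$
0 \to \Q(1) \to \t_g \to \u_g \to 0
$$
supplied by Proposition~\ref{prop:hodge_mcg}(i), exploiting the fact that the central ideal is one dimensional. Since this is an extension of Lie algebras in the category of pro-MHS, the filtration of the Chevalley--Eilenberg complex of $\t_g$ by powers of the ideal $\Q(1)$ is a filtration by sub-Ind-MHS, so the resulting spectral sequence
$$
E_2^{p,q} = H^p\big(\u_g, H^q(\Q(1))\big) \Longrightarrow H^{p+q}(\t_g)
$$
is a spectral sequence in the category of MHS whose differentials are morphisms of MHS. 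Because $\Q(1)$ is abelian, central and one dimensional, $\u_g$ acts trivially on $H^\dot(\Q(1)) = \Lambda^\dot \Q(-1)$, and this cohomology is concentrated in degrees $0$ and $1$, with $H^0(\Q(1)) = \Q$ and $H^1(\Q(1)) = \Q(-1)$. Hence only the rows $q=0$ and $q=1$ are nonzero, and
$$
E_2^{p,0} = H^p(\u_g), \qquad E_2^{p,1} = H^p(\u_g)(-1).
$$

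A spectral sequence with only two nonzero rows degenerates at $E_3 = E_\infty$ and unwinds into a single long exact sequence, the Gysin sequence, whose connecting maps are the differentials $d_2 : E_2^{p,1} \to E_2^{p+2,0}$, that is
$$
d_2 : H^p(\u_g)(-1) \to H^{p+2}(\u_g).
$$
Each such $d_2$ is cup product with the class $e \in H^2(\u_g)(1)$ classifying the extension, and the Tate twists match since $(-1)+(1)=0$; as the extension lives in the category of pro-MHS, $e$ --- and hence every $d_2$ --- is a morphism of MHS. Splicing the short exact sequences $0 \to E_\infty^{n,0} \to H^n(\t_g) \to E_\infty^{n-1,1} \to 0$ across the $d_2$'s yields exactly the asserted sequence of MHS. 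The one place where the hypotheses of Proposition~\ref{prop:hodge_mcg} are needed beyond mere existence is the left-hand end: the edge map $d_2 : E_2^{0,1} = \Q(-1) \to E_2^{2,0} = H^2(\u_g)$ is multiplication by $e$ on a one dimensional space, so it is injective precisely because the central extension is \emph{nontrivial}, i.e.\ $e \ne 0$. This injectivity forces $H^1(\t_g) \cong H^1(\u_g)$ and produces the initial segment $0 \to \Q(-1) \to H^2(\u_g)$.

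Finally, every term in the sequence is an MHS (or Ind-MHS) whose weight graded quotients are finite dimensional, so applying $\Gr^W_m$ preserves exactness by the exactness of $\Gr^W_\dot$ on the category of MHS. I expect the only genuine work to lie in verifying that the spectral sequence really lives in the category of MHS --- that the $E_2$-page carries the Tate twist $(-1)$ on the row $q=1$ and that $d_2$ is a morphism of Hodge structures --- and in handling the pro-structure: since $\t_g$ and $\u_g$ are pronilpotent, one must confirm that the Chevalley--Eilenberg computation of continuous cohomology commutes with the relevant direct limits so that the Hochschild--Serre construction applies. Once this bookkeeping is in place, the two-row collapse is formal.
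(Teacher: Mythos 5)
Your argument is correct and is exactly the paper's (essentially one-line) proof: the paper derives the Gysin sequence from the Hochschild--Serre spectral sequence of the central extension $0 \to \Q(1) \to \t_g \to \u_g \to 0$ of Proposition~\ref{prop:hodge_mcg}, with the two-row collapse, the identification $E_2^{p,1}=H^p(\u_g)(-1)$, the nontriviality of the extension class giving injectivity at the left end, and exactness of $\Gr^W_\dot$ handling the last assertion, just as you describe.
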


\subsection{Variation on a theme of Putman}
\label{sec:putman}

Here we recall a result of Putman and extract some useful consequences from it.
As in the introduction, $K_{g,n}$ denotes the subgroup of $T_{g,n}$ generated by
Dehn twists on bounding simple closed curves (BSCCs).\footnote{Recall that a
{\em bounding pair} in a surface $S$ consists of two disjoint non-separating
simple closed curves $B_+$ and $B_-$ which together divide the surface into two
components. A {\em bounding pair map} (BP map) consists of the product
$\smash{t_{B_+}t_{B_-}^{-1}}$ of a positive Dehn twist about $B_+$ and a
negative Dehn twist about $B_-$.} The following result is a special case of a
result \cite[Thm.~A]{putman} of Putman.

\begin{theorem}[Putman]
Suppose that $g\ge 3$ and that $n\ge 0$. If $S$ is a finite index subgroup of
$T_{g,n}$ that contains $K_{g,n}$, then the inclusion induces an isomorphism
$H_1(S,\Q) \to H_1(T_{g,n},\Q)$.
\end{theorem}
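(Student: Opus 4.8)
The statement asserts that for a finite index subgroup $S$ of $T_{g,n}$ containing the Johnson subgroup $K_{g,n}$, the inclusion $S \hookrightarrow T_{g,n}$ induces an isomorphism $H_1(S,\Q) \to H_1(T_{g,n},\Q)$ when $g \ge 3$. Since this is stated as a special case of Putman's Theorem~A, the plan is \emph{not} to reprove Putman's result from scratch but to explain how the asserted special case is extracted from it. The plan is to first recall precisely what Putman's theorem says in its general form, identify the hypotheses we are imposing, and then verify that these hypotheses place us inside the regime where Putman's conclusion applies.

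First I would recall the general mechanism behind results of this type. The Johnson homomorphism gives the canonical $\Sp_g(\Z)$-equivariant isomorphism $H_1(T_{g,n};\Q) \cong \Lambda^3_0 H$ (for $g\ge 3$, where $T_{g,n}$ is finitely generated by Johnson). The key point is that the Dehn twists on bounding simple closed curves, which generate $K_{g,n}$, act \emph{trivially} under the Johnson homomorphism (BSCC maps lie in its kernel in the relevant sense), yet Putman's theorem shows that the classes coming from $K_{g,n}$ together with the finite-index condition are enough to force $H_1(S,\Q)$ to surject onto, and in fact match, $H_1(T_{g,n},\Q)$. The essential content of Putman's argument is a description of $H_1$ of such subgroups as an $\Sp_g(\Z)$-representation, controlled by the same stable coefficient system that governs $\Lambda^3_0 H$. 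So the step I would carry out is to check that the containment $K_{g,n} \subseteq S$ supplies exactly the generating classes Putman needs and that finite index guarantees the two homology groups have the same rank.

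Concretely, the steps in order are: (i) state Putman's Theorem~A in the form he proves it, taking care to match his notation and hypotheses to ours ($g \ge 3$, $n \ge 0$, $S$ finite index, $K_{g,n} \subseteq S$); (ii) observe that the restriction map $H_1(S,\Q) \to H_1(T_{g,n},\Q)$ is the one induced by inclusion, and that it is automatically $\Sp_g(\Z)$-equivariant with respect to the conjugation actions; (iii) invoke Putman's computation to conclude that this map is an isomorphism. The genuinely substantive input is Putman's, so this ``proof'' is really a translation and a citation.

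The main obstacle I anticipate is \emph{bookkeeping rather than mathematics}: Putman's theorem is stated for a specific class of subgroups, and one must confirm that ``finite index and containing $K_{g,n}$'' is precisely (or at least a subcase of) the class to which his theorem applies, and that his conventions for $T_{g,n}$ (closed versus punctured, and the treatment of the $n$ marked points) agree with the conventions fixed earlier in this excerpt. In particular I would double-check that the isomorphism $H_1(T_{g,n};\Q)\cong\Lambda^3_0 H$ used throughout is compatible with Putman's normalization, so that equivariance and the surjectivity/injectivity on each isotypic component can be read off directly. Once the hypotheses are aligned, the conclusion is immediate from \cite[Thm.~A]{putman}, and no further independent argument is required.
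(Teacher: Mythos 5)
Your proposal matches the paper exactly: the paper offers no independent argument for this statement, presenting it purely as a special case of Putman's Theorem~A with a citation and no proof. Your additional remarks about aligning hypotheses and conventions are sensible bookkeeping but do not constitute a different route.
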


Denote the image of $K_{g,n}$ in  $\Ghat_{g,n}$ by $\Khat_{g,n}$.

\begin{corollary}
Suppose that $g\ge 3$ and that $n\ge 0$. If $S$ is a finite index subgroup of
$\That_{g,n}$ that contains $\Khat_{g,n}$, then the inclusion induces an
isomorphism $H_1(S,\Q) \to H_1(\That_{g,n},\Q)$.
\end{corollary}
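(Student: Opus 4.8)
The plan is to deduce this from Putman's theorem by pulling $S$ back to the Torelli group $T_{g,n}$ itself. Since $\Ghat_{g,n}$ is a quotient of $\G_{g,n}$ compatibly with the two homomorphisms to $\Sp_g$, the induced map on Torelli subgroups $q : T_{g,n} \to \That_{g,n}$ is surjective and satisfies $q(K_{g,n}) = \Khat_{g,n}$. I would set $N := \ker q$ and $S' := q^{-1}(S)$. Then $S'$ is a finite index subgroup of $T_{g,n}$, with $[T_{g,n}:S'] = [\That_{g,n}:S]$, and since $q(K_{g,n}) = \Khat_{g,n} \subseteq S$ we have $K_{g,n} \subseteq S'$. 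Thus Putman's theorem applies to $S'$ and yields an isomorphism
$$
\alpha : H_1(S',\Q) \xrightarrow{\ \sim\ } H_1(T_{g,n},\Q).
$$
Moreover $N \subseteq S'$ and $q$ restricts to a surjection $S' \to S$ with kernel $N$, so we obtain two group extensions with a common kernel,
$$
1 \to N \to T_{g,n} \to \That_{g,n} \to 1
\quad\text{and}\quad
1 \to N \to S' \to S \to 1,
$$
related by the inclusions $N = N$, $S' \hookrightarrow T_{g,n}$ and $S \hookrightarrow \That_{g,n}$.

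Next I would feed these two extensions into the five-term exact sequence in rational homology and compare them. Writing $M := H_1(N,\Q)$ with its conjugation action, the two sequences end in
$$
M_S \to H_1(S',\Q) \to H_1(S,\Q) \to 0
\quad\text{and}\quad
M_{\That_{g,n}} \to H_1(T_{g,n},\Q) \to H_1(\That_{g,n},\Q) \to 0,
$$
where $M_S$ and $M_{\That_{g,n}}$ denote the respective coinvariants. The inclusions assemble these into a commutative ladder whose middle vertical map is $\alpha$, whose right vertical map is the map $j : H_1(S,\Q) \to H_1(\That_{g,n},\Q)$ we wish to show is an isomorphism, and whose left vertical map is the canonical surjection $M_S \twoheadrightarrow M_{\That_{g,n}}$ (the $S$-action being the restriction of the $\That_{g,n}$-action, so the latter coinvariants are a further quotient of the former).

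With the ladder in place the conclusion is a short diagram chase. Surjectivity of $j$ is immediate, since $j$ composed with the surjection $H_1(S',\Q)\to H_1(S,\Q)$ equals the surjection $H_1(T_{g,n},\Q)\to H_1(\That_{g,n},\Q)$ precomposed with the isomorphism $\alpha$. For injectivity, the five-term sequences identify $\ker\big(H_1(S',\Q)\to H_1(S,\Q)\big)$ with the image of $M_S$ and $\ker\big(H_1(T_{g,n},\Q)\to H_1(\That_{g,n},\Q)\big)$ with the image of $M_{\That_{g,n}}$; using commutativity of the left square together with surjectivity of $M_S \twoheadrightarrow M_{\That_{g,n}}$, one checks that $\alpha$ carries the former kernel onto the latter, and chasing an element of $\ker j$ back through $\alpha$ then forces it to vanish.

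The argument is essentially formal once Putman's theorem is available, and notably it never requires identifying the kernel $N$. The main point to be careful about — and the only real content beyond bookkeeping — is the left-hand vertical map: one must verify that the conjugation actions of $S$ and of $\That_{g,n}$ on $H_1(N,\Q)$ are compatible (the former being the restriction of the latter), so that the comparison of coinvariants $M_S \twoheadrightarrow M_{\That_{g,n}}$ is genuinely surjective and the left square commutes. Everything else follows from naturality of the Lyndon--Hochschild--Serre five-term sequence.
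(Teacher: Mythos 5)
Your proof is correct, and its first half coincides with the paper's: both arguments pull $S$ back to the finite index subgroup $S' = q^{-1}(S)$ of $T_{g,n}$, check that it contains $K_{g,n}$, and apply Putman's theorem to obtain the isomorphism $H_1(S';\Q) \to H_1(T_{g,n};\Q)$. Where you diverge is in the descent from $S'$ to $S$. The paper works with the same commutative square but closes the argument by invoking the non-formal fact that the right-hand vertical map $H_1(T_{g,n};\Q) \to H_1(\That_{g,n};\Q)$ is itself an isomorphism, which it justifies by citing \cite[Prop.~5.2]{hain:normal} together with a spectral sequence argument for the $n$th power of the universal curve over Torelli space; the conclusion then follows from a short chase using surjectivity of the left vertical map. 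Your replacement of that input by a comparison of the two five-term exact sequences attached to the extensions with common kernel $N = \ker q$ is purely formal: the only thing you need beyond Putman is surjectivity of the induced map on coinvariants $H_1(N;\Q)_S \to H_1(N;\Q)_{\That_{g,n}}$, which holds automatically since the $S$-action is the restriction of the $\That_{g,n}$-action, and your identification of the two kernels via $\alpha$ is a correct use of exactness at the middle term. So your argument never needs to identify $N$ or to know that $H_1(T_{g,n};\Q) \to H_1(\That_{g,n};\Q)$ is injective. This makes it more self-contained and more general --- it would apply verbatim to any surjection of groups fitting this pattern --- at the cost of not recording the (true, and independently useful) fact that $T_{g,n} \to \That_{g,n}$ induces an isomorphism on rational $H_1$, which is what the paper's route establishes along the way.
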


\begin{proof}
Observe that $\That_{g,n}$ is the fundamental group of the $n$th power of the
universal curve over Torelli space $BT_g$ and that $T_{g,n}$ is the fundamental
group of the Zariski open subset obtained by removing the ``diagonals''. It
follows that $T_{g,n}\to \That_{g,n}$ is surjective. Consequently, the inverse
image $S'$ of $S$ in $T_{g,n}$ is a finite index subgroup that contains
$K_{g,n}$. Now consider the diagram:
$$
\xymatrix{
S' \ar[r]\ar[d] & T_{g,n} \ar[d] \cr
S \ar[r] & \That_{g,n}
}
$$
Since the left-hand vertical arrow is surjective, it induces a surjection on
$H_1$.  Putman's result implies that top map induces an isomorphism on rational
$H_1$. The result follows as the right-hand vertical map induces an isomorphism
on rational $H_1$, which follows from \cite[Prop.~5.2]{hain:normal} and an easy
spectral sequence argument for $n$th power of the universal curve over $\T_g$.
\end{proof}

Putman's result has the following analogue for mapping class groups.

\begin{proposition}
\label{prop:putman_mcg}
Suppose that $g\ge 3$ and that $n\ge 0$. If $\G$ is a finite index subgroup of
$\G_{g,n}$ (resp.\ $\Ghat_{g,n}$) that contains $K_{g,n}$ (resp.\
$\Khat_{g,n}$), then for all $\Sp_g$-modules $V$, the map
$$
H^\dot(\G_{g,n},V) \to H^\dot(\G,V)
$$
induced by the inclusion $\G \to \G_{g,n}$ is an isomorphism in degrees $\le 1$
and an injection in degree $\ge 2$. In particular,
$$
H^1(\G,V(\lambda) \cong
\begin{cases}
\Q &  \lambda = \lambda_3,\cr
\Q^n & \lambda = \lambda_1,\cr
0 & \text{otherwise.}
\end{cases}
$$
\end{proposition}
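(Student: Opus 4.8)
The plan is to reduce the whole statement to a comparison of two Hochschild--Serre spectral sequences, in which the fibre row is controlled by Putman's theorem and the base row by vanishing theorems for arithmetic subgroups of $\Sp_g$. First I would dispose of the high-degree assertion for free: since $\G$ has finite index in $\G_{g,n}$ and $V$ is a $\Q$-vector space, the transfer shows that $\tfrac{1}{[\G_{g,n}:\G]}\,\mathrm{cor}$ splits restriction, so $H^\dot(\G_{g,n},V)\to H^\dot(\G,V)$ is injective in \emph{every} degree. It therefore remains only to prove surjectivity in degrees $0$ and $1$. I would set $T=\G\cap T_{g,n}$ and $\Lambda=\rho(\G)$. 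Because $K_{g,n}\subseteq T_{g,n}$, the hypothesis $K_{g,n}\subseteq\G$ forces $K_{g,n}\subseteq T$, with $T$ of finite index in $T_{g,n}$ and $\Lambda$ of finite index, hence Zariski dense, in $\Sp_g(\Z)$. The inclusion $\G\hookrightarrow\G_{g,n}$ is then a morphism from $1\to T\to\G\to\Lambda\to 1$ to $1\to T_{g,n}\to\G_{g,n}\to\Sp_g(\Z)\to 1$, and so induces a map of the associated spectral sequences converging to the restriction on cohomology.

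Degree $0$ is immediate from Zariski density: $H^0(\G,V)=V^{\Lambda}=V^{\Sp_g}=V^{\Sp_g(\Z)}=H^0(\G_{g,n},V)$. For degree $1$ I would inspect the low-degree entries. Since $T$ and $T_{g,n}$ act trivially on $V$ one has $H^1(T,V)=H^1(T,\Q)\otimes V$ (and likewise over $T_{g,n}$), and Putman's theorem identifies the restriction $H^1(T_{g,n},\Q)\to H^1(T,\Q)$ with the dual of an isomorphism of $\Sp_g(\Z)$-modules, both copies of $\Lambda^3_0 H$. Tensoring with $V$ and taking invariants, Zariski density yields
\[
E_2^{0,1}(\G_{g,n})=\bigl(H^1(T_{g,n},\Q)\otimes V\bigr)^{\Sp_g}\ \xrightarrow{\ \cong\ }\ \bigl(H^1(T,\Q)\otimes V\bigr)^{\Lambda}=E_2^{0,1}(\G).
\]
The rows $E_2^{\dot,0}$ are $H^\dot(\Sp_g(\Z),V)$ and $H^\dot(\Lambda,V)$; here I would invoke the vanishing $H^1(\Lambda,V)=0=H^1(\Sp_g(\Z),V)$ for every rational representation $V$, valid since $g\ge 3$. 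For nontrivial irreducible $V$ this is Raghunathan's vanishing theorem for higher-rank lattices; for the trivial summand it is the finiteness of the relevant abelianizations (property (T)).

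With $E_2^{1,0}=0$ on both sides, the five-term exact sequence collapses to $H^1(-,V)=\ker\bigl(d_2\colon E_2^{0,1}\to E_2^{2,0}\bigr)$ in each case. The remaining point is to match the two kernels: the restriction $E_2^{2,0}(\G_{g,n})=H^2(\Sp_g(\Z),V)\to H^2(\Lambda,V)=E_2^{2,0}(\G)$ is injective, again by transfer, so naturality of $d_2$ together with the isomorphism on $E_2^{0,1}$ forces that isomorphism to carry $\ker d_2$ onto $\ker d_2$. This gives $H^1(\G_{g,n},V)\xrightarrow{\cong}H^1(\G,V)$ and completes the isomorphism in degrees $\le 1$. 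The ``in particular'' clause then follows by specializing to $\G=\G_{g,n}$, where $H_1(\u_{g,n})\cong V(\lambda_3)+V(\lambda_1)^n$ from Proposition~\ref{prop:hodge_mcg} and the cohomological dictionary of the previous section read off $H^1(\G_{g,n},V(\lambda))$. The $\Ghat_{g,n}$ statement is proved verbatim, replacing $T,T_{g,n},K_{g,n}$ by $\G\cap\That_{g,n},\That_{g,n},\Khat_{g,n}$ and using the $\That$-version of Putman's theorem from the preceding corollary.

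The step I expect to be the main obstacle is securing the base-row vanishing $H^1(\Lambda,V)=0$ for \emph{arbitrary} finite-index $\Lambda$, since this is the deep external input that lets the $q=0$ row disappear and makes Putman's $H_1$-computation on the $q=1$ row decisive; the remainder is the formal bookkeeping of two spectral sequences, the transfer, and the Zariski-density identifications of invariants.
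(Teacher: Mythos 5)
Your proof is correct, and its skeleton coincides with the paper's: the transfer gives injectivity of restriction in every degree, Zariski density gives degree $0$, and degree $1$ is attacked through the extension $1 \to \G\cap T_{g,n} \to \G \to \rho(\G) \to 1$, with Putman's theorem controlling the Torelli contribution and Raghunathan's theorem killing $H^1$ of the arithmetic quotient. The one genuine difference is how surjectivity in degree $1$ is closed. The paper never compares $d_2$ differentials: writing $S=\G\cap T_{g,n}$ and $L=\im\{\G\to\Sp_g(\Z)\}$, it quotes from \cite[p.113]{hain:normal} the stronger fact that the edge map $H^1(\G_{g,n},V) \to \Hom_{\Sp_g}(H_1(T_{g,n}),V)$ is an \emph{isomorphism}, identifies the target with $\Hom_L(H_1(S),V)$ via Putman and Zariski density, and then sandwiches: the composite of restriction with the (Raghunathan-supplied) injection $H^1(\G,V)\hookrightarrow \Hom_L(H_1(S),V)$ is an isomorphism, and the second map is injective, so restriction is an isomorphism. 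You instead identify both $H^1$ groups with $\ker d_2$ (using only $E_2^{1,0}=0$ on both sides) and match the two kernels using that $E_2^{2,0}(\G_{g,n})\to E_2^{2,0}(\G)$, i.e.\ $H^2(\Sp_g(\Z),V)\to H^2(\rho(\G),V)$, is injective by transfer. This buys self-containedness: you never need the nontrivial surjectivity of the edge map for the full mapping class group, only formal naturality of the Hochschild--Serre spectral sequence plus the transfer; the paper's route is shorter because that surjectivity is already available in its reference. Two smaller divergences are harmless: your splitting of the vanishing $H^1(\Lambda,V)=0$ into the nontrivial-representation case (Raghunathan) and the trivial case (finite abelianization) is a careful reading of the single citation of \cite{raghunathan}, and your derivation of the ``in particular'' clause from Proposition~\ref{prop:hodge_mcg}(iii) together with the relative-completion dictionary is equivalent to the paper's direct citation of the computation of $H^1(\G_{g,n},V(\lambda))$ in \cite[\S5]{hain:normal}.
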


As in \cite[\S5]{hain:normal}, the vanishing of $H^1(\G,\Q)$ implies that the
Picard group of the corresponding moduli space of curves is finitely generated.

\begin{proof}
We will prove the $\G_{g,n}$ case; the $\Ghat_{g,n}$ case is similar and left to
the reader. Since $V$ is a $\Q$-module, a standard trace argument implies that
the induced map on cohomology is injective in all degrees. Surjectivity in
degree 0 follows from the fact that the image of $\G$ in $\Sp_g(\Q)$ has finite
index in $\Sp_g(\Z)$ and is thus Zariski dense.

To prove that the induced map on cohomology is an isomorphism in degree 1,
write $\G$ as an extension
$$
1 \to \G\cap T_{g,n} \to \G \to L \to 1,
$$
where $L = \im\{\G \to \Sp_g(\Z)\}$. The kernel is a finite index subgroup $S$
of $T_{g,n}$ that contains $K_{g,n}$ and so Putman's theorem implies that
$H^1(S,\Q)=H^1(T_{g,n},\Q)$. Since $g\ge 3$, Raghunathan's vanishing theorem
\cite{raghunathan} implies the vanishing of $H^1(L,V)$, and thus the injectivity
of the restriction map
$$
H^1(\G,V) \to H^0(L,H^1(S)\otimes V) = \Hom_L(H_1(S),V).
$$
On the other hand, there is an isomorphism \cite[p.113]{hain:normal}
$$
H^1(\G_{g,n},V) \to H^0(\Sp_g(\Z),H^1(T_{g,n})\otimes V)
= \Hom_{\Sp_g}(H_1(T_{g,n}),V)
$$
Putman's theorem implies that the restriction mapping $H^1(\G_{g,n},V) \to
H^1(\G,V)$ is an isomorphism.

The last assertion follows from the computation of $H^1(\G_{g,n},V(\lambda))$ in
\cite[\S5]{hain:normal}.
\end{proof}

Combining Putman's theorem and its variants with Lemma~\ref{lem:coho} yields: 

\begin{corollary}
\label{cor:isom_comp}
Suppose that $g\ge 3$ and $n\ge 0$.
\begin{enumerate}

\item If $S$ is a finite index subgroup of $T_{g,n}$ that contains $K_{g,n}$,
then the inclusion of $S$ into $T_{g,n}$ induces an isomorphism on unipotent
completions.

\item If $\G$ is a finite index subgroup of $\G_{g,n}$ that contains $K_{g,n}$,
then the inclusion $\G \to \G_{g,n}$ induces an isomorphism on completions
relative to the standard representation $\rho$.

\item If $S$ is a finite index subgroup of $\That_{g,n}$ that contains
$\Khat_{g,n}$, then the inclusion of $S$ into $T_{g,n}$ induces an isomorphism
on unipotent completions.

\item If $\G$ is a finite index subgroup of $\Ghat_{g,n}$ that contains
$\Khat_{g,n}$, then the inclusion $\G \to \Ghat_{g,n}$ induces an isomorphism on
completions relative to the standard representation $\rho$.

\end{enumerate}
\end{corollary}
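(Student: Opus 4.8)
The plan is to deduce all four statements from Lemma~\ref{lem:coho}, which converts the cohomological comparison furnished by Putman's theorem and its variants into an isomorphism of completions. In each case the relevant inclusion $\phi$ is of finite index, so $\rho\circ\phi$ has image of finite index in $\Sp_g(\Z)$ and is therefore Zariski dense in $\Sp_g(\Q)$; the only remaining input is the cohomological hypothesis of Lemma~\ref{lem:coho}, namely that restriction is an isomorphism in degree $1$ and an injection in degree $2$ for every rational representation of the relevant reductive group.

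For parts (ii) and (iv) the reductive group is $R=\Sp_g$, and the required cohomological hypothesis is exactly the content of Proposition~\ref{prop:putman_mcg}: for $\G$ a finite index subgroup of $\G_{g,n}$ (resp.\ $\Ghat_{g,n}$) containing $K_{g,n}$ (resp.\ $\Khat_{g,n}$), the restriction $H^\dot(\G_{g,n},V)\to H^\dot(\G,V)$ is an isomorphism in degrees $\le 1$ and an injection in degree $2$ for all $\Sp_g$-modules $V$. Feeding this into Lemma~\ref{lem:coho} with $\phi$ the inclusion immediately yields that the induced map on completions relative to $\rho$ is an isomorphism, which is what parts (ii) and (iv) assert.

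For parts (i) and (iii) I would take $R$ to be trivial, so that relative completion specializes to unipotent completion and the hypothesis of Lemma~\ref{lem:coho} need only be checked for the trivial coefficient module $\Q$. In degree $1$, $H^1(-,\Q)$ is the $\Q$-dual of $H_1(-,\Q)$, which is finite dimensional since $T_{g,n}$ is finitely generated (and $\That_{g,n}$ is a quotient of $T_{g,n}$); Putman's theorem (for part (i)) and its $\That_{g,n}$-analogue (the corollary preceding Proposition~\ref{prop:putman_mcg}, for part (iii)) assert that the inclusion induces an isomorphism on $H_1(-,\Q)$, and dualizing gives the desired degree-$1$ isomorphism on $H^1(-,\Q)$. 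In degree $2$ the required injectivity comes from the standard transfer argument used in the proof of Proposition~\ref{prop:putman_mcg}: because the subgroup has finite index and we use $\Q$-coefficients, the composite $\operatorname{cor}\circ\operatorname{res}$ is multiplication by the index and hence an isomorphism, so restriction is injective in every degree. Lemma~\ref{lem:coho}, applied with $R$ trivial, then gives the isomorphism on unipotent completions.

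The arguments are thus a formal assembly of results already established, and I do not anticipate a serious obstacle. The one point requiring care is that Putman's input controls only $H_1$, so in the unipotent cases (i) and (iii) the degree-$2$ injectivity must be supplied independently by the transfer argument; this is precisely the step that uses both the finiteness of the index and the passage to rational coefficients, and it is what allows control of $H_1$ alone to propagate to an isomorphism of the full pronilpotent Lie algebras.
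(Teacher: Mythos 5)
Your proposal is correct and follows exactly the route the paper intends: the paper's entire proof is the single line ``Combining Putman's theorem and its variants with Lemma~\ref{lem:coho} yields,'' and your assembly --- Proposition~\ref{prop:putman_mcg} for the relative cases, and Putman's theorem plus the transfer argument for degree-$2$ injectivity with trivial coefficients in the unipotent cases --- is precisely the intended instantiation of that lemma.
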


There are interesting and natural examples of moduli spaces whose fundamental
groups do not contain the Torelli group, but do contain its Johnson subgroup.

\begin{proposition}
When $g\ge 3$,
\begin{enumerate}

\item (Boggi \cite{boggi}) the fundamental group of the ``Prym-level'' moduli
spaces  \cite{looijenga,boggi-pikaart} (these are called the ``Prym-level
mapping class groups'') of type $(g,n)$ contain $K_{g,n}$,

\item the fundamental group of the moduli space of genus $g$ curves with
a $(2g-2)$nd root of their canonical bundle contains $K_g$.

\end{enumerate}
\end{proposition}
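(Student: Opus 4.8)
The plan is to handle the two assertions by quite different means. Assertion~(i) is a theorem of Boggi \cite{boggi}, so I would invoke it rather than reprove it, but it is worth recording the mechanism. A Prym-level mapping class group is, by construction, the stabilizer in $\G_{g,n}$ of the action on $H_1(\tilde S;\Z/m)$ attached to a fixed finite abelian cover $\tilde S \to S$ (classified by $H_1(S;\Z/\ell)$). A BSCC map $t_c$ automatically lies in the level-$\ell$ subgroup $\G_{g,n}[\ell]$ that preserves $\tilde S$, since $t_c\in T_{g,n}$ acts trivially on $H_1(S;\Z)$ and hence on the deck group; its lift to $\tilde S$ is the product of the Dehn twists on the components of the preimage of $c$. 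The content of Boggi's theorem is that this product acts trivially on the relevant Prym homology, so that $t_c$ lies in the Prym-level group.

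For assertion~(ii) I would argue directly. Write $r=2g-2$ and let $\M_g^{1/r}$ denote the moduli space of pairs $(C,L)$ with $L^{\otimes r}\cong K_C$. Since $r\mid \deg K_C$ these roots exist, and the forgetful map $\M_g^{1/r}\to\M_g$ is a finite orbifold covering whose fiber over $C$ is a torsor under $\Jac(C)[r]\cong H_1(C;\Z/r)$. Fixing a root $L_0$, the orbifold fundamental group of the component of $\M_g^{1/r}$ through $(C,L_0)$ is the stabilizer $\operatorname{Stab}_{\G_g}(L_0)$, a finite-index subgroup of $\G_g$. Since $K_g$ is generated by BSCC maps, it therefore suffices to prove that every such map fixes $L_0$, for this gives $K_g\subseteq \operatorname{Stab}_{\G_g}(L_0)=\pi_1^\orb\big(\M_g^{1/r},(C,L_0)\big)$.

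The heart of the matter is the $\G_g$-action on the set of $r$-th roots, which is affine over $H_1(C;\Z/r)$ with linear part the symplectic action. A BSCC map $t_c$ lies in $T_g$, so its linear part is trivial and it acts on the torsor of roots purely by a translation, giving a homomorphism $v:T_g\to H_1(C;\Z/r)$, $t_c^\ast L_0 = L_0\otimes v(t_c)$. I would compute $v(t_c)$ from the topological model of an $r$-th root as an $r$-spin structure, i.e.\ a $\Z/r$-valued winding-number function $\phi$, for which the Picard--Lefschetz transformation rule expresses $t_c\cdot\phi-\phi$ as a fixed multiple of the Poincar\'e-dual functional $\langle-,[c]\rangle$. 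For a separating curve $[c]=0\in H_1(C;\Z)$, so this difference vanishes and $t_c$ fixes every root. The model case $r=2$ is exactly Johnson's formula for the action of a Dehn twist on the $\Z/2$-quadratic form of a theta characteristic, where the separating case is immediate because the functional $\langle-,[c]\rangle$ is identically zero; the general $r$ case is entirely analogous.

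The main obstacle in~(ii) is precisely establishing this transformation rule, and it cannot be bypassed by a representation-theoretic shortcut. The homomorphism $v$ factors through $H_1(T_g;\Z)$, and $v(K_g)$ is the image of the torsion subgroup of $H_1(T_g;\Z)$; while the free quotient $\Lambda^3_0 H\cong V(\lambda_3)$ admits no nonzero $\Sp_g$-equivariant map to the standard module $H_1(C;\Z/r)$, the Birman--Craggs--Johnson $2$-torsion could a priori map nontrivially into the $2$-torsion of $H_1(C;\Z/r)$ when $r$ is even, as it is here. Only the geometric computation rules this out. For~(i), the analogous obstacle is genuinely harder: the lifts of a separating curve to a Prym cover need not themselves be separating, so one must show that the product of their Dehn twists acts trivially on the chosen Prym homology, and this is the substance of Boggi's theorem that I would cite rather than reprove.
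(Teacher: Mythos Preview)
Your proposal reaches the right conclusions, but both parts take a different route from the paper.

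For (i), the paper does not simply cite Boggi; it gives a short self-contained argument. It observes that a Prym-level mapping class group is the kernel of a homomorphism $\G_{g,n}\to\Out G$ where $G$ is a finite quotient of $\pi_1(S',x)/W_{-3}$ (the two-step nilpotent quotient of the fundamental group of the punctured surface). Since $K_{g,n}$ acts trivially on $\pi_1(S',x)/W_{-3}$ --- this is essentially the definition of the Johnson kernel --- it lies in every such kernel. This bypasses entirely the issue you flag as the ``genuinely harder'' obstacle (lifts of a separating curve need not be separating), because one never has to analyze the lifted twists at all; the homology of the abelian cover is encoded in the nilpotent quotient downstairs. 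Your description via lifted Dehn twists is a correct account of Boggi's approach, but the paper's argument is both shorter and more conceptual.

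For (ii), the paper invokes a result of Sipe that the action of $T_g$ on $(2g-2)$nd roots of the canonical bundle factors through the Johnson homomorphism $T_g\to\Lambda^3 H_\Z/(\theta\wedge H_\Z)$; since $K_g$ is the kernel of this homomorphism, it acts trivially. Your direct computation with the $r$-spin winding-number model is a valid alternative and has the virtue of being self-contained, though your phrase ``a fixed multiple of $\langle-,[c]\rangle$'' is slightly loose (the multiple depends on $\phi(c)$); this does not matter since $[c]=0$ kills the term regardless. You are right that a purely representation-theoretic shortcut is blocked by the Birman--Craggs--Johnson $2$-torsion, but Sipe's theorem is exactly the integral refinement that disposes of this, so the paper's citation is not evading the difficulty you identify --- it is resolving it.
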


\begin{proof}[Sketch of Proof]
Let $(S;x_1,\dots,x_n)$ be an $n$-pointed reference surface of genus $g$, where
$S$ is compact. Set $S'= S-\{x_1,\dots,x_n\}$. The Prym-level mapping subgroups
of $\G_{g,n}$ are kernels of homomorphisms $\G_{g,n} \to \Out G$, where $G$ is a
finite quotient of $\pi_1(S',x)/W_{-3}$ and the weight filtration is pulled back
from that of the unipotent completion of $\pi_1(S',x)$. Since $K_{g,n}$ acts
trivially on $\pi_1(S',x)/W_{-3}$, Prym-level mapping class groups of type
$(g,n)$ contain $K_{g,n}$. See \cite[p.~179]{boggi-pikaart} for definitions.

As explained in \cite[\S 13]{hain:normal},\footnote{There is a typo in the first
display of Theorem~13.3: it should be
$$
0 \to 2 H_1(C,\Z/n\Z) \to {\mathcal K}_n^{(2)} \overset{\pi}{\to}
\Sp_g(\Z/n\Z) \to 1.
$$
\vspace{-3ex}
}
a result of Sipe \cite{sipe} implies that when $g\ge 3$ the action of the
Torelli group $T_g$ on the $(2g-2)$nd roots of the canonical bundle factors
through the Johnson homomorphism $T_g \to \Lambda^3 H_\Z/(\theta\wedge H_\Z)$.
This implies that $K_g$, and hence all $K_{g,n}$, act trivially on roots of the
canonical bundle when $g\ge 3$.
\end{proof}

\subsection{Kabanov purity}
\label{sec:kabanov}

Kabanov \cite{kabanov:purity} proved that the natural map from $H^2(\M_g,\V)$ to
the degree two intersection cohomology of the Satake compactification of $\M_g$
is an isomorphism when $g\ge 6$. When $3 \le g < 6$ he proved a more technical
statement. These statements, via the arguments in \cite[\S7]{hain:torelli},
imply the following result, which bounds the weights of the relations in
a presentation of $\Gr^W_\dot \u_{g,n}$.

\begin{theorem}
\label{thm:kabanov}
If $g\ge 6$, then $\Gr^W_m H^2(\u_{g,n}) = 0$ when $m\neq 2$. When $3 \le g <
6$, $\Gr^W_m H^2(\u_{g,n}) = 0$ when $m\notin \{2,3\}$. \qed
\end{theorem}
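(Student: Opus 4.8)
The plan is to read off the weights of $H^2(\u_{g,n})$ one $\Sp_g$-isotypical component at a time, to identify each component with a piece of the cohomology of a moduli space of curves with coefficients in a polarized variation of Hodge structure, and then to apply Kabanov's theorem to that cohomology. Throughout I take $R=\Sp_g$, which is connected, so $\pi_0(R)$ is trivial. Regarding each irreducible $V(\lambda)$ as a weight-$0$ Hodge structure, the weight filtration of $H^2(\u_{g,n})$ induces one on each multiplicity space $M_\lambda := [H^2(\u_{g,n})\otimes V(\lambda)]^{\Sp_g}$, and $\Gr^W_m H^2(\u_{g,n})=0$ if and only if $\Gr^W_m M_\lambda=0$ for every $\lambda\in\Rdual$. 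By Proposition~\ref{prop:hodge_mcg} the weight filtration of $\u_{g,n}$ is its lower central series, so $\u_{g,n}$ is generated in weight $-1$; hence $H_2(\u_{g,n})$ has weights $\le -2$ and dually $H^2(\u_{g,n})$ has weights $\ge 2$. Thus $\Gr^W_m M_\lambda=0$ for $m<2$ automatically, and the real content is the upper bound.

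Next I would realize $M_\lambda$ geometrically and produce a weight comparison. Treating the case $n=0$ first, the isomorphism $[H^2(\u_g)\otimes V_o]^{\Sp_g}\cong H^2(\g_g,V_o)$ together with the corollary of Section~\ref{sec:hodge} furnishes, for the polarized variation $\V_\lambda$ over $\M_g$ with fibre $V_o=V(\lambda)$, a morphism of mixed Hodge structures $H^2(\g_g,V_o)\to H^2(\M_g,\V_\lambda)$. This morphism is injective: the map (\ref{eqn:homom}) is injective in degree $2$, and the canonical map $H^2(\G_g,V_o)\to H^2(\M_g,\V_\lambda)$ is a rational isomorphism since $\M_g$ is an orbifold $K(\pi,1)$. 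Because $\V_\lambda$ is pure of some weight $w_\lambda$, its fibre $V_o$ is a Hodge structure of weight $w_\lambda$, so the comparison shifts weights by $w_\lambda$; concretely $\Gr^W_k H^2(\g_g,V_o)\cong \Gr^W_{k-w_\lambda}M_\lambda$.

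Now I would invoke Kabanov's theorem. By Deligne's theorem (\cite[Thm.~2.9]{zucker}) the group $H^2(\M_g,\V_\lambda)$ carries a mixed Hodge structure, and Kabanov's comparison \cite{kabanov:purity} identifies it, when $g\ge 6$, with the degree-two intersection cohomology of the Satake compactification of $\M_g$; the intersection cohomology of a projective variety is pure, here of weight $w_\lambda+2$. Since a morphism of mixed Hodge structures is strict for $W$, injectivity forces $\Gr^W_k H^2(\g_g,V_o)=0$ for $k\ne w_\lambda+2$; the shift $w_\lambda$ cancels, giving $\Gr^W_m M_\lambda=0$ for $m\ne 2$ and hence $\Gr^W_m H^2(\u_g)=0$ for $m\ne 2$. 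For $3\le g<6$ the same argument applies once Kabanov's weaker statement is shown to bound the weights of $H^2(\M_g,\V_\lambda)$ by $\{w_\lambda+2,\,w_\lambda+3\}$, which yields $m\in\{2,3\}$. To pass from $\u_g$ to $\u_{g,n}$, I would use the fibration $\M_{g,n}\to\M_g$: its relative completion expresses $\u_{g,n}$ as an extension of $\u_g$ by copies of the surface-group Lie algebra $\p$, and since $\Gr^W_\dot\p=\L(H)/(\sum_j[a_j,b_j])$ has $H^1(\p)$ pure of weight $1$ and $H^2(\p)$ pure of weight $2$, the marked-point contributions to $H^2(\u_{g,n})$ are already pure, so a spectral-sequence argument reduces the bound to the one just established for $\u_g$.

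The hard part will be the range $3\le g<6$, where Kabanov proves not purity but only a more technical comparison. The crux is to extract from it the precise statement that the only weight beyond $w_\lambda+2$ that can occur in $H^2(\M_g,\V_\lambda)$ is $w_\lambda+3$; this is where the detailed arguments of \cite[\S7]{hain:torelli} enter, and it is what forces the weaker conclusion $\{2,3\}$ rather than purity in low genus. A lesser point is to verify that the reduction from $\M_{g,n}$ to $\M_g$ introduces no new weights, which follows from the purity of surface-group cohomology noted above.
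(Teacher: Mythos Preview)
Your proposal is correct and follows essentially the same route the paper points to: the paper gives no proof beyond citing Kabanov's purity result and the arguments of \cite[\S7]{hain:torelli}, and your sketch fleshes out exactly that---injecting each $\Sp_g$-isotypical piece of $H^2(\u_g)$ into $H^2(\M_g,\V_\lambda)$ as a morphism of MHS, reading off weights from Kabanov's comparison with intersection cohomology (pure for $g\ge 6$, with the extra weight $3$ contribution for $3\le g<6$), and then reducing $n>0$ to $n=0$ via the fibration over $\M_g$ (just as the paper itself does in \S7.3.1). One small looseness: the kernel of $\u_{g,n}\to\u_g$ is not literally $\p^n$ but an iterated extension by (punctured) surface-group Lie algebras coming from the configuration-space fiber; this does not affect your weight argument, since their $H^1$ and $H^2$ are still pure of weights $1$ and $2$.
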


This implies that the minimal presentations of $\u_{g,n}$ has quadratic
relations when $g\ge 6$ and at worst cubic relations when $3\le g<6$.

\section{Presentations of $\u_g$ and $\t_g$ for $g\ge 4$}
\label{sec:quad_pres}

In this section we prove that $\u_{g,n}$, $\uhat_{g,n}$, $\t_{g,n}$ and
$\that_{g,n}$ are quadratically presented when $g\ge 4$ and $n\ge 0$. This was
proved in \cite{hain:torelli} when $g\ge 6$ using Kabanov's Purity Theorem. Here
we combine Sakasai's computations \cite{sakasai} with Kabanov purity to extend
this to the cases $g=4,5$. The same tools allow us to bound the relations in
genus $3$.

The starting point is to recall that $\Gr^W_m \t_g = \Gr^W_m \u_g$ except when
$m=-2$, where they differ by a copy of the trivial representation. To compute
$\Gr^W_{-3}\u_g$, we consider the homomorphism
\begin{equation}
\label{eqn:outer_action}
\Gr^W_\dot\u_g \to \OutDer\Gr^W_\dot\p.
\end{equation}
Johnson's fundamental work \cite{johnson:h1} implies that it is an isomorphism
in weight $-1$ when $g\ge 3$, in which case all groups are isomorphic to
$V(\lambda_3)$.\footnote{We could equally well have used the action
$\Gr^W_\dot\u_{g,1} \to \Der\Gr^W_\dot\p$. All results in the sequel have
equivalent analogues for it. This is because $\p$ has trivial center
\cite{asada-kaneko}, which implies that $\ker\{\u_{g,1}\to \u_g\}\cong
\ker\{\Der\p\to\OutDer\p\}$.}

\begin{proposition}[{\cite[Thm.~10.1]{hain:torelli}}]
\label{prop:wt_2}
The homomorphism (\ref{eqn:outer_action}) is an isomorphism in weight $-2$ for
all $g\ge 3$. In this case $\Gr^W_{-2}\u_g \cong V(2\lambda_2)$.
\end{proposition}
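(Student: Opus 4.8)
The plan is to exploit that both $\Gr^W_\dot\u_g$ and its image in $\OutDer\Gr^W_\dot\p$ are graded Lie algebras in the category of $\Sp_g$-modules that are generated in weight $-1$, where the homomorphism (\ref{eqn:outer_action}) is already the isomorphism $V(\lambda_3)\to V(\lambda_3)$ by Johnson. Since the weight filtration on $\u_g$ is its lower central series (Proposition~\ref{prop:hodge_mcg}), the bracket gives a surjection $\Lambda^2 V(\lambda_3)\twoheadrightarrow \Gr^W_{-2}\u_g$. Thus the weight $-2$ assertion reduces to identifying this quotient and comparing it with $\Gr^W_{-2}\OutDer\p\cong V(2\lambda_2)$ from Corollary~\ref{cor:outder}. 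The map in weight $-2$ then factors as $\Lambda^2 V(\lambda_3)\twoheadrightarrow \Gr^W_{-2}\u_g \to V(2\lambda_2)$, the composite being the bracket $\Lambda^2 V(\lambda_3)\to V(2\lambda_2)$ computed inside $\OutDer\Gr^W_\dot\p$.

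First I would establish surjectivity of the map in weight $-2$. Because $V(2\lambda_2)$ is an irreducible $\Sp_g$-module and the bracket is $\Sp_g$-equivariant, its image is either $0$ or all of $V(2\lambda_2)$, so it suffices to check that the bracket of two weight $-1$ outer derivations of $\Gr^W_\dot\p$ is nonzero in $\OutDer\Gr^W_\dot\p$. This is a direct computation in the explicit model $\Gr^W_\dot\p\cong\L(H)/(\sum_j[a_j,b_j])$: one writes down two of the weight $-1$ (Johnson) derivations, evaluates their commutator, and checks that the resulting weight $-2$ derivation is not inner. Hence $\Gr^W_{-2}\u_g\to V(2\lambda_2)$ is onto.

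The heart of the argument is the reverse inclusion, namely that $\Gr^W_{-2}\u_g$ carries no $\Sp_g$-summand beyond a single copy of $V(2\lambda_2)$; equivalently, that the kernel of $\Lambda^2 V(\lambda_3)\twoheadrightarrow\Gr^W_{-2}\u_g$ is the full $\Sp(H)$-complement of $V(2\lambda_2)$. Since $\u_g$ is generated in weight $-1$, these weight $-2$ relations are the image of $\psi$ in degree $2$, which is dual to the cup product $\Lambda^2 H^1(\u_g)\to \Gr^W_2 H^2(\u_g)$. Here I would invoke the Hodge-theoretic comparison of Section~\ref{sec:hodge} together with Kabanov purity (Theorem~\ref{thm:kabanov}), which forces $H^2(\u_g)$ to be pure of weight $2$ in the relevant degree and identifies $\Gr^W_2 H^2(\u_g)$ with the corresponding pure piece of the cohomology of the moduli space with coefficients in the standard local systems. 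Feeding in Johnson's computation of these cohomology groups, and decomposing $\Lambda^2 V(\lambda_3)$ via Kabanov stability, should then show that exactly the complement of the (stably unique) copy of $V(2\lambda_2)$ is killed, so that $\Gr^W_{-2}\u_g\cong V(2\lambda_2)$.

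Combining the two directions, the nonzero equivariant surjection $\Gr^W_{-2}\u_g\to V(2\lambda_2)$ between modules each isomorphic to $V(2\lambda_2)$ is an isomorphism by Schur's lemma, proving both assertions for all $g\ge 3$. I expect the main obstacle to be the relations computation of the third paragraph: pinning down $\Gr^W_2 H^2(\u_g)$ exactly and controlling the full multiplicity decomposition of $\Lambda^2 V(\lambda_3)$ down to $g=3$, so that genuinely nothing beyond $V(2\lambda_2)$ survives. This is where the purity input and the representation-stability bookkeeping carry the weight, and where the edge case $g=3$ may require checking the relevant decomposition directly rather than quoting it from the stable range.
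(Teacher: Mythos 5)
Your first half is sound: using $W_{-m}\u_g=L^m\u_g$ to get the surjection $\Lambda^2 V(\lambda_3)\twoheadrightarrow\Gr^W_{-2}\u_g$, and proving surjectivity onto $\Gr^W_{-2}\OutDer\p\cong V(2\lambda_2)$ by irreducibility plus a nonvanishing bracket computation, is exactly the ``lower bound'' half of the argument in \cite[\S10]{hain:torelli}, which is where this proposition is actually proved (the paper quotes it rather than reproving it). The gap is in your third paragraph, and it is structural rather than bookkeeping. The quadratic relations $R_2=\ker\big(\Lambda^2 V(\lambda_3)\to\Gr^W_{-2}\u_g\big)$ are dual to $\Gr^W_2 H^2(\u_g)$, so to show $R_2$ is the \emph{whole} complement of $V(2\lambda_2)$ you need a \emph{lower} bound on $\Gr^W_2H^2(\u_g)$, i.e.\ you must exhibit relations. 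The tools you invoke cannot do that. Kabanov purity (Theorem~\ref{thm:kabanov}) only constrains the weights in which $H^2(\u_{g,n})$ can live --- and for $3\le g<6$ it does not even give weight-$2$ purity --- it says nothing about the size of the weight-$2$ piece. Cohomology of moduli spaces pushes in the wrong direction as well: upper bounds on $H^2(\M_g,\V_\lambda)$ bound $H_2(\u_g)$, hence the relations, from \emph{above} (making $\Gr^W_{-2}\u_g$ larger, never smaller), while nonvanishing of $H^2(\M_g,\V_\lambda)$ does not produce classes in $H^2(\u_g)$ because the comparison map (\ref{eqn:homom}) is only injective, not surjective, in degree $2$. (Two smaller points: those $H^2$ computations are due to Hain and Kabanov, not Johnson, who computed $H_1(T_g)$; and Corollary~\ref{cor:cup_prod} is off-limits here, since it is itself deduced from the statement you are proving.)

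What actually kills the complement is geometric input absent from your proposal. The trivial summand $V(0)\subset\Lambda^2 V(\lambda_3)$ dies in $\u_g$ because of the non-trivial central extension $0\to\Q(1)\to\t_g\to\u_g\to 0$ (Theorem~\ref{thm:cent_ext}, Proposition~\ref{prop:hodge_mcg}(i)): applying $\Gr^W_{-2}$ gives the exact sequence $0\to\Q\to\Gr^W_{-2}\t_g\to\Gr^W_{-2}\u_g\to 0$, and since $\Gr^W_{-2}\t_g$ is a quotient of $\Lambda^2 V(\lambda_3)$ it contains at most one trivial summand, which is therefore exactly the central $\Q$; hence none survives in $\u_g$. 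For $g=3$ this already finishes the proof, since $\Lambda^2 V(\lambda_3)=V(0)+V(2\lambda_2)$ there --- so $g=3$ is the \emph{easy} case, contrary to your closing worry. For $g\ge4$ the remaining summands ($V(\lambda_2+\lambda_4)+V(\lambda_2)$, plus $V(\lambda_4)$ and $V(\lambda_6)$ as $g$ grows) are killed by exhibiting commuting elements of $T_g$: pairs of disjoint bounding pair maps commute, and the $\Sp(H)$-span of the corresponding decomposable vector of $\Lambda^2 V(\lambda_3)$ is precisely this complement. This is the content of \cite[\S10]{hain:torelli}, as the paper itself signals in the remark following Corollary~\ref{cor:cup_prod} and in the footnote in Section~\ref{sec:commuting} (which also explains why such maps fail to exist in genus $3$). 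Without the central extension and the commuting bounding pair maps, your argument cannot rule out that $\Gr^W_{-2}\u_g$ is strictly larger than $V(2\lambda_2)$.
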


In this section we will show that (\ref{eqn:outer_action}) is injective in
weight $-3$ for all $g\ge 4$. In Section~\ref{sec:pres_3} we will show that it
also injective in weight $-3$ when $g=3$, which will allow us to compute a
minimal set of cubic relations in genus $3$.

It is not known whether the outer action (\ref{eqn:outer_action}) is injective
in any genus. However, we believe (without much evidence) that it is ``stably
injective''.

\begin{question}
Is there a monotonic unbounded function $w: \N \to \N$ such that
$$
\Gr^W_{-m} \u_{g,1} \to \Gr^W_{-m} \p
$$
is injective when $m \le w(g)$?
\end{question}

This is a weaker version of Question 9.7 in \cite{hain-looijenga}.

\subsection{Quadratic relations}
\label{sec:quad_relns}

Let $\n$ be $\u_g$ or $\t_g$ when $g\ge 3$.
Since $H^1(\n)$ has weight $1$, $H^1(\n) = (\Gr^W_{-1}\n)^\ast$, the cup
product and the quadratic relations in $\Gr^W_\dot\n$ are related by the exact
sequence
$$
\xymatrix{
0 \ar[r] & \big(\Gr^W_{-2}\n\big)^\ast \ar[r]^\bracket &
\Lambda^2 H^1(\n) \ar[r]^(.45){\text{cup}} & \Gr^W_2H^2(\n) \ar[r] & 0,
}
$$
which is the weight $2$ summand of the Chevalley-Eilenberg cochains of $\n$.

\begin{proposition}[{\cite[Lem.~10.2]{hain:torelli}}]
As an $\Sp_g$-module, the second exterior power of $V:=V(\lambda_3)$ decomposes:
$$
\L_2(V) =
\Lambda^2 V =
\begin{cases}
V(0)+V(2\lambda_2) & g=3,\cr
V(0)+V(2\lambda_2) + V(\lambda_2+\lambda_4) + V(\lambda_2) & g=4,\cr
V(0)+V(2\lambda_2) + V(\lambda_2+\lambda_4) + V(\lambda_2) + V(\lambda_4)
& g=5,\cr
V(0)+V(2\lambda_2) + V(\lambda_2+\lambda_4) + V(\lambda_2) + V(\lambda_4)
+ V(\lambda_6) & g\ge 6.
\end{cases}
$$
\qed
\end{proposition}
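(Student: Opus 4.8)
The plan is to compute the decomposition once in the stable range and then descend to the three small genera. By Kabanov stability \cite{kabanov:stability}, the multiplicity of each irreducible $V(\mu)$ (indexed by partitions $\mu$) in $\Lambda^2 V(\lambda_3)$ is eventually independent of $g$, so it suffices to pin down the stable answer together with the finitely many cases $g=3,4,5$.

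For the stable computation I would first work over $\GL(H)$, where the relevant plethysm is $\Lambda^2(\Lambda^3 H) \cong S^{(2,2,1,1)}H \oplus S^{(1^6)}H$. One way to see this is to expand $e_2[e_3] = \tfrac12(e_3^2 - e_3[p_2])$ in power sums; the associated $S_6$-character $\psi$ has $\langle\psi,\psi\rangle = 2$ and $\langle\psi,\mathrm{sgn}\rangle = 1$, so $\Lambda^2(\Lambda^3 H)$ is the sum of $\Lambda^6 H = S^{(1^6)}H$ and one further Schur functor of $S_6$-dimension $9$; comparing dimensions over $\mathbb{C}^6$ (where $\dim\Lambda^2(\Lambda^3\mathbb{C}^6) = 190$) rules out $S^{(4,2)}$ and identifies it as $S^{(2,2,1,1)}$. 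Restricting each Schur functor to $\Sp(H)$ by Littlewood's rule (in the stable range one sums the Littlewood--Richardson coefficients $c^\lambda_{\mu\delta}$ over $\delta$ with even column lengths) yields $\Lambda^2(\Lambda^3 H)\big|_{\Sp}$. Finally, since $\Lambda^3 H = V(\lambda_3)\oplus V(\lambda_1)$, I would isolate $\Lambda^2 V(\lambda_3)$ by peeling off the two $\theta$-divisible summands $V(\lambda_3)\otimes V(\lambda_1) = V(\lambda_1+\lambda_3)+V(\lambda_4)+V(\lambda_2)$ and $\Lambda^2 V(\lambda_1) = \Lambda^2 H = V(\lambda_2)+V(0)$. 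The copies of $V(\lambda_1+\lambda_3)$ cancel, leaving the $g\ge 6$ list $V(0)+V(2\lambda_2)+V(\lambda_2+\lambda_4)+V(\lambda_2)+V(\lambda_4)+V(\lambda_6)$.

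To reach $g=3,4,5$ I would run the same computation in the universal (stable) symplectic character ring and specialize to $\Sp_{2g}$, applying the modification rules that rewrite a stable constituent with $\ell(\mu)>g$ as $0$ or $\pm V(\tilde\mu)$. The single-column constituents, which are the only ones that need unstable input, can be handled by hand: combining the symplectic duality $\Lambda^{g+k}H\cong\Lambda^{g-k}H$ with $\Lambda^m H = \bigoplus_{j\ge 0}V(\lambda_{m-2j})$ gives $S_{[1^{g+1}]}\big|_{\Sp_{2g}}=0$, $S_{[1^{g+2}]}\big|_{\Sp_{2g}}=-V(\lambda_g)$, and $S_{[1^{g+3}]}\big|_{\Sp_{2g}}=-V(\lambda_{g-1})$. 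Hence at $g=5$ the stable $V(\lambda_6)=S_{[1^6]}$ simply dies; at $g=4$ it specializes to $-V(\lambda_4)$, cancelling the stable $V(\lambda_4)$; and at $g=3$ we have $S_{[1^4]}\mapsto 0$ (killing $V(\lambda_4)$) and $S_{[1^6]}\mapsto -V(\lambda_2)$ (cancelling the stable $V(\lambda_2)$), while the one remaining constituent with too many rows, $V(\lambda_2+\lambda_4)$ (partition $(2,2,1,1)$, $\ell=4>3$), specializes to $0$. What survives is exactly $V(0)+V(2\lambda_2)$.

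The hard part is precisely this last, unstable step: the entire genus dependence of the statement is produced by modification-rule cancellations, and the delicate point is getting the signs and vanishings right --- in particular the simultaneous collapses at $g=3$. I would guard against error by cross-checking each genus against the Weyl dimension formula (for example $\dim_{\Sp_6}\Lambda^2 V(\lambda_3) = \binom{14}{2} = 91 = 1 + \dim V(2\lambda_2)$, with $\dim_{\Sp_6}V(2\lambda_2)=90$) and, where convenient, by confirming the four decompositions directly in \LiE{}.
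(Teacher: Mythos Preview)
Your argument is correct. The paper itself gives no proof of this proposition: it is stated with a citation to \cite[Lem.~10.2]{hain:torelli} and closed immediately with a \qed. In that reference (and in the analogous Lemma~\ref{lem:tensorprod} here) the method is a direct machine computation in \LiE, combined with Kabanov's stability theorem \cite{kabanov:stability} to transport the answer across genera.

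Your route is genuinely different and more conceptual. You compute the $\GL$-plethysm $\Lambda^2(\Lambda^3 H)\cong S^{(2,2,1,1)}H\oplus\Lambda^6 H$, restrict to $\Sp(H)$ via Littlewood's branching rule, strip off the $\theta$-divisible pieces to isolate $\Lambda^2 V(\lambda_3)$, and then handle $g=3,4,5$ by King's modification rules. This explains \emph{why} the constituents drop out one by one as $g$ decreases --- each lost summand is traced to a specific over-long partition and its signed cancellation --- whereas the paper's approach simply certifies the four decompositions by computer. The trade-off is that your unstable step requires care with the modification rules, which you sensibly hedge against with the Weyl-dimension check (e.g.\ $\binom{14}{2}=91=1+90$ at $g=3$) and an independent \LiE\ confirmation; at that point the two approaches converge.
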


Since the image of the cup product $\Lambda^2 H^1(\t_g)\to H^2(\t_g)$
is dual to the set of quadratic relations in $\Gr^W_\dot\t_g$, we have:

\begin{corollary}[Cf.\ {\cite[Thm.~10.1]{hain:torelli}}]
\label{cor:cup_prod}
The space of quadratic relations $R$ in a minimal presentation of $\Gr^W_\dot
\t_g$ are dual to (and hence isomorphic to) the image of the cup product:
$$
\im\{\Lambda^2 H^1(T_g,\Q) \to H^2(T_g,\Q)\}=
\begin{cases}
0 & g=3, \cr
V(\lambda_2+\lambda_4) + V(\lambda_2) & g=4, \cr
V(\lambda_2+\lambda_4) + V(\lambda_2) + V(\lambda_4) & g=5,\cr
V(\lambda_2+\lambda_4) + V(\lambda_2) + V(\lambda_4) + V(\lambda_6) & g\ge 6.
\end{cases}
$$
In particular, there are no quadratic relations in $\Gr^W_\dot \t_3$. The space
of quadratic relations in $\Gr^W_\dot \u_g$ is $R\oplus V(0)$.
\end{corollary}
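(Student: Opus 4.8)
The plan is to read both statements directly off the weight-$2$ exact sequence
$$
0 \to (\Gr^W_{-2}\n)^\ast \to \Lambda^2 H^1(\n) \to \Gr^W_2 H^2(\n) \to 0
$$
displayed just above the corollary, where $\n$ is $\t_g$ or $\u_g$ and the third map is the cup product. The only inputs needed are the decomposition of $\Lambda^2 V(\lambda_3)$ recalled in the preceding proposition, the weight $-2$ computation of Proposition~\ref{prop:wt_2}, and the basic structural facts of Proposition~\ref{prop:hodge_mcg}. By Proposition~\ref{prop:hodge_mcg} the MHS on $H^1(\n)$ is pure of weight $1$ and $H^1(\n) = (\Gr^W_{-1}\n)^\ast \cong V(\lambda_3)$, so $\Lambda^2 H^1(\n) \cong \Lambda^2 V(\lambda_3)$ as an $\Sp_g$-module.

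First I would justify the exactness on the left. Proposition~\ref{prop:hodge_mcg}(2) gives $H_1(\n) = \Gr^W_{-1}\n$, so $\Gr^W_\dot\n$ is generated in weight $-1$; since weight $-2$ can only arise as a bracket of weight $-1$ generators, the bracket $\Lambda^2 \Gr^W_{-1}\n \to \Gr^W_{-2}\n$ is surjective, which is exactly what makes the displayed map $(\Gr^W_{-2}\n)^\ast \to \Lambda^2 H^1(\n)$ injective. Next I would record the two weight $-2$ modules: Proposition~\ref{prop:wt_2} gives $\Gr^W_{-2}\u_g \cong V(2\lambda_2)$, while the nontrivial central extension (\ref{eqn:cent_extn}) adds one copy of $\Q(1)=V(0)$ in weight $-2$, so $\Gr^W_{-2}\t_g \cong V(2\lambda_2)\oplus V(0)$. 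As every irreducible $\Sp_g$-module is self-dual, the same modules describe $(\Gr^W_{-2}\n)^\ast$.

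The crucial observation is that the decomposition of $\Lambda^2 V(\lambda_3)$ is multiplicity-free in every genus. Because the left-hand map of the exact sequence is an injective morphism of $\Sp_g$-modules, Schur's lemma forces it to carry $(\Gr^W_{-2}\n)^\ast$ isomorphically onto the corresponding isotypic summands of $\Lambda^2 V(\lambda_3)$; hence $\Gr^W_2 H^2(\n) = \im(\mathrm{cup})$ is precisely the sum of the \emph{remaining} summands. For $\t_g$ one therefore deletes $V(2\lambda_2)+V(0)$ from the list and for $\u_g$ one deletes only $V(2\lambda_2)$. Reading this off genus by genus produces the stated table for $\t_g$ (in particular $0$ when $g=3$), and deleting only $V(2\lambda_2)$ in the $\u_g$ case leaves exactly $R\oplus V(0)$. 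Finally, the identification of $\im(\mathrm{cup})$ with the space of quadratic relations $R$ is the duality between cup product and quadratic relations recalled in the Chains subsection, which gives the ``dual to (and hence isomorphic to)'' clause.

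There is no serious obstacle: once the three inputs are in hand the argument is pure bookkeeping. The two points that genuinely require care are (i) verifying multiplicity-freeness of $\Lambda^2 V(\lambda_3)$, which is immediate from the preceding proposition and is what licenses the summand-by-summand cancellation, and (ii) keeping the $\t_g$ and $\u_g$ cases separate. The trivial summand $V(0)$ behaves differently in the two cases: for $\t_g$ it is a genuine weight $-2$ relation recording the central $\Q(1)$ as a bracket of weight $-1$ generators, so it is cancelled; for $\u_g$ it has no such source, survives into $\Gr^W_2 H^2(\u_g)$, and reappears as the extra quadratic relation in $R\oplus V(0)$.
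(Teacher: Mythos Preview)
Your proof is correct and follows essentially the same route as the paper: the paper states the corollary with only the one-line justification that the image of the cup product is dual to the quadratic relations, and implicitly expects the reader to subtract $(\Gr^W_{-2}\n)^\ast$ from the decomposition of $\Lambda^2 V(\lambda_3)$ using Proposition~\ref{prop:wt_2} and the central extension, exactly as you do. One small wording issue: in your final paragraph, calling the $V(0)$ in the $\t_g$ case a ``genuine weight $-2$ relation'' is potentially confusing, since it is precisely \emph{not} a defining relation there---it is a nontrivial bracket in $\Gr^W_{-2}\t_g$; your mathematics is right, but the phrasing momentarily inverts the terminology.
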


As explained in \cite[\S10]{hain:torelli}, a generic vector in the space of
quadratic relations is given by a pair of disjoint bounding pair maps.

\subsection{Sakasai's computation}

The following assertion is the dual of \cite[Prop.~6.2]{sakasai}.

\begin{proposition}[Sakasai]
\label{prop:sakasai}
The cokernel of the Jacobi identity map (\ref{eqn:jacobi})
$$
J : \Lambda^3 \Gr^W_{-1}\t_g \to \Gr^W_{-1}\t_g \otimes \Gr^W_{-2}\t_g
$$
has highest weight decomposition
$$
\coker J \cong
\begin{cases}
V(2\lambda_1 + \lambda_3) + V(\lambda_1+\lambda_2) + V(2\lambda_2+\lambda_3)
+ V(\lambda_3) & g=3,\cr
V(2\lambda_1 + \lambda_3) & g \ge 4.
\end{cases}
$$ \qed
\end{proposition}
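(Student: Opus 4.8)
The plan is to make the map $J$ completely explicit and then reduce the computation of its cokernel to representation theory. First I would record the two modules involved: Johnson's theorem gives $\Gr^W_{-1}\t_g \cong V(\lambda_3)$, and Proposition~\ref{prop:wt_2} together with the central extension of Proposition~\ref{prop:hodge_mcg}(i) gives $\Gr^W_{-2}\t_g \cong V(2\lambda_2) + V(0)$. By Corollary~\ref{cor:cup_prod} this weight $-2$ quotient is exactly $\Lambda^2 V(\lambda_3)/R$, where $R$ is the space of quadratic relations (so $R = 0$ precisely when $g = 3$), and the bracket $\Lambda^2\Gr^W_{-1}\t_g \to \Gr^W_{-2}\t_g$ is the canonical projection $\beta$. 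It follows that $J$ factors as $(\id\otimes\beta)\circ\tilde J$, where, writing $V := V(\lambda_3)$, the map $\tilde J : \Lambda^3 V \to V\otimes\Lambda^2 V$ is the Jacobi map of the \emph{free} Lie algebra $\L(V)$.

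The key reduction is then to invoke the standard presentation of the degree $3$ part of a free Lie algebra, the exact sequence
$$
0 \to \Lambda^3 V \xrightarrow{\ \tilde J\ } V\otimes\Lambda^2 V \xrightarrow{\ \mathrm{br}\ } \L_3(V)\to 0,
$$
in which $\mathrm{br}$ is the iterated bracket. Since $\id\otimes\beta$ is surjective with kernel $V\otimes R$ and $\im J = (\id\otimes\beta)(\im\tilde J)$, the preimage of $\im J$ is $\im\tilde J + V\otimes R$, which gives the natural isomorphism
$$
\coker J \cong (V\otimes\Lambda^2 V)/(\im\tilde J + V\otimes R) \cong \L_3(V)/[V,R],
$$
where $[V,R] = \mathrm{br}(V\otimes R)$ is the degree $3$ part of the ideal generated by $R$. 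When $g = 3$ we have $R = 0$, so $\coker J \cong \L_3(V(\lambda_3))$ outright; this is the Schur functor $\mathbb{S}_{(2,1)}(V(\lambda_3))$, whose $\Sp_3$-decomposition I would read off from $[\L_3(V)] = [V]\cdot[\Lambda^2 V] - [\Lambda^3 V]$ in the representation ring, using $\Lambda^2 V(\lambda_3) = V(0)+V(2\lambda_2)$ and the decompositions of $V(\lambda_3)\otimes V(2\lambda_2)$ and $\Lambda^3 V(\lambda_3)$ (a finite \LiE\ computation). A Weyl dimension count, $910 = 216 + 64 + 616 + 14$, matches the four claimed summands and serves as a check.

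For $g\ge 4$ the remaining task is to compute the quotient $\L_3(V)/[V,R]$ and to show that only $V(2\lambda_1+\lambda_3)$ survives. Here I would use Kabanov stability to carry out all plethysm and tensor-product decompositions in the stable range, decompose $\L_3(V(\lambda_3))$ and $V(\lambda_3)\otimes R$ in \LiE, and identify $[V,R]$ as the image of the equivariant bracket $V(\lambda_3)\otimes R \to \L_3(V(\lambda_3))$. The main obstacle is precisely this last identification: for each irreducible occurring with multiplicity $\ge 2$ in both source and target, character and dimension data cannot by themselves determine the rank of the bracket on the relevant multiplicity space, so one must evaluate it on explicit highest-weight vectors. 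This rank computation is the genuine content of the proposition, and is equivalent to Sakasai's explicit cocycle calculation \cite{sakasai}, of which the present statement is the dual. Carrying it out shows that bracketing with $R$ exhausts every summand of $\L_3(V(\lambda_3))$ except a single copy of $V(2\lambda_1+\lambda_3)$, which is stable in $g$, giving the $g\ge 4$ answer and completing the proof.
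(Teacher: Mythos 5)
Your reduction is correct, but it is worth saying plainly that the paper does not prove this proposition at all: it is stated as the dual of Sakasai's Prop.~6.2 and cited, full stop (hence the attribution ``[Sakasai]'' and the immediate \qed). So you have done strictly more than the paper. Your key step --- factoring $J = (\id\otimes\beta)\circ\tilde J$ through the free Jacobi map and using exactness of $0 \to \Lambda^3 V \to V\otimes\Lambda^2 V \to \L_3(V) \to 0$ (which holds because free Lie algebras have homology concentrated in degree $1$, exactly the weight-$3$ row of the paper's Chevalley--Eilenberg diagram) to get $\coker J \cong \L_3(V)/[V,R]$ --- is sound, and there is no circularity: Corollary~\ref{cor:cup_prod}, which gives $R$, rests on Proposition~\ref{prop:wt_2} and \cite[Lem.~10.2]{hain:torelli}, not on Proposition~\ref{prop:sakasai}. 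This buys a genuinely self-contained proof of the $g=3$ case, since $R=0$ forces $\coker J \cong \L_3(V(\lambda_3))$, whose decomposition (and your dimension check $910=216+64+616+14$) follows from Lemma~\ref{lem:tensorprod} plus one routine plethysm; it also isolates exactly where the content lies for $g\ge 4$, namely the rank of the bracket $V(\lambda_3)\otimes R \to \L_3(V(\lambda_3))$, which you correctly identify as equivalent to Sakasai's computation and then cite, just as the paper does. Two small corrections: the indeterminacy you describe is not confined to constituents of multiplicity $\ge 2$ --- even a multiplicity-one irreducible common to $V(\lambda_3)\otimes R$ and $\L_3(V(\lambda_3))$ can map by zero or isomorphically, so explicit highest-weight-vector evaluation is needed for every common constituent; and your assertion that carrying this out kills everything but $V(2\lambda_1+\lambda_3)$ is stated, not derived, so for $g\ge4$ your argument remains, like the paper's, a citation of Sakasai rather than a proof.
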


\begin{corollary}
\label{cor:sakasai}
For all $g\ge 3$, the image of $\Gr^W_{-3}\u_g \to \Gr^W_{-3}\OutDer \p$ is
isomorphic to $V(2\lambda_1 + \lambda_3)$. If $g=3$, then
\begin{enumerate}

\item one has the exact sequence
$$
0 \to \Gr^W_{-3} H_2(\t_3) \to
V(2\lambda_1 + \lambda_3) + V(\lambda_1+\lambda_2) + V(2\lambda_2+\lambda_3)
+ V(\lambda_3) \to \Gr^W_{-3}\t_3 \to 0
$$
of $\Sp(H)$-modules.

\item the sequence
$$
0 \to \Q(-1) \to H^2(\u_3) \to H^2(\t_3) \to V(-1) \to 0
$$
is exact. In particular, $\Gr^W_{-m} H^2(\t_3) = 0$ when $m\neq 3$.
Here $V=V(\lambda_3)$, which is regarded as having weight
$1$.

\end{enumerate}
If $g\ge 4$, $\Gr^W_m H^2(\t_g)$ vanishes when $m\neq 2$, $\Gr^W_{-3}\t_g =
V(2\lambda_1+\lambda_3)$, and the sequence
$$
0 \to \Q(-1) \to H^2(\u_g) \to H^2(\t_g) \to 0
$$
is exact. Consequently, $\Gr^W_m H^2(\u_g)$ vanishes when $m\neq 2$.
\end{corollary}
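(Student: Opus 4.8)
The plan is to run everything off the weight $3$ part of the Chevalley--Eilenberg chain complex. For a graded Lie algebra $\n$ that is generated in weight $-1$ with $H_1(\n)=\Gr^W_{-1}\n$, the weight $3$ row $\Lambda^3\Gr^W_{-1}\n\xrightarrow{J}\Gr^W_{-1}\n\otimes\Gr^W_{-2}\n\to\Gr^W_{-3}\n$ has no homology at its right-hand term (this is precisely the surjectivity $[\Gr^W_{-1}\n,\Gr^W_{-2}\n]=\Gr^W_{-3}\n$), and the Jacobi identity ensures that $J$ has image inside the kernel of the bracket, so the bracket descends to a surjection $\coker J\to\Gr^W_{-3}\n$ with kernel $\Gr^W_{-3}H_2(\n)$. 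This gives the short exact sequence $0\to\Gr^W_{-3}H_2(\n)\to\coker J\to\Gr^W_{-3}\n\to0$ for every $g$; specialising to $\n=\t_g$ and inserting Sakasai's Proposition~\ref{prop:sakasai} is exactly assertion (i) in genus $3$. By Proposition~\ref{prop:hodge_mcg} the weight filtrations are the lower central series and the kernel of $\t_g\to\u_g$ is the central $\Q(1)$ in weight $-2$, so $\Gr^W_{-3}\t_g=\Gr^W_{-3}\u_g=[\Gr^W_{-1}\u_g,\Gr^W_{-2}\u_g]$.

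Next I would identify the image in $\OutDer\Gr^W_\dot\p$. Set $D=\Gr^W_\dot\OutDer\p$. Johnson's theorem and Proposition~\ref{prop:wt_2} say the outer action is an isomorphism onto $D_{-1}=V(\lambda_3)$ and $D_{-2}=V(2\lambda_2)$, so by the previous paragraph the image of $\Gr^W_{-3}\u_g$ in $D_{-3}=V(2\lambda_1+\lambda_3)+V(3\lambda_1)$ (Corollary~\ref{cor:outder}) is exactly the bracket $[D_{-1},D_{-2}]$ computed inside $D$. The first assertion thus becomes the purely Lie-theoretic claim $[D_{-1},D_{-2}]=V(2\lambda_1+\lambda_3)$. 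Since the structure constants of $D$ stabilise and both $V(2\lambda_1+\lambda_3)$ and $V(3\lambda_1)$ remain irreducible and distinct for all $g\ge3$, it is enough to settle this in a single genus. For $g\ge4$ Sakasai gives $\coker J=V(2\lambda_1+\lambda_3)$, which bounds $[D_{-1},D_{-2}]$ above by a quotient of this irreducible; so the only real content is the nonvanishing $[D_{-1},D_{-2}]\neq0$.

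Granting that nonvanishing, the homological bookkeeping is formal. Dualising the short exact sequence via $H^2(\n)=H_2(\n)^\ast$ and feeding it into the Gysin sequence of Corollary~\ref{cor:gysin}, whose connecting maps are cup product with the Euler class $e\in\Gr^W_2H^2(\u_g)$, against Kabanov purity (Theorem~\ref{thm:kabanov}) shows $\Gr^W_mH^2(\t_g)=0$ for $m\ge4$ while $\Gr^W_2H^2(\t_g)$ is dual to the quadratic relations of Corollary~\ref{cor:cup_prod}; everything subtle lives in weight $3$. For $g\ge4$, nonvanishing makes $\Gr^W_{-3}\u_g=\Gr^W_{-3}\t_g$ a nonzero quotient of the irreducible $\coker J$, hence $V(2\lambda_1+\lambda_3)$; then $\Gr^W_{-3}H_2(\t_g)=0$, so $H^2(\t_g)$ is pure of weight $2$, the map $H^2(\t_g)\to H^1(\u_g)(-1)$ vanishes on weight grounds, and the Gysin sequence collapses to $0\to\Q(-1)\to H^2(\u_g)\to H^2(\t_g)\to0$ with $H^2(\u_g)$ pure of weight $2$. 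For $g=3$ the vanishing of the quadratic relations forces $\Gr^W_2H^2(\t_3)=0$, so the weight bounds leave $H^2(\t_3)$ pure of weight $3$; combined with $\Gr^W_2H^2(\u_3)=\Q(-1)$ and the vanishing of $\cup e\colon V(-1)\to H^3(\u_3)$, the low-weight Gysin sequence becomes the four-term exact sequence of (ii).

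The main obstacle is the weight $3$ input that the formal machinery cannot produce. Sakasai's cokernel bounds the image from above but says nothing about nonvanishing, so I expect the heart of the proof to be an explicit computation in $\Der\Gr^W_\dot\p$: exhibiting a highest weight vector of $V(2\lambda_1+\lambda_3)$ as a nonzero bracket of the degree $1$ and degree $2$ Johnson derivations, and checking at the same time that $V(3\lambda_1)$ is not hit. The very same calculation governs the genus $3$ connecting map $\cup e$, whose vanishing there --- in contrast to its injectivity for $g\ge4$ --- is the one place where genus $3$ behaves differently and must be verified by hand.
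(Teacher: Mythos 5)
Your formal skeleton is the same as the paper's --- the weight $-3$ row of the Chevalley--Eilenberg chains plus Sakasai's Proposition~\ref{prop:sakasai} gives the short exact sequence of (i), and the Gysin sequence of Corollary~\ref{cor:gysin} combined with Kabanov purity disposes of all weights other than $3$ --- but the proposal has two genuine gaps, both of which you flag and then defer to an explicit bracket computation in $\Der\Gr^W_\dot\p$ that is never carried out. The first is the nonvanishing of the image of $\Gr^W_{-3}\u_g$ in $\Gr^W_{-3}\OutDer\p$ (your $[D_{-1},D_{-2}]\neq 0$); the second is the surjectivity of $\psi\colon H^2(\t_3)\to V(-1)$ (equivalently, the vanishing of your genus $3$ connecting map). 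Without these, neither the first assertion, nor the $g\ge 4$ purity statements, nor (ii) is proved. Moreover, the reduction you propose for the first gap --- verify one genus and transfer by ``stability of structure constants'' of $\Gr^W_\dot\OutDer\p$ --- is itself unjustified: Kabanov's stability theorem is about decompositions of Schur functors and tensor products, and nothing cited gives you that the bracket $[D_{-1},D_{-2}]$ vanishes in one genus if and only if it vanishes in another.

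The missed idea is that neither input requires a computation; the paper settles both by soft arguments. For the nonvanishing: you already observe that the image must be $V(2\lambda_1+\lambda_3)$ or trivial, since $V(3\lambda_1)$ does not occur in $\L_3(V(\lambda_3))$. If it were trivial then, because $\Gr^W_\dot\t_g$ is generated in weight $-1$, the image of $\t_g$ in $\OutDer\p$ would be concentrated in weights $-1$ and $-2$, hence nilpotent --- which is not the case. This works uniformly for all $g\ge 3$, so no transfer between genera is needed. For (ii): the copy of the trivial representation in $\Gr^W_{-2}\t_3$ is central, so bracketing it against the generators $\Gr^W_{-1}\t_3$ produces a copy of $V=V(\lambda_3)$ among the cubic relations; since there are no quadratic relations (Corollary~\ref{cor:cup_prod}), this copy cannot be a consequence of relations in lower degree, and it is exactly what forces $\psi\colon H^2(\t_3)\to V(-1)$ to be surjective. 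Your proposed explicit computation of a highest weight vector might well succeed, but as written the proposal correctly assembles the formal machinery while leaving its two essential non-formal inputs unproved, and misjudges them as computational when the paper resolves both structurally.
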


Combined with Kabanov purity, this implies that $\t_3$ has only cubic relations
and that $\t_g$ and $\u_g$ are quadratically presented for all $g\ge 4$ as
$H^2(\t_g)$ and $H^2(\u_g)$ are pure of weight 2

\begin{proof}

Corollary~\ref{cor:outder} provides a lower bound for $\Gr^W_{-3}\t_g$. Since
$V(3\lambda_1)$ does not occur in $\L_3(V(\lambda_3))$, the image of
$\Gr^W_{-3}\t_g$ in $\Gr^W_{-3}\Out\Der\p$ must be $V(2\lambda_1+\lambda_3)$ or
trivial. If it were trivial, the image of $\t_g$ in $\OutDer\p$ would be
nilpotent, which is not the case. So the image of $\Gr^W_{-3}\t_g$ must be
$V(2\lambda_1+\lambda_3)$.

The homology of the sequence
$$
\xymatrix{
0 \ar[r] & \Lambda^3 \Gr^W_{-1}\t_g \ar[r]^(0.4)J &
\Gr^W_{-1}\t_g \otimes \Gr^W_{-2}\t_g
\ar[r]^(.55){\text{bracket}} & \Gr^W_{-3}\t_g \to 0
}
$$
is $\Gr^W_{-3}H_\dot(\t_g)$. Here the left-hand term is placed in degree 3 and
the right-hand term in degree 1. Since $\Gr^W_{-3}H_1(\t_g) = 0$, the right-hand
map is surjective. This implies that the sequence
$$
0 \to \Gr^W_{-3}H_2(\t_g) \to \coker J \to \Gr^W_{-3}\t_g \to 0
$$
is exact. When $g=3$, this implies assertion (i). When $g\ge 4$, Sakasai's
computation combined with the first assertion implies that $\Gr^W_{-3}H_2(\t_g)$
vanishes and that $\Gr^W_{-3}\t_g = V(2\lambda_1 + \lambda_3)$.

Since $H^1(\u_g) \to H^1(\t_g)$ is an isomorphism, the Gysin sequence
Corollary~\ref{cor:gysin} gives an exact sequence
$$
\xymatrix{
0 \ar[r] & \Q(-1) \ar[r] & H^2(\u_g) \ar[r]^\phi &
H^2(\t_g) \ar[r]^\psi & V(-1)
}
$$
of MHS in the category of $\Sp(H)$-modules. Kabanov purity implies $\Gr^W_m
H^2(\u_g)=0$ when $m>3$ and $g\ge 3$. Since $V(-1)$ has weight $3$, this implies
that $\Gr^W_m H^2(\t_g)$ also vanishes when $g\ge 3$ and $m>3$.

When $g\ge 4$, the vanishing of $\Gr^W_3 H^2(\t_g)$ implies the exactness of
$$
0 \to \Q(-1) \to H^2(\u_g) \to H^2(\t_g) \to 0.
$$
Combined with Kabanov purity, this implies that $\Gr^W_m H^2(\u)$ and $\Gr^W_m
H^2(\t_g)$ vanish for all $m\ge 3$.

To complete the proof, we prove assertion (ii). Since $\t_3$ has no quadratic
relations, and since the copy of the trivial representation in $\Gr^W_{-2}\t_3$
is central, there is a copy of $V$ in the cubic relations. This relation cannot
be a consequence of quadratic relations as there are none. This implies that the
homomorphism $\psi : H^2(\t_3) \to V(-1)$ is surjective. Plugging this into the
Gysin sequence above establishes (ii). 
\end{proof}

\subsection{Proof of Theorem~\ref{thm:torelli}}

By Proposition~\ref{prop:quad_equiv} and Lemma~\ref{lem:quad_pres} we only need
show that $H^2(\t_{g,n})$ and $H^2(\u_{g,n})$ are pure of weight 2. When $n=0$,
we know this from Corollary~\ref{cor:sakasai}. This yields a quadratic
presentation with the relations computed in Corollary~\ref{cor:cup_prod}:

\begin{theorem}
The Lie algebras $\u_g$ and $\t_g$ are quadratically presented for all $g\ge 4$.
The presentations are
$$
\t_g = \L(\Lambda^3_0 H)^\wedge/(R) \text{ and }
\u_g = \L(\Lambda^3_0 H)^\wedge/(R + V(0))
$$
where $R$ is the orthogonal complement of $V(2\lambda_2) + V(0)$ in $\Lambda^2_0
\Lambda^3_0 H = \L_2(\Lambda^3_0 H)$ described explicitly in
Corollary~\ref{cor:cup_prod}.
\end{theorem}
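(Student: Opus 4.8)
The plan is to reduce the statement (for $n=0$) to the two computations already assembled in Corollary~\ref{cor:sakasai} and Corollary~\ref{cor:cup_prod}, and then to run the formal presentation machinery recalled earlier. Throughout I work with $\u_g$ and $\t_g$.

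First I would record that each of $\u_g$ and $\t_g$ is isomorphic to the degree completion of the associated graded of its lower central series. This is immediate from Proposition~\ref{prop:hodge_mcg}(ii): the weight filtration agrees with the lower central series, so $\Gr^W_\dot\u_g=\Gr^\dot_\LCS\u_g$ and the exactness of $\Gr^W_\dot$ gives $\u_g\cong\prod_{m\ge1}\Gr^W_{-m}\u_g$, and likewise for $\t_g$. Moreover $\Gr^\dot_\LCS\u_g$ is generated in weight $1$, since the associated graded of any lower central series is generated by its degree-one part, and here that part is $H_1(\u_g)=\Gr^W_{-1}\u_g=V(\lambda_3)=\Lambda^3_0 H$ by Proposition~\ref{prop:hodge_mcg}(ii),(iii) with $n=0$; the same holds for $\t_g$.

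Next I would feed in purity. For $g\ge 4$, Corollary~\ref{cor:sakasai} asserts that $\Gr^W_m H^2(\u_g)=0$ and $\Gr^W_m H^2(\t_g)=0$ for all $m\neq 2$, so both $H^2(\u_g)$ and $H^2(\t_g)$ are pure of weight $2$. Since $\Gr^W_\dot$ is exact, $H^2(\Gr^\dot_\LCS\u_g)$ and $H^2(\Gr^\dot_\LCS\t_g)$ are concentrated in weight $2$, and Lemma~\ref{lem:quad_pres} then shows that the graded Lie algebras $\Gr^\dot_\LCS\u_g$ and $\Gr^\dot_\LCS\t_g$, each generated in weight $1$, are quadratically presented. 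Combining this with the degree-completion isomorphism of the previous step, Proposition~\ref{prop:quad_equiv} upgrades quadratic presentability from the associated graded to the pronilpotent Lie algebras $\u_g$ and $\t_g$ themselves.

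Finally I would identify the relation module. By the minimal presentation theory recalled in Section~\ref{sec:quad_relns}, the weight-$2$ part of a minimal set of relations of $\Gr^\dot_\LCS\n$ is dual to the image of the cup product $\Lambda^2 H^1(\n)\to\Gr^W_2 H^2(\n)$, and this image is precisely what Corollary~\ref{cor:cup_prod} records: as every irreducible $\Sp_g$-module is self-dual, the relation module for $\t_g$ is the orthogonal complement $R$ of $V(2\lambda_2)+V(0)$ in $\L_2(\Lambda^3_0 H)$, while for $\u_g$ it is $R+V(0)$. Because purity confines all minimal relations to weight $2$, these are the complete relation modules, yielding $\t_g=\L(\Lambda^3_0 H)^\wedge/(R)$ and $\u_g=\L(\Lambda^3_0 H)^\wedge/(R+V(0))$. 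The genuinely substantive input --- Sakasai's computation of $\coker J$ together with Kabanov purity --- is already contained in Corollary~\ref{cor:sakasai}, so no real obstacle remains beyond this assembly; the one point deserving care is verifying that $\Gr^\dot_\LCS\n$ is generated in weight $1$, needed to invoke Lemma~\ref{lem:quad_pres} and justified as in the first step.
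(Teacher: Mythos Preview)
Your proposal is correct and follows essentially the same approach as the paper: reduce via Proposition~\ref{prop:quad_equiv} and Lemma~\ref{lem:quad_pres} to showing that $H^2(\u_g)$ and $H^2(\t_g)$ are pure of weight $2$, invoke Corollary~\ref{cor:sakasai} for that purity, and read off the relation module from Corollary~\ref{cor:cup_prod}. You supply somewhat more detail than the paper (e.g., the explicit check that $\Gr^\dot_\LCS\n$ is generated in weight $1$ and the passage from $H^2(\n)$ to $H^2(\Gr^W_\dot\n)$ via exactness of $\Gr^W_\dot$), but the logical skeleton is identical.
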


\subsubsection{The case $n>0$}

The degree 2 cohomology of each of the Lie algebras $\u_{g,n}$, $\uhat_{g,n}$,
$\t_{g,n}$ and $\that_{g,n}$ is also pure of weight 2 when $n\ge 0$ and $g\ge
4$. This can be proved using the fact that, in each case,
\begin{enumerate}

\item the result is true for each series of Lie algebras when $n=0$,

\item $H^1$ of each of these Lie algebras is pure of weight $1$, and

\item  $H^2$ is pure of weight $2$ if and only it is true in the base case,
$n=0$.

\end{enumerate}
This can be proved by an easy spectral sequence applied, for example, to the
extension
$$
0 \to \p^n \to \uhat_{g,n} \to \u_g \to 0.
$$
Since $H^2(\u_g)$ is of weight 2 for all $g\ge 4$, this establishes the
assertion. Details are left to the reader.

As explained above, this result combined with Proposition~\ref{prop:quad_equiv}
and Lemma~\ref{lem:quad_pres} imply the following result, of which
Theorem~\ref{thm:torelli} is a special case.

\begin{theorem}
\label{thm:genus>3}
The Lie algebras $\u_{g,n}$, $\uhat_{g,n}$ and $\t_{g,n}$ and $\that_{g,n}$ are
quadratically presented for all $g\ge 4$ and all $n\ge 0$. \qed
\end{theorem}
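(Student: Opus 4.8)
The plan is to reduce the general $(g,n)$ statement to the already-established base case $n=0$ from Corollary~\ref{cor:sakasai}, using the three-point outline sketched in the excerpt together with Proposition~\ref{prop:quad_equiv} and Lemma~\ref{lem:quad_pres}. By those two results, it suffices to show that $H^2$ of each of the four Lie algebras $\u_{g,n},\uhat_{g,n},\t_{g,n},\that_{g,n}$ is pure of weight $2$ when $g\ge 4$ and $n\ge 0$; purity of $H^2$ is exactly the condition that makes the minimal presentation quadratic, since by Kabanov purity (Theorem~\ref{thm:kabanov}) the only other possible weight for $g\ge 4$ would be weight $3$, which we must rule out. I would treat all four families by the same spectral-sequence mechanism, taking the fibration-type extension
$$
0 \to \p^n \to \uhat_{g,n} \to \u_g \to 0
$$
(and its evident analogues for the Torelli series, and for the non-hatted versions obtained by deleting diagonals) as the central computational device.

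First I would record the base case: by Corollary~\ref{cor:sakasai}, $H^2(\u_g)$ and $H^2(\t_g)$ are pure of weight $2$ for all $g\ge 4$, which handles point (i) of the outline. Second, I would verify point (ii), that $H^1$ of each Lie algebra is pure of weight $1$; this follows from Proposition~\ref{prop:hodge_mcg}, which identifies $H_1(\u) = \Gr^W_{-1}\u$ and computes $H_1(\u_{g,n})\cong V(\lambda_3)+V(\lambda_1)^n$ as a pure weight-$1$ object, together with the fact that the surface-group contribution $H_1(\p)=H=V(\lambda_1)$ is pure of weight $1$. The third and decisive step is point (iii): running the Hochschild--Serre spectral sequence of the displayed extension in the category of MHS, the $E_2$ page has entries $H^p(\u_g)\otimes H^q(\p^n)$, and one uses that $H^\dot(\p)$ is built out of $H=V(\lambda_1)$ with the standard weights coming from $\Gr^W_\dot\p$. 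Because $\Gr^W_\dot$ is exact and the differentials are morphisms of MHS, the weights propagate so that any weight-$3$ class in $H^2(\uhat_{g,n})$ would force a weight-$3$ class already present in the $n=0$ data $H^2(\u_g)$ or in a low-degree piece of $H^\dot(\p)$; the base case and the explicit purity of $H^1$ and $H^2(\p)$ then exclude this.

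The main obstacle I anticipate is bookkeeping in that spectral sequence: one must check that no weight-$3$ contribution to $H^2$ can arise from the cross term $H^1(\u_g)\otimes H^1(\p^n)$ or from $H^0(\u_g)\otimes H^2(\p^n)$, and that the relevant differentials do not conspire to leave a surviving impure class. Since $H^1(\u_g)$ has weight $1$ and $H^1(\p)=H$ has weight $1$, their tensor product lands in weight $2$, which is consistent; the genuinely delicate check is $H^2(\p^n)$, where one needs $\Gr^W_{-m}H^2(\p)$ to contribute only in the expected weights so that tensoring against the weight-$0$ trivial part of $H^0(\u_g)$ cannot produce weight $3$ in total degree $2$. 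Once these weight-tracking verifications are in place—each a routine consequence of the exactness of $\Gr^W_\dot$ and the known purity inputs—conditions (i)--(iii) combine to give that $H^2$ of all four Lie algebras is pure of weight $2$, whence by Proposition~\ref{prop:quad_equiv} and Lemma~\ref{lem:quad_pres} each is quadratically presented, establishing Theorem~\ref{thm:genus>3}. I would then note that the four families are handled uniformly, so it is enough to carry out the argument for $\uhat_{g,n}$ and remark that the Torelli and non-hatted cases are identical after the corresponding extension is substituted.
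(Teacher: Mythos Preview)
Your proposal is correct and follows essentially the same approach as the paper: reduce to purity of $H^2$ via Proposition~\ref{prop:quad_equiv} and Lemma~\ref{lem:quad_pres}, establish the $n=0$ base case via Corollary~\ref{cor:sakasai}, and propagate to general $n$ by a Hochschild--Serre spectral sequence for the extension $0\to\p^n\to\uhat_{g,n}\to\u_g\to 0$ (and its analogues), tracking weights. The paper leaves the spectral-sequence weight bookkeeping to the reader; your elaboration of the cross terms $H^1(\u_g)\otimes H^1(\p^n)$ and $H^0(\u_g,H^2(\p^n))$ is exactly the detail one fills in, and the ``delicate'' check of $H^2(\p^n)$ is in fact immediate since $H^2(\p)\cong\Q(-1)$ is pure of weight $2$ and K\"unneth gives $H^2(\p^n)$ pure of weight $2$ as well.
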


\subsection{Proof of Theorem~\ref{thm:main_torelli}}

Recall from Section~\ref{sec:putman} that $K_{g,n}$ is the subgroup of $T_{g,n}$
generated by Dehn twists on bounding simple closed curves and that $\Khat_{g,n}$
is its image in $\That_{g,n}$. A group homomorphism $G \to H$ is {\em virtually
surjective} if its image has finite index in $H$.

\begin{theorem}
Suppose that $\G$ is a finitely generated group and that  $\G \to T_3$ is a
virtually surjective homomorphism. If $H_2(\G,\Q) \to H_2(T_3,\Q)$ is
surjective, then $\G$ is not a K\"ahler group. In particular, no finite index
subgroup of $T_{3,n}$ that contains $K_{3,n}$ and no finite index subgroup of
$\That_{g,n}$ that contains $\Khat_{3,n}$ is a K\"ahler group.
\end{theorem}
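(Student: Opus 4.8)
The plan is to reduce everything to the statement that a K\"ahler group has a quadratically presented Malcev Lie algebra and then to contradict this using the genuine cubic relation in $\t_3$ recorded in Theorem~\ref{thm:torelli3} and Corollary~\ref{cor:sakasai}. Suppose, for contradiction, that $\G = \pi_1(X)$ for a compact K\"ahler manifold $X$. By the unipotent case of Theorem~\ref{thm:u_quad} (the theorem of Deligne--Griffiths--Morgan--Sullivan), the Lie algebra $\u_\G$ of the unipotent completion of $\G$ is quadratically presented; equivalently, by Lemma~\ref{lem:quad_pres} and the weight-$2$ analysis of the Chevalley--Eilenberg complex, $H^2(\u_\G)$ is pure of weight $2$ and is spanned by cup products from $H^1(\u_\G)$. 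The homomorphism $f : \G \to T_3$ induces a homomorphism $\pi : \u_\G \to \t_3$ of Malcev Lie algebras, and the whole proof will be an analysis of $\pi$.

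First I would record the two consequences of the hypotheses. Virtual surjectivity of $f$, together with the transfer argument, makes $H_1(\G,\Q) \to H_1(T_3,\Q)$ surjective; since a pronilpotent Lie algebra is topologically generated by its $H_1$, the map $\pi$ is surjective. Next, dualizing the surjectivity of $H_2(\G,\Q) \to H_2(T_3,\Q)$ gives an injection $f^\ast : H^2(T_3,\Q) \hookrightarrow H^2(\G,\Q)$; combining this in the commuting square relating the comparison maps $H^2(\t_3)\to H^2(T_3)$ and $H^2(\u_\G)\to H^2(\G)$ (both injective by the cohomological results following (\ref{eqn:homom})), I conclude that $\pi^\ast : H^2(\t_3) \to H^2(\u_\G)$ is injective.

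The contradiction then comes from the cubic relation. By Corollary~\ref{cor:cup_prod} the cup product $\Lambda^2 H^1(T_3,\Q) \to H^2(T_3,\Q)$ vanishes, so the nonzero weight-$3$ class in $H^2(\t_3)$ dual to the cubic relation $V(2\lambda_1+\lambda_3)$ of Theorem~\ref{thm:torelli3} is \emph{indecomposable}: it is not a sum of cup products. Under the ring map $\pi^\ast$ it lands, injectively, in $H^2(\u_\G)$, which for a K\"ahler $\G$ is spanned by cup products. When $f$ induces an isomorphism on $H_1(-,\Q)$ this is already a contradiction: then $H^1(\u_\G) = \pi^\ast H^1(\t_3)$, so every cup product in $H^2(\u_\G)$ equals $\pi^\ast\alpha \cup \pi^\ast\beta = \pi^\ast(\alpha\cup\beta)$, whence the indecomposable class would be $\pi^\ast$ of a vanishing cup product, forcing it to be zero and contradicting injectivity of $\pi^\ast$.

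The special cases follow by exhibiting the required maps. A finite index subgroup $S \subseteq T_{3,n}$ containing $K_{3,n}$ maps to $T_3$ by forgetting the $n$ marked points; this is virtually surjective, and Corollary~\ref{cor:isom_comp}, via Putman's theorem, together with an easy spectral-sequence computation for the universal curve, yields the surjectivity of $H_2(S,\Q) \to H_2(T_3,\Q)$; the $\That_{3,n}$ cases are identical using $\Khat_{3,n}$. The hard part is the general case, where $f$ is only surjective on $H_1$: then $H^1(\G)$ may be strictly larger than $f^\ast H^1(T_3)$, and the extra classes produce cup products and triple Massey-product indeterminacy in $\G$ that could a priori account for the image of the indecomposable class, so the elementary argument above breaks down. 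Controlling this is the heart of the matter, and I would handle it Hodge-theoretically: since $H^2(\G)$ is pure of weight $2$ for a compact K\"ahler $\G$ while the offending class originates in weight $3$, the comparison must be shown to be strict on the relevant graded piece, so that the weight-$3$ contribution is forced to vanish. Equivalently, the triple Massey product transported from $T_3$ (whose indeterminacy in $T_3$ is zero, as its cup products vanish) cannot be absorbed into the weight-$2$ cup-product indeterminacy of $\G$. Establishing exactly this compatibility of $\pi$ (or $\pi^\ast$) with the weight filtration is the step I expect to be the main obstacle.
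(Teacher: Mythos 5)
Your overall strategy is the paper's: assume $\G$ is K\"ahler, invoke Theorem~\ref{thm:u_quad} to get a quadratic presentation of the Malcev Lie algebra of $\G$, and contradict this using the fact that $\t_3$ has cubic but no quadratic relations. Your argument is sound in the special case where $H_1(\G;\Q)\to H_1(T_3;\Q)$ is an \emph{isomorphism}, but the theorem only assumes virtual surjectivity, and you say yourself that the general case is an obstacle you do not overcome. That is a genuine gap, not a loose end, and it is fatal even for the stated applications: for every $n\ge 1$, a finite index subgroup $S$ of $T_{3,n}$ containing $K_{3,n}$ has $H_1(S;\Q)\cong H_1(T_{3,n};\Q)$ by Putman's theorem, and by Proposition~\ref{prop:hodge_mcg}(iii) the forgetful map $H_1(T_{3,n};\Q)\to H_1(T_3;\Q)$ has kernel containing $V(\lambda_1)^n\neq 0$. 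So your argument proves none of the ``in particular'' cases with marked points. Moreover, your proposed repair---that $\pi^\ast$ be strict for the weight filtrations---is not available for free: the MHS on the Malcev Lie algebra of $\G$ comes from the compact K\"ahler manifold $X$, the MHS on $\t_3$ comes from the geometry of moduli of curves, and $\pi$ is induced by an abstract group homomorphism with no geometric realization, so there is no a priori reason it is a morphism of MHS.

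The idea you are missing is Corollary~\ref{cor:quad_pres}, which is the homological dual of your cup-product argument and is completely insensitive to the kernel of $H_1(\G;\Q)\to H_1(T_3;\Q)$. The hypotheses of the theorem give two facts: the map $\g\to\t_3$ of Malcev Lie algebras is surjective (your transfer argument), and $H_2(\g)\to H_2(\t_3)$ is surjective, obtained by composing the hypothesis $H_2(\G;\Q)\to H_2(T_3;\Q)$ with the natural surjections from group homology to Lie algebra homology (the duals of the degree-two injections you already invoked). By Hopf's formula (\ref{eqn:hopf}), the second fact says that the relation space of $\t_3$ is generated by images of relations of $\g$. The paper then applies Corollary~\ref{cor:quad_pres}: for surjections of Lie algebras generated in weight one, with induced maps taken on the weight-graded level (where the weight filtrations are the lower central series on both sides, by Theorem~\ref{thm:u_quad}(ii) and Proposition~\ref{prop:hodge_mcg}(ii), and where morphisms automatically induce \emph{graded} maps on homology), a quadratically presented source with $H_2$ surjective forces the target to be quadratically presented, since an $H_2$ concentrated in weight $2$ cannot surject onto anything with a nonzero weight $3$ piece. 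As $\Gr^W_3 H^2(\t_3)\neq 0$ (Corollary~\ref{cor:sakasai}, Proposition~\ref{prop:h2}), this is the desired contradiction. The point is that in this dual, presentation-theoretic formulation the ``extra'' classes in $H^1(\G)$, their cup products, and the Massey indeterminacy they create never enter the argument at all; that is precisely where your cohomology-ring approach stalls.
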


\begin{proof}
From \cite[Rem.~10.4]{hain:torelli} (or Cor.~\ref{cor:sakasai}) we know that
$\Gr^W_3 H^2(\t_3)\supseteq V(\lambda_3)$ and is therefore non-zero.
Proposition~\ref{prop:quad_equiv} and Lemma~\ref{lem:quad_pres} imply that
$\t_3$ is not quadratically presented. The remaining cases follow by applying
Corollary~\ref{cor:quad_pres} to the homomorphism $\g \to \t_3$ from the Lie
algebra of the unipotent completion of $\G$ to $\t_3$.
\end{proof}

This completes the proof of Theorem~\ref{thm:main_torelli}. In the proof of
Theorem~\ref{thm:main_mcg}, which is given in Section~\ref{sec:kahler}, we will
need to know that $\u_3$ is not quadratically presented. This does not follow
from the fact that $\t_3$ is not quadratically presented. It is conceivable that
$\u_3$ is quadratically presented even though $\t_3$ is not. This would happen
if $\Gr^W_3 H^2(\t_3)$ were  equal to $V(\lambda_3)$. In that case $\u_3$ would
be isomorphic to $\p_7$, the Lie algebra associated to the unipotent fundamental
group of a genus 7 surface $S$, and $\t_3$ would be isomorphic to the Lie
algebra $\p_{7,\vec{1}}$ of the unipotent fundamental group of the unit tangent
bundle of $S$. These are related by a central extension
$$
0 \to \Q(1) \to \p_{7,\vec{1}} \to \p_7 \to 0.
$$
To prove that $\u_3$ is not quadratically presented we will show that $\Gr^W_3
H^2(\t_3)$ is strictly larger than $V(\lambda_3)$. To achieve this we compute
explicit presentations of $\t_3$ and $\u_3$ in Section~\ref{sec:pres_3}.

\section{Commuting Elements}
\label{sec:commuting}

In this section, we assume the reader is familiar with the Johnson homomorphism.
Surveys of its construction can be found in \cite{johnson:survey} and
\cite{hain:normal}. To generate cubic relations in $\t_3$, we determine the
$\Sp(H)$-module of cubic relations determined by a certain pair of commuting
elements of $T_3$. Bounding pair maps give non-trivial elements of
$\Gr^W_{-1}\t_g$ via the Johnson homomorphism. In genus 3, one cannot find
distinct commuting bounding pair maps.\footnote{This provides a heuristic
explanation of why there are no quadratic relations in $\t_3$. In genus 4 and
beyond, commuting BP maps generate all relations in $\Gr^W_\dot\t_g$. Cf.\
\cite[\S10]{hain:torelli}.} Because of this, we are forced to consider a
commuting pair of elements of $T_3$ consisting of a bounding pair map and a Dehn
twist on a bounding simple closed curve (a BSCC map). For possible future
applications, we give a lower bound on the cubic relations generated by such
commuting pairs for all $g\ge 3$ as the computations are no more difficult than
they are in the genus 3 case. In the next section we will show that these are
all the cubic relations, which will allow us to give a presentation of $\t_3$.

Let $S$ be a reference surface of genus $g$. Throughout this section, $t_A$ will
be the BSCC map and $t_B:= t_{B_+}t_{B_-}^{-1}$ will be the BP map depicted in
Figure~\ref{fig:commuting_elets}.
\begin{figure}[ht!]
\epsfig{file=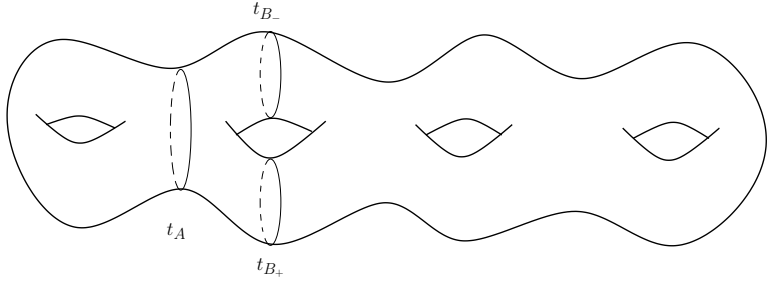, width=4in}
\caption{The BSCC map $t_A$ and the BP map $t_B:= t_{B_+}t_{B_-}^{-1}$}
\label{fig:commuting_elets}
\end{figure}
They commute as they have disjoint supports.

If we regard $t_A$ and $t_B$ as
elements of the unipotent completion $\T_g$ of $T_g$, we can take their
logarithms $\log t_A$ and $\log t_B$. These are commuting elements of $\t_g$.
Since $t_A$ lies in the kernel of the Johnson homomorphism and since
$\t_g=W_{-1}\t_g$,
$$
\log t_A \in W_{-2}\t_3 \text{ and } \log t_B \in W_{-1}\t_3.
$$
The image of their tensor product in $\Gr^W_{-2}\t_3 \otimes \Gr^W_{-1}\t_3$
thus lies in the kernel of the bracket
\begin{equation}
\label{eqn:bracket}
\Gr^W_{-2}\t_3 \otimes \Gr^W_{-1}\t_3 \to \Gr^W_{-3} \t_3.
\end{equation}
The ultimate goal of this section is to prove the following result.

\begin{proposition}
\label{prop:sub-mod}
If $g\ge 3$, then the $\Sp(H)$-submodule of the kernel of (\ref{eqn:bracket})
generated by the image of $\log t_A \otimes \log t_B$ contains the unique
submodule of $\Gr^W_{-2}\t_3 \otimes \Gr^W_{-1}\t_3$ with highest weights
$2\lambda_2+\lambda_3$, $\lambda_1+\lambda_2$, and $\lambda_3$.
\end{proposition}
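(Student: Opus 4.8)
The plan is to make the element $v := \log t_A \otimes \log t_B$ completely explicit, to note that it lies in the kernel of (\ref{eqn:bracket}) because $t_A$ and $t_B$ commute, and then to detect its components in the three target irreducibles by equivariant projections. First I would record the two factors. By Johnson's theorem $\Gr^W_{-1}\t_g \cong V(\lambda_3) = \Lambda^3_0 H$, and the Johnson homomorphism carries the bounding pair map $t_B$ of Figure~\ref{fig:commuting_elets} to an explicit decomposable vector $c\wedge\sigma \in \Lambda^3_0 H$, where $c$ is the homology class of $B_\pm$ and $\sigma$ is read off from the subsurface that the bounding pair cobounds. By Proposition~\ref{prop:wt_2}, $\Gr^W_{-2}\t_g = V(0)\oplus V(2\lambda_2)$, and the class of the BSCC map $t_A$ is computed from the second-order (quadratic) refinement of the Johnson homomorphism: it is the weight-zero vector obtained from the traceless part of the symplectic form of the subsurface bounded by $A$, squared inside $\mathrm{Sym}^2\Lambda^2_0 H$ and projected to $V(2\lambda_2)$, together with a possible component in the central copy $V(0)$. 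Assembling these presents $v$ as a concrete weight vector in $(V(0)\oplus V(2\lambda_2))\otimes V(\lambda_3)$.

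Next I would reduce the assertion to three nonvanishing statements. Using Kabanov stability I would compute the decomposition of $V(2\lambda_2)\otimes V(\lambda_3)$ in a stable range and transport it to $g=3$; together with Sakasai's computation (Proposition~\ref{prop:sakasai}, Corollary~\ref{cor:sakasai}) this identifies which copies of $V(2\lambda_2+\lambda_3)$, $V(\lambda_1+\lambda_2)$ and $V(\lambda_3)$ occur. Each of these lies in the kernel of (\ref{eqn:bracket}): the copy inside $V(0)\otimes V(\lambda_3)$ because $V(0)$ is central in $\t_g$, and the copies inside $V(2\lambda_2)\otimes V(\lambda_3)$ because $\Gr^W_{-3}\t_g = V(2\lambda_1+\lambda_3)$ contains none of the three, so equivariance of the bracket forces them into the kernel. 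It therefore suffices to prove that $v$ has nonzero image under the $\Sp(H)$-equivariant projection onto the $V(\mu)$-isotypic part of the kernel for each $\mu \in \{2\lambda_2+\lambda_3,\ \lambda_1+\lambda_2,\ \lambda_3\}$; any such nonvanishing forces the cyclic module generated by $v$ to contain a copy of $V(\mu)$.

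For the Cartan component $V(2\lambda_2+\lambda_3)$ the argument is formal. The Cartan multiplication $V(2\lambda_2)\otimes V(\lambda_3) \to V(2\lambda_2+\lambda_3)$ is a nonzero $\Sp(H)$-map, and the Cartan multiplications assemble $\bigoplus_\lambda V(\lambda)$ into the multi-homogeneous coordinate ring of the flag variety of $\Sp(H)$, which is an integral domain; hence this map does not vanish on any product of two nonzero vectors. Since the $V(2\lambda_2)$-component of $\log t_A$ and the vector $\log t_B$ are both nonzero, the $V(2\lambda_2+\lambda_3)$-component of $v$ is nonzero. (Equivalently, one may raise $v$ to the extreme weight $2\lambda_2+\lambda_3$ and check that the outcome is a nonzero multiple of the highest weight vector.) For the two remaining pieces I would write down explicit $\Sp(H)$-equivariant maps $V(2\lambda_2)\otimes\Lambda^3_0 H \to V(\lambda_1+\lambda_2)$ and $\to V(\lambda_3)$ obtained by a suitable sequence of symplectic contractions of factors of $H$ against the three slots of $\Lambda^3_0 H$ using the pairing $\bil$, and evaluate them on the explicit vector $\log t_A\otimes\log t_B$; each value is a finite expression in the chosen symplectic basis whose nonvanishing is a direct check.

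The main obstacle is the explicit weight $-2$ computation for the BSCC map $t_A$. The weight $-1$ datum is the classical Johnson homomorphism, but the class of $t_A$ in $\Gr^W_{-2}\t_g$ requires the second-order structure, and in particular one must verify that its $V(2\lambda_2)$-component is nonzero (this is what feeds the Cartan argument) while keeping track of its central $V(0)$-component. Once $v$ is in hand, the remaining work --- the stable tensor-product decomposition, locating the three irreducibles by means of Sakasai's computation, and constructing and evaluating the contraction operators --- is routine, if somewhat lengthy. A useful consistency check on the explicit formula for $v$ is that it must lie in the kernel of (\ref{eqn:bracket}), which is guaranteed a priori by the relation $[\log t_A, \log t_B] = 0$.
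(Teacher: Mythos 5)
Your plan follows the paper's proof quite closely. The paper also proceeds by (a) computing $\overline{\log t_B}=\overline{a_2\wedge b_2\wedge a_1}$ via the Johnson homomorphism and $\overline{\log t_A}=\overline{(a_2\wedge b_2-\theta/g)^2}$ via Morita's formula for the second Johnson homomorphism (this occupies most of Section~\ref{sec:commuting}); (b) decomposing $V(2\lambda_2)\otimes V(\lambda_3)$ by Kabanov stability (Lemma~\ref{lem:tensorprod}), so that each of the three target irreducibles occurs with multiplicity one; and (c) detecting the components of $v$ by applying raising operators and explicit $\Sp(H)$-equivariant contractions (Lemma~\ref{lem:span}). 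Your Cartan-product argument for the $V(2\lambda_2+\lambda_3)$ component is a genuine and slicker alternative to the paper's explicit computation $F_{2,3}\circ F_{1,2}^2(v)=2(a_1\wedge a_2)^2\otimes(a_1\wedge a_2\wedge a_3)$: it needs only the nonvanishing of the $V(2\lambda_2)$-component of $\log t_A$, not its exact value.

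Two caveats. First, your argument that the copies of the three irreducibles inside $V(2\lambda_2)\otimes V(\lambda_3)$ lie in the kernel of (\ref{eqn:bracket}) invokes $\Gr^W_{-3}\t_g\cong V(2\lambda_1+\lambda_3)$; for $g\ge 4$ this is Corollary~\ref{cor:sakasai}, but in the crucial case $g=3$ it is established only in Proposition~\ref{prop:h2}, \emph{as a consequence of} Proposition~\ref{prop:sub-mod}, so as written that step is circular. Fortunately it is also unnecessary: since $[\log t_A,\log t_B]=0$, the whole cyclic module generated by the image of $v$ already lies in the kernel, and once it is shown to contain a copy of each $V(\mu)$, multiplicity one in $V(2\lambda_2)\otimes V(\lambda_3)$ identifies that copy with the one named in the statement; kernel membership of those copies is then a conclusion, not a needed hypothesis. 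Second, the two items you defer --- Morita's weight $-2$ formula for a BSCC twist and the evaluation of the contraction operators on $v$ --- are not side checks but are the actual substance of the paper's proof: the projections produce scalars such as $-(g-2)(2g+1)/(g^2-1)$ for $\lambda_1+\lambda_2$ and $(g-1)(4g^2+12g+3)/(g(g+1)(2g+1))$ for $\lambda_3$, and their nonvanishing for $g\ge 3$ is exactly what the proposition rests on (note the first scalar vanishes at $g=2$, so these nonvanishing statements are not forced by equivariance alone and must be computed). With the circular step deleted and those computations supplied, your argument is the paper's.
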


Let $S'$ be the genus 1 subsurface of $S$ that lies to the left of the
separating SCC $A$. Choose a symplectic basis $a_1,\dots,a_g,b_1,\dots,b_g$ of
$H_1(S)$, where $a_2,b_2$ is a basis of $H_1(S')$ and where the homology classes
of $B_\pm$ are $\pm a_1$. The Johnson homomorphism induces an isomorphism
$$
\Gr^W_{-1}\t_g \cong \Lambda^3_0 H.
$$
The image of $\log t_B$ in $\Lambda^3_0 H$ under this isomorphism is the image
\begin{equation}
\label{eqn:johnson}
\overline{a_2\wedge b_2\wedge a_1} \in \Lambda^3_0 H \cong \Gr^W_{-1}\OutDer\p
\end{equation}
of $a_2\wedge b_2\wedge a_1 \in \Lambda^3 H$ in $\Lambda^3_0 H$. (Cf.\
\cite[Prop.~10.3]{hain:torelli}.)

Recall from Corollary~\ref{cor:outder} that $\Gr^W_{-2}\OutDer\p$ is isomorphic
to $V(2\lambda_2)$. Our first task is to compute the element of $V(2\lambda_2)$
that corresponds to $\log t_A$. The first step is to compute the image of a BSCC
map under the second Johnson homomorphism that takes an element of the kernel of
the first Johnson homomorphism to $\Gr^W_{-2}\OutDer \p$.

\subsection{The second Johnson homomorphism}

In this section we give an exposition of Morita's computation
\cite[p.~308]{morita:casson} of the second Johnson homomorphism. Suppose that
$g\ge 2$, that $x$ is a point in the reference surface $S$ and that $A$ is a
bounding simple closed curve in $S-\{x\}$. Denote the Lie algebra of the
unipotent completion of $\pi_1(S,x)$ by $\p$. Denote the Dehn twist about $A$ by
$t_A$. It is an element of $T_{g,1}$ and has logarithm $\log t_A$ in
$W_{-2}\t_{g,1}$. In this section we compute the image of $\log t_A$ in
$\Gr^W_{-2}\Der \p$.

\begin{figure}[ht!]
\epsfig{file=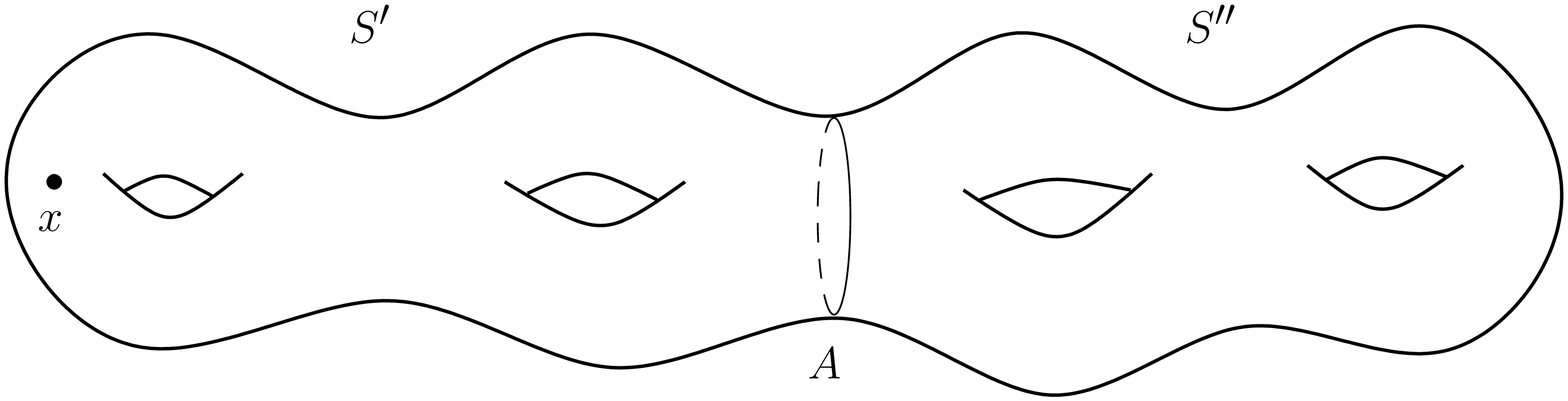, width=4in}
\caption{The BSCC $A$ and the decomposition $S=S'\cup S''$}
\end{figure}

The curve $A$ divides $S$ into two sub-surfaces. Denote the one containing $x$
by $S'$ and the other by $S''$. Let $g'$ and $g''$ be their respective genera.
Note that $g'+g'' = g$. Choose a symplectic basis
$a_1,\dots,a_{g'},b_1,\dots,b_{g'}$ of $H':= H_1(S')$ and
$a_{g'+1},\dots,a_{g},b_{g'+1},\dots,b_{g}$ of $H'':= H_1(S'')$. Set
$$
\theta' = \sum_{j=1}^{g'} a_j \wedge b_j,\
\theta'' = \sum_{j=g'+1}^{g} a_j \wedge b_j\text{ and }\theta =\theta'+\theta''.
$$
Then $H := H_1(S) = H'\oplus H''$ and $\theta$ corresponds to the cup product
$\Lambda^2 H^1(S) \to \Z$.

Define
$$
\phitilde : S^2 \Lambda^2 H \to \Hom(H,\L_3(H)) = \Gr^W_{-2}\Der\L(H)
$$
by
\begin{multline*}
\phitilde\big((u_1\wedge v_1)(u_2\wedge v_2)\big) :x \mapsto
\langle u_1,x \rangle[v_1,[u_2,v_2]] - \langle v_1,x \rangle[u_1,[u_2,v_2]]
\cr
+
\langle u_2,x \rangle[v_2,[u_1,v_1]] - \langle v_2,x \rangle[u_2,[u_1,v_1]].
\end{multline*}
This homomorphism is easily seen to be $\Sp(H)$-equivariant.

\begin{lemma}
For a subset $I$ of $\{1,\dots,g\}$, set $\theta_I = \sum_{i\in I}a_i\wedge
b_i$, $H_I = \Span\{a_i,b_i: i \in I\}$, and $H_I^c = \Span\{a_i,b_i: i \notin
I\}$. Then
$$
\phitilde(\theta_I^2)(x) =
\begin{cases}
0 & x\in H_I^c,\cr
2[x,\theta_I] & x \in H_I.
\end{cases}
$$
\end{lemma}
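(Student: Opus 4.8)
The plan is to recognize that $\phitilde$ is the symmetrization of a simpler bilinear expression built from two elementary operations, and then to reduce the whole computation to understanding how those operations act on $\theta_I$. First I would introduce, for a decomposable bivector $u\wedge v\in\Lambda^2 H$, the symplectic contraction $c_x(u\wedge v) := \langle u,x\rangle v - \langle v,x\rangle u \in H$ and the bracket $b(u\wedge v) := [u,v]\in\L_2(H)$; both are linear in $u\wedge v$. With this notation the defining formula for $\phitilde$ reads
$$
\phitilde(\alpha\beta)(x) = [c_x(\alpha),\, b(\beta)] + [c_x(\beta),\, b(\alpha)],
$$
since the first two displayed terms are exactly $[c_x(\alpha),b(\beta)]$ and the last two are $[c_x(\beta),b(\alpha)]$. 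As both $c_x$ and $b$ are linear, this identity extends from decomposables to the whole symmetric square $S^2\Lambda^2 H$, and it is manifestly symmetric in $\alpha$ and $\beta$, consistent with $\phitilde$ being defined on $S^2\Lambda^2 H$.

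Next I would specialize to $\alpha=\beta=\theta_I$. Evaluating the (symmetric) expression above on the square $\theta_I^2$ in the symmetric algebra gives
$$
\phitilde(\theta_I^2)(x) = 2\,[c_x(\theta_I),\, b(\theta_I)].
$$
It then remains only to evaluate the two operations on $\theta_I = \sum_{i\in I}a_i\wedge b_i$. The bracket is immediate: $b(\theta_I) = \sum_{i\in I}[a_i,b_i]$, which is precisely the element of $\L_2(H)$ denoted $\theta_I$ in the target (recall the identification $u\wedge v \mapsto [u,v]$ under which the right-hand side $2[x,\theta_I]$ is read).

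The substantive step is the contraction $c_x(\theta_I)$. Expanding a general $x = \sum_k(c_k a_k + d_k b_k)$ and using the conventions $\langle a_i,b_k\rangle = \delta_{ik}$ and $\langle a_i,a_k\rangle = \langle b_i,b_k\rangle = 0$ (so $\langle b_i,a_k\rangle = -\delta_{ik}$), I would compute $\langle a_i,x\rangle = d_i$ and $\langle b_i,x\rangle = -c_i$, whence $c_x(\theta_I) = \sum_{i\in I}(c_i a_i + d_i b_i)$ is exactly the projection of $x$ onto $H_I$ along $H_I^c$. In particular $c_x(\theta_I)=0$ when $x\in H_I^c$ and $c_x(\theta_I)=x$ when $x\in H_I$. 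Substituting these into the displayed formula yields $\phitilde(\theta_I^2)(x)=2[0,b(\theta_I)]=0$ on $H_I^c$ and $\phitilde(\theta_I^2)(x)=2[x,b(\theta_I)]=2[x,\theta_I]$ on $H_I$, which is the claim. The only real obstacle is bookkeeping with the sign conventions for $\bil$: one must check that $c_x$ is genuinely the $H_I$-projection and not its negative, since that sign is what produces the $+2[x,\theta_I]$ in the statement. Everything else is formal once the rewriting $\phitilde(\alpha\beta)(x) = [c_x(\alpha),b(\beta)] + [c_x(\beta),b(\alpha)]$ is in hand.
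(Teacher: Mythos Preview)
Your argument is correct. The rewriting $\phitilde(\alpha\beta)(x)=[c_x(\alpha),b(\beta)]+[c_x(\beta),b(\alpha)]$ is exactly the defining formula, your extension by bilinearity is legitimate, and your identification of $c_x(\theta_I)$ with the projection of $x$ onto $H_I$ is checked correctly against the sign conventions.

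The paper's proof is essentially the same computation organized termwise: it sets $\theta_j=a_j\wedge b_j$, verifies directly that $\phitilde(\theta_j\theta_k)(x)=\delta_{i,j}[x,\theta_k]+\delta_{i,k}[x,\theta_j]$ for $x\in\Span\{a_i,b_i\}$, and then sums over $j,k\in I$. Your version packages the bilinear expansion into the observation that $c_x(\theta_I)$ is the $H_I$-projection of $x$, which is a slightly more conceptual way to see why the answer splits cleanly over $H_I$ and $H_I^c$; the paper's version is shorter but leaves that structure implicit. Either way the content is the same one-line verification.
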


\begin{proof}
Set $\theta_j = a_j \wedge b_j$. The result follows immediately from the easily
verified fact that $\phitilde(\theta_j\theta_k)$ takes $x \in \Span\{a_i,b_i\}$
to $\d_{i,j}[x,\theta_k] + \d_{i,k}[x,\theta_j]$.
\end{proof}

\begin{corollary}
The homomorphism $\phitilde$ induces a well-defined $\Sp(H)$-equivariant
homomorphism
$$
\varphi : S^2\Lambda^2 H \to \Gr^W_{-2}\Der\p.
$$
\end{corollary}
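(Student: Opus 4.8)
The plan is to prove the stronger statement that every derivation $\phitilde(\xi)$ annihilates the relator $\theta = \sum_{j}[a_j,b_j]$, so that $\phitilde(\xi)$ is a \emph{symplectic} derivation of $\L(H)$. A weight $-2$ derivation $D$ with $D(\theta)=0$ automatically preserves the ideal $(\theta)$: that ideal is spanned by iterated brackets $[\gamma_r,[\dots,[\gamma_1,\theta]\dots]]$, and applying $D$ via the Leibniz rule produces a sum in which the single term that would differentiate $\theta$ vanishes, while every remaining term still contains an undifferentiated factor of $\theta$ and hence lies in $(\theta)$. Consequently $\phitilde(\xi)$ descends to a derivation $\varphi(\xi)$ of $\Gr^W_\dot\p = \L(H)/(\theta)$, and this assignment is $\Sp(H)$-equivariant because $\phitilde$ is equivariant and $\theta$ is $\Sp(H)$-invariant. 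So the whole corollary reduces to the identity $\phitilde(\xi)(\theta)=0$.

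First I would record a basis-free formula for the value of a weight $-2$ derivation on $\theta$. Writing $D(x)=\sum_k \langle p_k,x\rangle\,q_k$ with $p_k\in H$ and $q_k\in\L_3(H)$, the Leibniz rule gives $D(\theta)=\sum_j\big([Da_j,b_j]+[a_j,Db_j]\big)$. Expanding and using the reproducing identity $w=\sum_j(\langle w,b_j\rangle a_j-\langle w,a_j\rangle b_j)$ for the symplectic form collapses the double sum: for each $k$ the inner sum over $j$ equals $[q_k,\sum_j\langle p_k,a_j\rangle b_j]+[\sum_j\langle p_k,b_j\rangle a_j,\,q_k]=[q_k,-p_k]$, so that
$$
D(\theta)=\sum_k [p_k,q_k].
$$
This turns the problem into a single bracket computation, independent of the chosen symplectic basis.

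Applying the formula to a decomposable generator $\xi=(u_1\wedge v_1)(u_2\wedge v_2)$, the four terms of $\phitilde(\xi)$ give
$$
[u_1,[v_1,[u_2,v_2]]]-[v_1,[u_1,[u_2,v_2]]]+[u_2,[v_2,[u_1,v_1]]]-[v_2,[u_2,[u_1,v_1]]].
$$
Two applications of the Jacobi identity rewrite the first pair as $[[u_1,v_1],[u_2,v_2]]$ and the second pair as $[[u_2,v_2],[u_1,v_1]]$, which cancel. Hence $\phitilde(\xi)(\theta)=0$ on decomposables, and by bilinearity on all of $S^2\Lambda^2 H$. The Lemma's formula for $\phitilde(\theta_I^2)$ is the special case $\xi=\theta_I^2$; there the same mechanism appears in the guise $D(\theta)=2[\theta_I,\theta_I]=0$, and I would cite it as a consistency check rather than as the engine of the proof.

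The only delicate point is the contraction yielding $D(\theta)=\sum_k[p_k,q_k]$: one must track the signs in the reproducing identity and keep in mind the identification $\L_2(H)\cong\Lambda^2 H$ under which $\theta$ corresponds to $\sum_j[a_j,b_j]$. Everything downstream is a mechanical double use of Jacobi, so I do not expect any genuine obstacle beyond this piece of bookkeeping.
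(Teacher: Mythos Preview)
Your argument is correct. You establish the stronger fact that every $\phitilde(\xi)$ is a \emph{symplectic} derivation, i.e.\ $\phitilde(\xi)(\theta)=0$, via the identity $D(\theta)=\sum_k[p_k,q_k]$ and a clean double application of Jacobi. The sign bookkeeping in the contraction step checks out, and once $D(\theta)=0$ the descent to $\L(H)/(\theta)$ and the equivariance are immediate.

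This is a genuinely different route from the paper's. The paper's one-line proof extracts from the preceding Lemma only the special value $\phitilde(\theta^2)=-2\ad_\theta$ and then asserts that ``this implies that $\phitilde$ preserves the ideal $(\theta)$''; the unstated bridge is $\Sp(H)$-equivariance together with the fact that $S^2\Lambda^2 H$ (whose irreducible constituents have highest weights among $2\lambda_2,\lambda_4,\lambda_2,0$) shares no irreducible factor with $\Gr^W_{-4}\p\cong V(\lambda_1+\lambda_3)\oplus V(2\lambda_1)\oplus V(2\lambda_1+\lambda_2)$, so the equivariant map $\xi\mapsto\phitilde(\xi)(\theta)\bmod(\theta)$ must vanish. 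Your approach trades that representation-theoretic step for a direct four-term bracket cancellation; it is more elementary, proves a sharper conclusion ($\phitilde(\xi)(\theta)=0$ rather than merely $\in(\theta)$), and does not rely on knowing the decomposition of $\Gr^W_{-4}\p$. The paper's approach, on the other hand, explains why the Lemma about $\phitilde(\theta_I^2)$ is placed where it is and foreshadows the later use of $\varphi(\theta''^2)$.
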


\begin{proof}
The lemma implies that $\phitilde(\theta^2) = -2\ad_\theta$. This implies that
$\phitilde$ preserves the ideal $(\theta)$ in $\L(H)$ and thus descends to a
derivation of $\Gr^W_\dot \p = \L(H)/(\theta)$ of weight $-2$.
\end{proof}

\begin{proposition}[Morita {\cite[Prop.~1.1]{morita:casson}}]
The image of $\log t_A$ in $\Gr^W_{-2}\Der \p$ is $\varphi(\theta'')/2$.
\end{proposition}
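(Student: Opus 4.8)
The plan is to compute the action of $t_A$ on $\pi_1(S,x)$ explicitly, pass to the associated graded Lie algebra $\Gr^W_\dot\p = \L(H)/(\theta)$, and identify the resulting weight $-2$ derivation with $\tfrac12\varphi\big((\theta'')^2\big)$ by invoking the preceding Lemma (here $\varphi(\theta'')$ is shorthand for $\varphi(\theta''\cdot\theta'')$, in line with the notation $\phitilde(\theta_I^2)$ of the Lemma). First I would record the standard description of how a Dehn twist about a separating curve acts on the fundamental group. Writing $\pi_1(S,x)$ as the amalgam of $\pi_1(S')$ and $\pi_1(S'')$ along the boundary circle $A$, with boundary loop $\omega$ freely homotopic to $\prod_{j>g'}[\alpha_j,\beta_j]$, the twist $t_A$ acts as the identity on the factor $\pi_1(S')$ containing the base point and as conjugation by $\omega$ on the factor $\pi_1(S'')$. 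I would justify this through the usual intersection–insertion formula for Dehn twists: the generators supported in $S'$ do not meet $A$ and are fixed, while a generator supported in $S''$ crosses $A$ with opposite signs on the way in and out, inserting $\omega^{\pm1}$ and thereby producing conjugation by $\omega$.

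Next I would pass to the Malcev Lie algebra. Since $t_A$ lies in the kernel of the first Johnson homomorphism, its induced automorphism of $\Gr^W_\dot\p$ is the identity on $H = \Gr^W_{-1}\p$, so the first nontrivial term is the weight $-2$ derivation $D$, determined by its values on the generators $H = H'\oplus H''$. On $H'$ the twist is trivial, whence $D|_{H'} = 0$. On $H''$, conjugation by $\omega$ induces the automorphism $\exp(\ad_{\log\omega})$ of $\Gr^W_\dot\p$; since the leading term $\Gr^W_{-2}\log\omega = \sum_{j>g'}[a_j,b_j] = \theta''$ (the higher Baker--Campbell--Hausdorff cross terms all having weight $\le -4$, and conjugation by the connecting path only affecting weights $\le -3$), the weight $-2$ part of $\exp(\ad_{\log\omega})$ is $\ad_{\theta''}$. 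Hence $D|_{H''} = \ad_{\theta''}$.

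Finally I would apply the Lemma with $I = \{g'+1,\dots,g\}$, so that $\theta_I = \theta''$, $H_I = H''$ and $H_I^c = H'$; it gives $\varphi\big((\theta'')^2\big)(x) = 0$ for $x\in H'$ and $2[x,\theta'']$ for $x\in H''$. Thus $\tfrac12\varphi\big((\theta'')^2\big)$ is precisely the derivation that vanishes on $H'$ and equals $\ad_{\theta''}$ on $H''$, up to the sign $[x,\theta''] = -[\theta'',x]$, which is absorbed by the orientation convention for the twist (equivalently, conjugation by $\omega$ versus $\omega^{-1}$). This matches $D$ and completes the identification.

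The main obstacle is the second step: pinning down that the weight $-2$ component of $\log t_A$ is exactly $\ad_{\theta''}$ on $H''$ and zero on $H'$. This requires care with the base point and the connecting paths in the amalgam description, and with checking that the leading term of $\log\omega$ is $\theta''$ with no contributions from lower weights entering in weight $-2$; once this is secured, the remainder is the bookkeeping of the preceding Lemma together with a single sign convention.
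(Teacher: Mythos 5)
Your strategy is essentially the same as the paper's: show $t_A$ fixes loops on the base-point side $S'$, acts on loops coming from $S''$ by conjugation by a boundary word, extract the weight $-2$ term of the logarithm, and match it against the Lemma via $\varphi$. The only methodological difference is how the group-level computation is transferred to the Lie algebra: the paper works in the completed group algebra $\Q\pi^\wedge$, using that $t_A$ lies in the Johnson kernel to get $(t_A-1)J^s\subseteq J^{s+2}$ and hence $\log(t_A)(\log\gamma)\equiv t_A(\gamma)-\gamma \bmod J^4$, whereas you use BCH in the Malcev Lie algebra; your version of this step (leading term of $\log\omega$ equal to $\theta''$, cross terms of weight $\le -4$) is sound.

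The genuine gap is the sign, and it cannot be ``absorbed by the orientation convention for the twist.'' The element $t_A$ is fixed (it is the positive twist of Figure~2), and the proposition asserts an equality with a definite sign; invoking the freedom to replace $\omega$ by $\omega^{-1}$ in order to make your answer agree with $\tfrac12\varphi\big((\theta'')^2\big)$ is circular --- it uses the statement being proved to decide the direction of conjugation. That direction must be computed, and this is exactly where the paper does real work: for $\gamma=\rho\mu\rho^{-1}$ one has $t_A(\gamma)=\rho\a^{-1}\mu\a\rho^{-1}=\b^{-1}\gamma\b$, where $\a$ is the positively oriented boundary of $S'$ and $\b=\rho\a\rho^{-1}$; since $\b^{-1}=\prod_{j\le g'}\{\a_j,\b_j\}$ is a product of commutators of \emph{$S'$-side} generators, its logarithm has leading term $\theta'$, and in $\p=\L(H)/(\theta)$ one has $\theta'=-\theta''$, so the restriction of $\log t_A$ to $H''$ is $-\ad_{\theta''}$, i.e.\ $x\mapsto[x,\theta'']$. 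By the Lemma this is exactly $\tfrac12\varphi\big((\theta'')^2\big)$ on $H''$, with no residual sign to explain away. In your setup this means the positive twist conjugates the $S''$ side by $\omega^{-1}$, not by $\omega$ (by the surface relation, $\omega^{-1}=\prod_{j\le g'}[\alpha_j,\beta_j]$ is the $S'$-side boundary word), so the middle step of your argument, as written, identifies the wrong conjugator and yields $-\tfrac12\varphi\big((\theta'')^2\big)$. Once the conjugator is pinned down as above, your BCH argument goes through and the appeal to convention becomes unnecessary.
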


\begin{proof}
Set $\pi = \pi_1(S,x)$.
First note that there are natural isomorphisms
$$
\Gr^W_{-2}\Der \p \cong \Hom(H,\L_3(H)/[H,\theta])
\cong \Hom(H_1(\pi),\Gr^3_\LCS\pi).
$$
Regard $\p$ as a subspace of the completion
$$
\Q\pi^\wedge := \varprojlim_m \Q\pi/J^m
$$
of the group algebra of $\pi$ with respect to the powers of its augmentation
ideal $J$, as explained in \cite[Appendix~A]{quillen}. The reason for working in
the completed group algebra is that it contains both $\pi$ and $\p$ and is acted
on by $\G_{g,1}$.

A standard and elementary fact is that the function $\pi \to J/J^2$ that takes
$\gamma$ to $(\gamma-1)+J^2$ is a homomorphism that induces an isomorphism $H =
H_1(\pi,\Q) \to J/J^2$. Since $T_{g,1}$ acts trivially on $H=J/J^2$, it follows
that $\sigma(\gamma)-\gamma \in J^2$ for all $\sigma \in T_{g,1}$ and
$\gamma\in\pi$. The induced homomorphism $T_{g,1} \to \Hom(J/J^2,J^2/J^3)$ is
essentially the Johnson homomorphism. (Cf.\ \cite[\S 11]{hain:normal}.) Since
$t_A$ lies in the kernel of the Johnson homomorphism,
$$
(t_A-1)(\gamma) = t_A(\gamma) - \gamma \in J^3
$$
for all $\gamma \in \pi$, so that $(t_A-1)^m : J^s\to J^{s+2m}$ for all $s,m>0$.
Consequently
\begin{align*}
\log(t_A)(\log(\gamma))
&=
\big(
(t_A-1) - (t_A -1)^2/2 + \cdots\big)\big((\gamma-1) - (\gamma-1)^2/2 + \cdots
\big)
\cr
&\equiv t_A(\gamma)-\gamma \bmod J^4
\end{align*}
We can thus compute the map
\begin{equation}
\label{eqn:logt}
\log t_A : J/J^2 \to J^3/J^4
\end{equation}
by computing $t_A(\gamma) - \gamma$ for a set of loops $\gamma$ that span $H$.

First, if $\gamma$ is the image of a loop in $(S',x)$, then
$t_A(\gamma)=\gamma$, which implies that the restriction of (\ref{eqn:logt}) to
$H'$ is trivial. To compute the restriction of (\ref{eqn:logt}) to $H''$, we
compute $t_A$ on loops $\gamma=\rho\mu\rho^{-1}$ that follow a fixed path $\rho$
in $S'$ from $x$ to a point $x'\in A$ and then follow a loop $\mu$ in $S''$
and return to $x$ along $\rho^{-1}$.

Denote by $\a$ the loop based at $x'$ that traverses $\partial S'$ in the
positive direction. Set $\b=\rho\a\rho^{-1}$. Then
$$
t_A(\gamma) = \rho\a^{-1}\mu\a \rho^{-1} =
\b^{-1}\gamma \b \equiv \{\b^{-1},\gamma\} + \gamma \bmod J^4,
$$
where $\{x,y\}$ denotes the group commutator $xyx^{-1}y^{-1}$.
Since
$$
\b^{-1} = \prod_{j=1}^{g'} \{\a_j,\b_j\},
$$
where $\a_j$, $\b_j$ is a standard set of generators of $\pi_1(S',x)$,
it follows
that
$$
\log t_A(\log \gamma) = t_A(\gamma) -\gamma
\equiv [\theta',\log\gamma]= -\ad_{\theta''}(\log\gamma).
$$
This implies that the restriction of $\log t_A$ to $H''$ is $-\ad_{\theta''}$.
\end{proof}

\subsection{A projection of $S^2\Lambda^2_0 H$ onto its $2\lambda_2$ isotypical
component}

The composite
$$
\xymatrix{
S^2\Lambda^2 H \ar[r]^(.45)\varphi & \Gr^W_{-2}\Der \p \ar[r] &
\Gr^W_{-2}\OutDer \p \cong V(2\lambda_2)
}
$$
is an $\Sp(H)$-equivariant map onto $V(2\lambda_2)$. The representation
$V(2\lambda_2)$ has multiplicity 1 in $S^2\Lambda^2 H$, and in $S^2\Lambda^2_0
H$. In this section, we compute a formula for the projection of $S^2\Lambda^2_0
H$ onto the $2\lambda_2$ isotypical factor.

Recall that $\Lambda^k_0 H$ denotes the $\Sp(H)$ module $\Lambda^k
H/(\theta\wedge \Lambda^{k-2}H)$. It is isomorphic to $V(\lambda_k)$. Note that
$\Lambda^2_0 H$ can be identified with the kernel of the polarization $\Lambda^2
H \to \Q$. The projection of $u\wedge v \in \Lambda^2 H$ onto this submodule is
$u\wedge v - \langle u,v \rangle\theta/g$.

\begin{lemma}
If $g\ge 2$ the cup product
$$
\mathrm{cup} : S^2 \Lambda^2_0 H \to \Lambda^4 H
$$
is a surjective $\Sp(H)$-module map. Its kernel is irreducible with highest
weight $2\lambda_2$. The $\Sp(H)$-module map
$$
\phi : \Lambda^4 H \to S^2 \Lambda^2_0 H
$$
that takes $x_1\wedge x_2 \wedge x_3\wedge x_4$ to
\begin{multline*}
\Big(x_1\wedge x_2 - \frac{\langle x_1,x_2\rangle}{g}\theta\Big)
\cdot \Big(x_3\wedge x_4 - \frac{\langle x_3,x_4\rangle}{g}\theta\Big)
\cr
- \Big(x_1\wedge x_3 - \frac{\langle x_1,x_3\rangle}{g}\theta\Big)
\cdot\Big(x_2\wedge x_4 - \frac{\langle x_2,x_4\rangle}{g}\theta\Big)
\cr
+ \Big(x_1\wedge x_4 - \frac{\langle x_1,x_4\rangle}{g}\theta\Big)
\cdot\Big(x_2\wedge x_3 - \frac{\langle x_2,x_3\rangle}{g}\theta\Big)
\end{multline*}
is injective. The restriction of $\mathrm{cup}\circ \phi :\Lambda^4 H \to
\Lambda^4 H$ to the component with highest weight $\lambda$ is multiplication by
$
\begin{cases}
3 & \lambda = \lambda_4,\ (g\ge 4\text{ only}),\cr
\frac{2g+2}{g} & \lambda = \lambda_2,\ (g\ge 3\text{ only}),\cr
\frac{2g+1}{g} & \lambda = 0.
\end{cases}
$
\end{lemma}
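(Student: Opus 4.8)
The plan is to reduce the entire lemma to one computation: that the $\Sp(H)$-equivariant endomorphism $\mathrm{cup}\circ\phi$ of $\Lambda^4 H$ acts as an invertible scalar on each isotypic component of $\Lambda^4 H$. First I would record the Lefschetz (primitive) decomposition
\[
\Lambda^4 H = \Lambda^4_0 H \oplus \theta\wedge\Lambda^2_0 H \oplus \theta^2\cdot\Q \cong V(\lambda_4) + V(\lambda_2) + V(0),
\]
in which $V(\lambda_4)$ occurs only for $g\ge 4$ and $V(\lambda_2)$ only for $g\ge 3$, alongside the decomposition $S^2\Lambda^2_0 H \cong V(2\lambda_2)+V(\lambda_4)+V(\lambda_2)+V(0)$ with every summand of multiplicity one (the multiplicity one of $V(2\lambda_2)$ is the fact recorded just before the lemma; the remaining summands come from the stable plethysm of the stability section). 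I would then verify that $\phi$ is well defined: the defining expression is alternating in $x_1,\dots,x_4$, so it factors through $\Lambda^4 H$; each factor $x_i\wedge x_j - \frac{\langle x_i,x_j\rangle}{g}\theta$ is primitive, since the contraction of $\theta$ against the form is $g$ and that of $x_i\wedge x_j$ is $\langle x_i,x_j\rangle$, so the image lies in $S^2\Lambda^2_0 H$; and since the only ingredients are the pairing and $\theta$, the map is $\Sp(H)$-equivariant.

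Next, by Schur's lemma $\mathrm{cup}\circ\phi$ is multiplication by a scalar on each of the multiplicity-one summands $V(\lambda_4), V(\lambda_2), V(0)$. The key structural observation is that dropping the $\theta$-corrections from $\phi$ and then applying $\mathrm{cup}$ sends a decomposable $x_1\wedge x_2\wedge x_3\wedge x_4$ to $3\,x_1\wedge x_2\wedge x_3\wedge x_4$, because each of the three pairings contributes $+1$ after reordering. Hence $\mathrm{cup}\circ\phi = 3\,\id + (\text{terms proportional to }\theta)$. Evaluating on the primitive vector $a_1\wedge a_2\wedge a_3\wedge a_4$ (all pairings zero) kills every correction, so the scalar on $V(\lambda_4)$ is exactly $3$.

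To extract the other two scalars I would evaluate $\mathrm{cup}\circ\phi$ on the test vectors $\theta\wedge\theta$ (which spans $V(0)$) and $\theta\wedge a_1\wedge a_2$ (which lies in $\theta\wedge\Lambda^2_0 H = V(\lambda_2)$), expanding $\theta = \sum_j a_j\wedge b_j$ and reading off the coefficient of the test vector. The single-$\theta$ corrections contribute factors of $\frac1g$ weighted by how often a basic pair $a_j\wedge b_j$ reappears, and the double-$\theta$ corrections contribute factors of $\frac1{g^2}$ weighted by the number of ordered pairs $j\ne k$; assembling these with the leading $3$ yields $\frac{2g+1}{g}$ for $V(0)$ and $\frac{2g+2}{g}$ for $V(\lambda_2)$. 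This bookkeeping is the main obstacle: because $\theta$ is not decomposable, one must sum the corrections over a symplectic basis and keep track of the resulting combinatorial factors ($g-1$, $g-2$, and $g(g-1)$), which is exactly where the genus-dependence of the scalars originates.

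Finally, since all of these scalars are nonzero for the relevant $g$, the endomorphism $\mathrm{cup}\circ\phi$ is an automorphism of $\Lambda^4 H$; this immediately gives injectivity of $\phi$ and surjectivity of $\mathrm{cup}$. Because $\mathrm{cup}$ is equivariant and $V(2\lambda_2)$ does not occur in $\Lambda^4 H$, that summand lies in the kernel, while the scalar computation forces $\mathrm{cup}$ to be an isomorphism on the complementary summands $V(\lambda_4)+V(\lambda_2)+V(0)$; comparing with the decomposition of $S^2\Lambda^2_0 H$ identifies the kernel with the irreducible $V(2\lambda_2)$, completing the proof.
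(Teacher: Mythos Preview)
Your proposal is correct and follows essentially the same approach as the paper: establish the decompositions of $\Lambda^4 H$ and $S^2\Lambda^2_0 H$, then use Schur's lemma to reduce to computing $\mathrm{cup}\circ\phi$ on highest weight vectors (your test vectors $a_1\wedge a_2\wedge a_3\wedge a_4$, $\theta\wedge a_1\wedge a_2$, $\theta\wedge\theta$ are precisely those the paper uses), and deduce injectivity, surjectivity, and the identification of the kernel from the nonvanishing of the scalars. You supply more of the bookkeeping (well-definedness of $\phi$, the ``drop the $\theta$-corrections'' observation giving the leading $3$) than the paper, which simply writes ``Details are left to the reader'', but the strategy is identical.
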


\begin{proof}
Kabanov's stability theorem \cite{kabanov:stability} implies that the highest
weight decomposition of $S^2 \Lambda^2_0 H$ is independent of $g$ when $g\ge
4$. One can show (using \LiE\ or by direct computation) that $S^2\Lambda^2_0 H$
is abstractly isomorphic to $V(2\lambda_2)\oplus \Lambda^4 H$ as an
$\Sp(H)$-module for all $g\ge 2$.

The cup product and $\phi$ are both $\Sp(H)$-equivariant. All assertions follow
from the last statement. Schur's Lemma implies that the restriction of
$\mathrm{cup}\circ \phi$ to each irreducible component of $\Lambda^4 H$ is
multiplication by a scalar. These scalars can be determined by computing the
composite on a highest weight vector of each irreducible component of $\Lambda^4
H$. These are $\theta^2$ ($g\ge 2$), $a_1\wedge a_2\wedge \theta$ ($g\ge 3$),
and $a_1\wedge a_2 \wedge a_3 \wedge a_4$ ($g\ge 4$). These are easily
computed. Details are left to the reader.
\end{proof}

\begin{corollary}
The projection of $(a_j\wedge b_j - \theta/g)^2 \in S^2 \Lambda^2_0 H$ onto the
weight $2\lambda_2$ isotypical component is
$$
(a_j\wedge b_j - \theta/g)^2
+ \frac{1}{g+1}\phi\Big(
a_j\wedge b_j \wedge \theta
-\frac{1}{(2g+1)}\theta^2
\Big)
$$
for all $g\ge 2$ and $1\le j \le g$.
\end{corollary}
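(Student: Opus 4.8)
The plan is to bypass cohomology entirely and pin down the projection using only the splitting and the eigenvalues furnished by the preceding Lemma. Write $\mathrm{cup}$ and $\phi$ as in the Lemma, and abbreviate the proposed answer by $P := w + \tfrac{1}{g+1}\phi\big(a_j\wedge b_j\wedge\theta - \tfrac{1}{2g+1}\theta^2\big)$, where $w := (a_j\wedge b_j - \theta/g)^2$. First I would record the structural consequences of the Lemma: since the scalars $3$, $(2g+2)/g$, $(2g+1)/g$ by which $\mathrm{cup}\circ\phi$ acts on the three isotypic pieces of $\Lambda^4 H$ are all nonzero, $\mathrm{cup}\circ\phi$ is an isomorphism, so $\phi$ is injective and $\mathrm{cup}$ restricts to an isomorphism $\Im(\phi)\xrightarrow{\sim}\Lambda^4 H$. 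A trivial intersection argument together with the dimension count $S^2\Lambda^2_0 H \cong V(2\lambda_2)\oplus\Lambda^4 H$ then shows that $\Im(\phi)$ is an $\Sp(H)$-stable complement to $\ker(\mathrm{cup})\cong V(2\lambda_2)$. Because $V(2\lambda_2)$ occurs with multiplicity one and $\Im(\phi)\cong\Lambda^4 H$ contains no copy of it, the canonical projection onto the $2\lambda_2$-isotypic component is exactly the identity on $\ker(\mathrm{cup})$ and zero on $\Im(\phi)$.

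Consequently the whole statement reduces to the single check $\mathrm{cup}(P)=0$: by construction $P - w = \tfrac1{g+1}\phi(\cdots)\in\Im(\phi)$, so once $\mathrm{cup}(P)=0$ we get $P\in\ker(\mathrm{cup})=V(2\lambda_2)$ and $w-P\in\Im(\phi)$, which is precisely the assertion that $P$ is the desired projection. To verify this I would set $p := a_j\wedge b_j - \theta/g\in\Lambda^2_0 H$, so $w = p\cdot p$, and introduce the Lefschetz operator $L_\theta := \theta\wedge(-)$. Since $p$ lies in $\Lambda^2_0 H\cong V(\lambda_2)$ and $L_\theta$ is $\Sp(H)$-equivariant, $L_\theta(p)$ lies in the $V(\lambda_2)$-isotypic part of $\Lambda^4 H$, while $\theta^2 = L_\theta^2(1)$ spans the $V(0)$ part (for $g=2$ one has $L_\theta(p)=0$, which causes no trouble). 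Both $\mathrm{cup}(w)$ and the argument of $\phi$ in $P$ therefore lie in $V(\lambda_2)\oplus V(0)$, so only the two eigenvalues $(2g+2)/g$ and $(2g+1)/g$ are relevant.

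The remaining computation is then purely arithmetic. Using $(a_j\wedge b_j)\wedge(a_j\wedge b_j)=0$ and $a_j\wedge b_j\wedge\theta = L_\theta(p)+\tfrac1g L_\theta^2(1)$, one expands $\mathrm{cup}(w)=p\wedge p = -\tfrac2g L_\theta(p)-\tfrac1{g^2}L_\theta^2(1)$. On the other side, $a_j\wedge b_j\wedge\theta - \tfrac1{2g+1}\theta^2 = L_\theta(p) + \tfrac{g+1}{g(2g+1)}L_\theta^2(1)$; applying the eigenvalues of the Lemma to the two pieces and multiplying by $\tfrac1{g+1}$ gives $\tfrac1{g+1}(\mathrm{cup}\circ\phi)(\cdots)=\tfrac2g L_\theta(p)+\tfrac1{g^2}L_\theta^2(1)$, which is exactly $-\mathrm{cup}(w)$. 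Hence $\mathrm{cup}(P)=0$, as required.

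The representation theory is already packaged into the Lemma, so the only place demanding care is the bookkeeping of the scalar coefficients in the last paragraph: the entire proof hinges on the two contributions cancelling \emph{identically} in both the $V(\lambda_2)$ and the $V(0)$ summand, and this must be checked coefficient-by-coefficient. I expect this arithmetic—together with the clean handling of the degenerate $g=2$ case, where the $V(\lambda_2)$ term silently drops out—to be the only real obstacle, everything else being formal consequences of the splitting established in the first paragraph.
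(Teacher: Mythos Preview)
Your proposal is correct and follows essentially the same route as the paper: both arguments reduce the claim to checking that $\mathrm{cup}(P)=0$ by using the Lemma's eigenvalues for $\mathrm{cup}\circ\phi$ on the $V(\lambda_2)$ and $V(0)$ pieces of $\Lambda^4 H$, and both carry out the identical arithmetic (the paper writes $a_j\wedge b_j\wedge\theta - \theta^2/g$ and $\theta^2$ where you write $L_\theta(p)$ and $L_\theta^2(1)$). Your write-up is more explicit about why $\Im(\phi)$ is the complement to $\ker(\mathrm{cup})$ and about the degenerate $g=2$ case, but the underlying computation is the same.
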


\begin{proof}
The image of $(a_j\wedge b_j - \theta/g)^2$ in $\Lambda^4 H$ is
\begin{align*}
-2\,a_j\wedge b_j \wedge \theta/g + \theta^2/g^2
&=-\frac{2}{g}(a_j\wedge b_j \wedge \theta - \theta^2/g) - \theta^2/g^2\cr
&= -\mathrm{cup}\Big(\frac{1}{g+1}\phi(a_j\wedge b_j \wedge \theta - \theta^2/g)
+ \frac{1}{g(2g+1)}\phi(\theta^2)\Big) \cr
&= -\mathrm{cup}\circ\phi\Big(
\frac{1}{g+1}a_j\wedge b_j \wedge \theta
-\frac{1}{(g+1)(2g+1)}\theta^2
\Big)
\end{align*}
as the first term of the right-hand side lies in the $V(\lambda_2)$ component
and the second in the trivial component.
\end{proof}

Denote the projection of $x\in \Lambda^3 H$ (resp.\ $y \in S^2\Lambda^2_0 H$)
onto $\Lambda^3_0 H$ (resp.\ its $2\lambda_2$ isotypical component) by
$\overline{x}$ (resp.\ $\overline{y}$). Combining the previous result with
(\ref{eqn:johnson}), we obtain:

\begin{corollary}
For all $g\ge 3$, the images of $\log t_A \in W_{-1}\t_3$ and $\log t_B\in
W_{-2}\t_g$ in $\Gr^W_\dot\OutDer\p$ are
$$
\overline{\log t_A} = \overline{(a_2\wedge b_2-\theta/g)^2}
\in V(2\lambda_2) \cong \Gr^W_{-2}\OutDer\p
$$
and
$$
\overline{\log t_B} = \overline{a_2\wedge b_2\wedge a_1}
\in \Lambda^3_0 H \cong \Gr^W_{-1}\OutDer\p. \qed
$$
\end{corollary}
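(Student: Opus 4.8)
The statement is the assembly of the two preceding computations, so my plan is to dispose of the $\log t_B$ term immediately and concentrate the work on the $\log t_A$ term. For $\log t_B$ there is nothing to prove: equation (\ref{eqn:johnson}) already records that, under the Johnson isomorphism $\Gr^W_{-1}\OutDer\p\cong\Lambda^3_0 H$, the image of $\log t_B$ is $\overline{a_2\wedge b_2\wedge a_1}$, and I would simply quote it. So the content lies entirely in identifying $\overline{\log t_A}$ inside $\Gr^W_{-2}\OutDer\p\cong V(2\lambda_2)$.

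For the $\log t_A$ term I would start from Morita's Proposition \cite[Prop.~1.1]{morita:casson}, which gives the image of $\log t_A$ in $\Gr^W_{-2}\Der\p$ as $\varphi((\theta'')^2)/2$, where $\theta''$ is the polarization of the subsurface complementary to the one containing the basepoint. The first step is to check, directly from the defining formula for $\phitilde$, that $\phitilde(\theta\cdot(u\wedge v))(x)\equiv[x,[u,v]]=-\ad_{[u,v]}(x)$ modulo the ideal $(\theta)$; since this is an inner derivation, the projection $q\colon\Gr^W_{-2}\Der\p\to\Gr^W_{-2}\OutDer\p$ annihilates $\varphi(\theta\cdot\Lambda^2 H)$. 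Because $\theta\cdot\Lambda^2 H$ is exactly the kernel of $S^2\Lambda^2 H\to S^2\Lambda^2_0 H$, this has two consequences I need: first, since $(\theta'')^2$ and $(a_2\wedge b_2)^2$ differ by an element of $\theta\cdot\Lambda^2 H$ (using $\theta=\theta'+\theta''$ and $\theta''$ equal to $a_2\wedge b_2$ or to $\theta-a_2\wedge b_2$), the answer is independent of where the basepoint sits and equals $\tfrac12\,q\varphi((a_2\wedge b_2)^2)$; second, $(a_2\wedge b_2)^2$ and $(a_2\wedge b_2-\theta/g)^2$ likewise differ by a $\theta$-divisible term, so $q(\log t_A)=\tfrac12\,q\varphi\big((a_2\wedge b_2-\theta/g)^2\big)$, with the argument now living in $S^2\Lambda^2_0 H$.

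The second step is to see that this is the claimed $2\lambda_2$-projection. By the preceding Lemma, $S^2\Lambda^2_0 H\cong V(2\lambda_2)\oplus\Lambda^4 H$ with $V(2\lambda_2)$ occurring once, and by Corollary~\ref{cor:outder} the target $\Gr^W_{-2}\OutDer\p$ is the irreducible $V(2\lambda_2)$. Since $q\varphi$, restricted to $S^2\Lambda^2_0 H$, is $\Sp(H)$-equivariant and surjective, Schur's Lemma forces it to kill the $\Lambda^4 H$-summand and to restrict to an isomorphism on the $2\lambda_2$-summand; hence $q\varphi(w)=q\varphi(\overline{w})$ for every $w\in S^2\Lambda^2_0 H$, where $\overline{w}$ is the $2\lambda_2$-component computed explicitly in the preceding Corollary. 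Applying this to $w=(a_2\wedge b_2-\theta/g)^2$ and identifying $\Gr^W_{-2}\OutDer\p$ with $V(2\lambda_2)$ yields $\overline{\log t_A}=\overline{(a_2\wedge b_2-\theta/g)^2}$.

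The one point requiring care, and the only real obstacle, is bookkeeping of scalars: Morita's factor $\tfrac12$, the scalar by which $q\varphi$ acts on the $2\lambda_2$-summand, and the normalization implicit in the identification $V(2\lambda_2)\cong\Gr^W_{-2}\OutDer\p$ all have to be reconciled for a literal equality. For the downstream application in Proposition~\ref{prop:sub-mod} only the line spanned by $\overline{\log t_A}$ is used, so I would be content to establish the equality up to a nonzero scalar and fold that scalar into the chosen identification rather than pin every constant down.
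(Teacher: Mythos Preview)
Your proposal is correct and is essentially the argument the paper intends: the Corollary is stated with a \qed and the text says it is obtained by ``combining the previous result with (\ref{eqn:johnson})'', so you are simply filling in the gluing step between Morita's formula in $\Der\p$ and the $V(2\lambda_2)$-projection in $S^2\Lambda^2_0 H$. Your explicit verification that $\varphi(\theta\cdot(u\wedge v))$ is the inner derivation $-\ad_{[u,v]}$ (hence dies in $\OutDer\p$) is exactly the missing link the paper leaves implicit, and your Schur's Lemma argument that the equivariant surjection $q\varphi\colon S^2\Lambda^2_0 H\to V(2\lambda_2)$ factors through the $2\lambda_2$-projection is the right way to finish. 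Your honesty about the scalar ambiguity is appropriate: the paper absorbs it into the identification, and as you note, only the line matters for Proposition~\ref{prop:sub-mod}.
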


\subsection{Proof of Proposition~\ref{prop:sub-mod}}

As explained in the introduction to this section, we want to compute the
$\Sp(H)$-submodule of
$$
\Gr^W_{-2}\u_3 \otimes \Gr^W_{-1}\u_3 \cong V(2\lambda_2)\otimes V(\lambda_3)
$$
generated by the image of $\log t_A\otimes \log t_B$.

The following computation is equivalent to Sakasai's computation
{\cite[Lem.~6.1]{sakasai}}. It can be proved using $\LiE$\ plus Kabanov
stability.

\begin{lemma}
\label{lem:tensorprod}
The irreducible factors of $V(2\lambda_2)\otimes V(\lambda_3)$ have highest
weights
{\tiny$$
\begin{cases}
2\lambda_1+\lambda_3,\ \lambda_1+2\lambda_2,\
\lambda_1+\lambda_2,\ 2\lambda_2+\lambda_3,\ \lambda_3 & g=3, \cr
2\lambda_1+\lambda_3,\ \lambda_1+2\lambda_2,\ \lambda_1+\lambda_2+\lambda_4,\
\lambda_1+\lambda_2,\ \lambda_1 + \lambda_4,\ 2\lambda_2+\lambda_3,\ \lambda_2
+\lambda_3,\ \lambda_3 & g=4,\cr 
2\lambda_1+\lambda_3,\ \lambda_1+2\lambda_2,\ \lambda_1+\lambda_2+\lambda_4,\
\lambda_1+\lambda_2,\ \lambda_1 + \lambda_4,\ 2\lambda_2+\lambda_3,\ \lambda_2
+\lambda_3,\ \lambda_2+\lambda_5,\ \lambda_3 & g\ge 5.
\end{cases}
$$
}
Each has multiplicity 1. Consequently, there are unique (up to scalar
multiplication) projections of $V(2\lambda_2)\otimes V(\lambda_3)$ onto
$V(\lambda_3)$, $V(\lambda_1+\lambda_2)$ and $V(2\lambda_2+\lambda_3)$. \qed
\end{lemma}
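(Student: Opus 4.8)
The plan is to reduce to a finite computation by means of Kabanov's stability theorem \cite{kabanov:stability} and then to carry out the resulting decompositions, conveniently with \LiE. The two tensor factors have highest weights $2\lambda_2$ and $\lambda_3$, corresponding to the partitions $(2,2)$ and $(1,1,1)$; since their row counts sum to $5$, one expects the decomposition of $V(2\lambda_2)\otimes V(\lambda_3)$ to be independent of $g$ once $g\ge 5$, which is precisely the uniform shape of the $g\ge 5$ list. It therefore suffices to compute the product at a single rank in the stable range and then to analyze the two unstable ranks $g=3$ and $g=4$ by hand.

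For the stable case $g\ge 5$ I would compute the decomposition once, say by running \LiE\ at rank $6$, and read off the nine irreducible constituents asserted, each with multiplicity one. As an independent check one verifies that the dimensions balance, i.e.\ that $\dim V(2\lambda_2)\cdot \dim V(\lambda_3)$ equals the sum of the dimensions of the listed summands. By Kabanov stability this single computation is valid for \emph{every} $g\ge 5$.

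Next I would treat the unstable ranks. The naive guess is that the rank-$g$ decomposition is obtained from the stable list by discarding every weight whose associated partition has more than $g$ rows. For $g=4$ this is correct: only $\lambda_2+\lambda_5$, whose partition $(2,2,1,1,1)$ has five rows, is discarded, and a direct \LiE\ run at rank $4$ confirms that the remaining eight constituents are exactly as listed. For $g=3$, however, naive truncation \emph{overcounts}: besides the expected disappearance of $\lambda_1+\lambda_2+\lambda_4$, $\lambda_1+\lambda_4$, and $\lambda_2+\lambda_5$, the weight $\lambda_2+\lambda_3$---whose partition $(2,2,1)$ has exactly three rows and so is a legitimate $\Sp_3$-weight---nonetheless fails to appear. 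This cancellation at the boundary $g=(\text{number of rows})$ is a genuine small-rank effect dictated by the symplectic modification rules, and I expect it to be the main obstacle. It can be resolved either by applying those modification rules directly (the vanishing arises from the modification of a four-row partition, which folds back onto $(2,2,1)$ with a sign) or, more simply, by computing the rank-$3$ product from scratch in \LiE, where the five listed constituents emerge with multiplicity one.

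Finally, inspection of the three lists shows that each irreducible constituent occurs with multiplicity one, which gives the multiplicity statement. In particular $V(\lambda_3)$, $V(\lambda_1+\lambda_2)$, and $V(2\lambda_2+\lambda_3)$ each appear with multiplicity one in $V(2\lambda_2)\otimes V(\lambda_3)$ for every $g\ge 3$, so by Schur's lemma the projection of the tensor product onto each of them is unique up to a scalar.
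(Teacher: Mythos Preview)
Your proposal is correct and follows exactly the approach the paper indicates: the paper's own ``proof'' is just the sentence preceding the lemma, namely that the computation is equivalent to Sakasai's \cite[Lem.~6.1]{sakasai} and can be done using \LiE\ plus Kabanov stability, after which it simply writes \qed. Your write-up is a fleshed-out version of that sketch, including the useful observation that at $g=3$ the summand $V(\lambda_2+\lambda_3)$ disappears via the symplectic modification rules rather than by naive truncation.
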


Regard $V(2\lambda_2)$ as the kernel of the cup product $S^2\Lambda^2_0 H \to
\Lambda^4 H$. Regard $V(\lambda_3)=\Lambda^3_0 H$ as the kernel of the
contraction
$$
c : \Lambda^3 H \to H,\quad u\wedge v \wedge w \mapsto
\langle u,v \rangle w + \langle v,w \rangle u + \langle w,u \rangle v
$$
The projection of $x\in\Lambda^3 H$ onto $\Lambda^3_0 H$ is $x - c(x)\wedge
\theta/(g-1)$. One also has the injection (the ``Jacobi identity map'')
$$
\Lambda^3 H \to \Lambda^2 H \otimes H,\quad u\wedge v \wedge w \mapsto 
(u\wedge v)\otimes w + (v\wedge w)\otimes u + (w\wedge u) \otimes v
$$
and the standard imbedding $v_1v_1 \mapsto v_1\otimes v_2 + v_2 \otimes v_1$ of
$S^2\Lambda^2 H$ into $(\Lambda^2 H)^{\otimes 2}$. Finally, one has the pairing
$$
(\blank,\blank) : \Lambda^2 H\otimes \Lambda^2 S^2 H \to \Q.
$$
$$
(u_1\wedge v_1, u_2 \wedge v_2) =
\left|
\begin{matrix}
\langle u_1, v_1 \rangle & \langle u_1, v_2 \rangle \cr
\langle u_2, v_1 \rangle & \langle u_2, v_2 \rangle
\end{matrix}\right|
$$

All of these maps are $\Sp(H)$-equivariant. They can be assembled into the
$\Sp(H)$-equivariant map:
\begin{equation}
\label{eqn:projection}
V(2\lambda_2)\otimes V(\lambda_3)\hookrightarrow
S^2\Lambda^2 H \otimes \Lambda^3 H \hookrightarrow
\Lambda^2 H \otimes (\Lambda^2 H \otimes \Lambda^2 H) \otimes H
\overset{1\otimes\det\otimes 1}{\longrightarrow} \Lambda^2 H \otimes H.
\end{equation}
This can be composed with the multiplication map $\Lambda^2 H \otimes H \to
\Lambda^3 H$ to obtain a projection to $V(2\lambda_2)\otimes V(\lambda_3) \to
\Lambda^3 H$.

\begin{lemma}
\label{lem:span}
For all $g\ge 3$, the $\Sp_g$-submodule $W$ of $V(\lambda_2)\otimes
V(\lambda_3)$ generated by
$$
v := \overline{\log t_A} \otimes \overline{\log t_B} = 
\overline{(a_2\wedge b_2-\theta/g)^2} \otimes \overline{a_2\wedge b_2\wedge a_1}
$$
contains the factors with highest weights $2\lambda_2+\lambda_3$,
$\lambda_1+\lambda_3$ and $\lambda_3$. 
\end{lemma}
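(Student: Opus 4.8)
I first reduce to a non-vanishing problem. By Lemma~\ref{lem:tensorprod} every irreducible constituent of $V(2\lambda_2)\otimes V(\lambda_3)$ occurs with multiplicity one, so a constituent $V(\mu)$ lies in the cyclic submodule $W=U(\sp(H))\cdot v$ precisely when the isotypic projection of $v$ onto $V(\mu)$ is non-zero; and this is certified by exhibiting a single $\Sp(H)$-equivariant map out of the tensor product that is non-zero on the $V(\mu)$-summand and does not kill $v$. A preliminary remark on the statement as worded: the printed list of highest weights includes $\lambda_1+\lambda_3$, but in the coordinates $(t_1,t_2,t_3)$ on $\h$ one has $2\lambda_2=(2,2,0)$ and $\lambda_1+\lambda_3=(2,1,1)$, and the difference $(0,-1,1)$ is not a weight of $V(\lambda_3)$ (whose weights are the eight $(\pm1,\pm1,\pm1)$ together with $\pm(1,0,0),\pm(0,1,0),\pm(0,0,1)$). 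By the standard necessary condition for a constituent of a tensor product, $V(\lambda_1+\lambda_3)$ is therefore not even a constituent of $V(2\lambda_2)\otimes V(\lambda_3)$ and cannot lie in $W$; the intended third constituent is $V(\lambda_1+\lambda_2)$, in agreement with Lemma~\ref{lem:tensorprod} and Proposition~\ref{prop:sub-mod}, and it is this one the plan produces.

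For the two ``lower'' constituents $V(\lambda_3)$ and $V(\lambda_1+\lambda_2)$ I would use the equivariant map $P\colon V(2\lambda_2)\otimes V(\lambda_3)\to\Lambda^2 H\otimes H$ of (\ref{eqn:projection}) and its composite $P'$ with the multiplication $\Lambda^2 H\otimes H\to\Lambda^3 H$ followed by the projection onto $\Lambda^3_0 H$. As $\Sp(H)$-modules $\Lambda^2 H\otimes H\cong V(\lambda_1+\lambda_2)\oplus V(\lambda_3)\oplus 2\,V(\lambda_1)$ and $\Lambda^3_0 H\cong V(\lambda_3)$, so on the source $P$ is injective on the $V(\lambda_1+\lambda_2)\oplus V(\lambda_3)$ part and kills the other three constituents, while $P'$ factors through the $V(\lambda_3)$ part. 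The plan is to compute $P(v)$ explicitly, substituting for $\overline{\log t_A}$ the $2\lambda_2$-projection from the preceding corollary (expressed through $\varphi$ and $\theta''$) and for $\overline{\log t_B}$ the formula $\overline{a_2\wedge b_2\wedge a_1}=a_2\wedge b_2\wedge a_1-a_1\wedge\theta/(g-1)$, then applying the determinant pairing $1\otimes\det\otimes 1$. It then suffices to check that $P'(v)\in\Lambda^3_0 H$ is non-zero (certifying $V(\lambda_3)\subseteq W$) and that $P(v)$ is non-zero modulo its $V(\lambda_3)$-summand (certifying $V(\lambda_1+\lambda_2)\subseteq W$); in practice each reduces to exhibiting one non-zero coefficient, e.g. of $a_1\wedge a_2\wedge b_2$ and of $(a_1\wedge a_2)\otimes b_2$ respectively. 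This is the bulk of the calculation, but it is routine.

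The genuinely separate step, and the one I expect to be the main obstacle, is the Cartan constituent $V(2\lambda_2+\lambda_3)$: its highest weight $(3,3,1)$ exceeds the top weight $\lambda_1+\lambda_2=(2,1,0)$ of $\Lambda^2 H\otimes H$, so both $P$ and $P'$ send it to zero and detect nothing. Here the plan is to act directly with the raising operators $F_{i,j},S_{i,j}$ of Section~\ref{sec:commuting}. The weight-$(3,3,1)$ space of $V(2\lambda_2)\otimes V(\lambda_3)$ is one-dimensional, spanned by the tensor of highest-weight vectors $\xi:=(a_1\wedge a_2)^2\otimes(a_1\wedge a_2\wedge a_3)$, and among all constituents only $V(2\lambda_2+\lambda_3)$ contains this weight, since the highest weights of the others ($2\lambda_1+\lambda_3$, $\lambda_1+2\lambda_2$, $\lambda_1+\lambda_2$, $\lambda_3$) do not dominate $(3,3,1)$. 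As $v$ has weight $\lambda_1=(1,0,0)$, I would choose an explicit product $E$ of weight $(2,3,1)$ in the $F_{i,j},S_{i,j}$ (for example $F_{1,2}^{2}F_{2,3}$, the factor $F_{2,3}$ being forced since it is what introduces the index-$3$ content of $\xi$) and compute $E\cdot v$ by the Leibniz rule. Because the lower constituents have no weight-$(3,3,1)$ vector, the entire weight-$(3,3,1)$ part of $E\cdot v$ comes from the $V(2\lambda_2+\lambda_3)$-component of $v$; thus $E\cdot v\in\Q\,\xi$, and showing $E\cdot v\neq0$ certifies $\xi\in W$, hence $V(2\lambda_2+\lambda_3)=U(\sp(H))\cdot\xi\subseteq W$. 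The hard part will be ruling out cancellation: unlike the two lower factors, the Cartan component of a decomposable tensor $\overline{\log t_A}\otimes\overline{\log t_B}$ need not be non-zero a priori, so I must track the single coefficient of $\xi$ in $E\cdot v$ and verify it is a non-zero function of $g$. Should the first choice of $E$ annihilate $\xi$, I would vary $E$ within weight $(2,3,1)$, since whenever the component is present some such $E$ must be non-zero on it.

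Combining the three non-vanishing statements, and using that the highest weights $2\lambda_2+\lambda_3$, $\lambda_1+\lambda_2$, $\lambda_3$ are distinct and of multiplicity one, gives $W\supseteq V(2\lambda_2+\lambda_3)\oplus V(\lambda_1+\lambda_2)\oplus V(\lambda_3)$, which is the assertion. Finally, the maps $P$, $P'$ and the operators $E$ are defined uniformly for every $g\ge3$ and the coefficients produced are polynomial in $g$ over $g(g-1)$; by Kabanov stability it therefore suffices to verify the non-vanishing in a single genus in the stable range, so the genus-$3$ and stable computations may be used interchangeably, yielding the result for all $g\ge3$.
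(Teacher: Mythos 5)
Your overall architecture is the paper's own: you correctly repair the statement's typos (the ambient module is $V(2\lambda_2)\otimes V(\lambda_3)$, and $\lambda_1+\lambda_3$ must be $\lambda_1+\lambda_2$, exactly as Lemma~\ref{lem:tensorprod} and Proposition~\ref{prop:sub-mod} confirm); you reduce to non-vanishing of isotypic components of the cyclic vector $v$ via multiplicity one; you detect the Cartan constituent with the very operator the paper uses (since $e_1+2e_2+e_3$ is not a root, $F_{1,2}$ and $F_{2,3}$ commute, so your $F_{1,2}^2F_{2,3}$ is the paper's $F_{2,3}\circ F_{1,2}^2$, and the paper finds $F_{2,3}\circ F_{1,2}^2(v)=2(a_1\wedge a_2)^2\otimes(a_1\wedge a_2\wedge a_3)\neq 0$, so the cancellation you worry about does not occur); and you use the projection (\ref{eqn:projection}) for the lower constituents, your direct test $P'(v)\neq 0$ for $V(\lambda_3)$ being a legitimate variant since the target $\Lambda^3_0H$ is irreducible. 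Two steps, however, would fail as written.

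First, your practical test for $V(\lambda_1+\lambda_2)$ --- ``exhibit a non-zero coefficient of $(a_1\wedge a_2)\otimes b_2$ in $P(v)$'' --- does not certify that $P(v)$ is non-zero modulo the $V(\lambda_3)$-summand of $\Lambda^2H\otimes H$. That summand is not spanned by monomials: its weight-$\lambda_1$ vectors, e.g.\ the Jacobi image $(a_1\wedge a_2)\otimes b_2+(a_2\wedge b_2)\otimes a_1+(b_2\wedge a_1)\otimes a_2$ of $a_1\wedge a_2\wedge b_2$, have non-zero coefficient on precisely that monomial, so at weight $\lambda_1$ the coefficient you propose to check mixes the $\lambda_3$- and $(\lambda_1+\lambda_2)$-components and certifies neither. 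The paper sidesteps this by applying $F_{1,2}$ to $v$ \emph{before} projecting: the output then has weight $(2,1,0)$, which occurs in $\Lambda^2H\otimes H$ only inside the $V(\lambda_1+\lambda_2)$-constituent, so any non-zero output is conclusive; the computation yields $-\frac{(g-2)(2g+1)}{g^2-1}\,(a_1\wedge a_2)\otimes a_1$.

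Second, your closing appeal to Kabanov stability is invalid for exactly the case that matters. Stability identifies highest weight decompositions for large $g$; it does not propagate non-vanishing of scalars down to $g=3$, which lies below the stable range and is the genus on which Theorems~\ref{thm:main_mcg} and~\ref{thm:main_torelli} turn. The scalars are rational functions of $g$ that can have small-genus zeros --- the coefficient displayed above vanishes at $g=2$ --- so verifying non-vanishing at one stable genus proves nothing at $g=3$. The paper instead computes each scalar as an explicit function of $g$, for instance $(g-1)(4g^2+12g+3)/\big(g(g+1)(2g+1)\big)$ for the $\lambda_3$-piece, and observes it is non-zero for every $g\ge 3$. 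With these two repairs (raise by $F_{1,2}$ before projecting, and track the $g$-dependence explicitly rather than by stability), your plan coincides with the paper's proof.
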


\begin{proof}
Set $\theta_j = a_j \wedge b_j$, so that $\theta = \sum_j \theta_j$. Note
that $(\theta_j,\theta_k) = \delta_{j,k}$. Since
$$
F_{2,3}\circ F_{1,2}^2(v) = 
2(a_1\wedge a_2)^2\otimes (a_1\wedge a_2 \wedge a_3) \in W
$$
is a highest weight vector of weight $2\lambda_2+\lambda_3$, $W$ contains the
copy of $V(2\lambda_2+\lambda_3)$.

To show that $W$ contains  highest weight $\lambda_1+\lambda_2$ we first apply
$F_{1,2}$ to $v$ to get
$$
\Big(2(a_2\wedge a_1)(\theta_2 - \theta/g)
+\frac{1}{g+1}\phi(a_2\wedge a_1\wedge \theta)\Big)
\otimes
\big(\theta_2 \wedge a_1 - \theta\wedge a_1/(g-1)\big) \in W.
$$
This goes to the vector
\begin{align*}
\bigg(
&2\Big(
(\theta_2,\theta_2) -\frac{1}{g}(\theta,\theta_2)
-\frac{1}{g-1}(\theta_2,\theta) + \frac{1}{g(g-1)}(\theta,\theta-\theta_1)
\Big)
\cr
&+ \frac{1}{g+1}\sum_{j>2}(\theta_j - \theta/g,\theta_2)
-\frac{1}{(g+1)(g-1)}\sum_{j>2}(\theta_j-\theta/g,\theta-\theta_1)
\Big)(a_2\wedge a_1)\otimes a_1
\cr
=& \bigg(2\Big(1-\frac{1}{g}-\frac{1}{g-1}+\frac{g-1}{g(g-1)}\Big)
-\frac{g-2}{g(g+1)}-\frac{g-2}{g(g-1)(g+1)}\bigg)(a_2\wedge a_1)\otimes a_1 \cr
=& -\frac{(g-2)(2g+1)}{g^2-1}\, (a_1\wedge a_2)\otimes a_1 \hspace*{202pt}
\end{align*}
in $\Lambda^2 H \otimes H$ under the projection (\ref{eqn:projection}). This is
a non-zero highest weight vector of weight $\lambda_1+\lambda_2$ for all $g\ge
3$.

Since we do not need to know the result for $\lambda_3$, we only sketch the
argument. Since $\theta\wedge a_1 \in \Lambda^3 H$ is a highest weight vector
with highest weight $\lambda_1$, and since $V(2\lambda_2)\otimes V(\lambda_1)$
does not contain $V(\lambda_3)$, it suffices to show that the $\Sp(H)$-module
generated by the image of
$$
w := \overline{(a_2\wedge b_2-\theta/g)^2} \otimes (a_2\wedge b_2\wedge a_1)
\in V(2\lambda_2)\otimes \Lambda^3 H
$$
in $\Lambda^3 H$ under the projection described above contains $V(\lambda_3)$.
To do this, first apply $F_{2,3}$ to $w$ to get
\begin{multline*}
\Big(2 (a_2\wedge a_3)(\theta_2-\theta/g) +
\frac{1}{g+1}\phi(a_2\wedge a_3 \wedge \theta)\Big)\otimes (\theta_2\wedge a_1)
\cr
+ \bigg((\theta_2-\theta/g)^2 + \frac{1}{g+1}\phi(\theta_2\wedge \theta)
- \frac{1}{(g+1)(2g+1)}\phi(\theta^2)\bigg)\otimes (a_1\wedge a_2 \wedge a_3).
\end{multline*}
This is a vector of weight $\lambda_3$ whose image in $\Lambda^3 H$ is the
highest weight vector
\begin{multline*}
\bigg(\frac{2(g-1)}{g} + \frac{g-1}{g(g+1)} + 0 + \frac{2}{g+1}
- \frac{6}{(g+1)(2g+1)}\bigg)\, a_1\wedge a_2 \wedge a_3 \cr
=  (g-1)\frac{4 g^2  + 12 g + 3}{g (g + 1) (2 g + 1)}\,a_1\wedge a_2 \wedge a_3.
\end{multline*}
Since this is non-zero for all $g\ge 3$, it follows that $W$ contains a factor
with highest weight $\lambda_3$.
\end{proof}

\section{Presentations of $\u_3$ and $\t_3$}
\label{sec:pres_3}

Recall that $\t_3$ is isomorphic to the degree completion of the graded Lie
algebra associated to its weight filtration. So a presentation of its associated
graded Lie algebra also gives a presentation of $\t_3$ itself. Similarly, a
presentation of $\Gr^W_\dot\u_3$ gives a presentation of $\u_3$.

\begin{theorem}
A minimal presentation of $\Gr^\dot_\LCS \t_3$ is
$$
\Gr^\dot_\LCS \t_3 = \L(V)/\r,
$$
where $V := V(\lambda_3)$ and $\r$ is the graded ideal generated by the unique
$\Sp_3$-submodule $R$ of $\L_3(V)$ isomorphic to
$$
V(2\lambda_2+\lambda_3)+V(\lambda_1+\lambda_2) + V(\lambda_3). \qed
$$  
\end{theorem}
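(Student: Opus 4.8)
The plan is to identify the space of minimal relations of $\Gr^\dot_\LCS\t_3$ and to locate it precisely inside $\L_3(V)$. Since $\t_3$ is the degree completion of $\Gr^\dot_\LCS\t_3$ (as recalled at the start of this section), it suffices to present the graded Lie algebra; by the minimal-presentation formalism for graded Lie algebras its generators are $H_1 = \Gr^W_{-1}\t_3 = V$ and its relations are $H_2(\Gr^\dot_\LCS\t_3)$, graded by weight. First I would pin down the weight in which these relations live. By Corollary \ref{cor:cup_prod} there are no quadratic relations, so $\Gr^W_{-2}\t_3 = \L_2(V)$; and by Corollary \ref{cor:sakasai}(ii) together with Kabanov purity (Theorem \ref{thm:kabanov}), $\Gr^W_m H^2(\t_3)=0$ for $m\neq 3$. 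Dually, every minimal relation is cubic, so the entire content of the presentation is the single module $R := \Gr^W_{-3}H_2(\t_3)$.

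Next I would compute the ambient module in which $R$ sits. Because $\Gr^W_{-2}\t_3 = \L_2(V)$, the weight-$3$ Chevalley--Eilenberg complex of $\t_3$ agrees in degrees $2$ and $3$ (including the map $J$) with that of the free Lie algebra $\L(V)$; exactness of the free complex then identifies $\coker J$, with $J$ the Jacobi map (\ref{eqn:jacobi}), with $\L_3(V)$ as $\Sp_3$-modules. By Sakasai's computation (Proposition \ref{prop:sakasai}) this common module is
$$
\L_3(V) \cong \coker J \cong
V(2\lambda_1+\lambda_3) + V(\lambda_1+\lambda_2)
+ V(2\lambda_2+\lambda_3) + V(\lambda_3),
$$
with every constituent of multiplicity one. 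The exact sequence of Corollary \ref{cor:sakasai}(i) then exhibits $R$ as the kernel of the bracket $\coker J \to \Gr^W_{-3}\t_3$, so $R$ is a multiplicity-free submodule built from some subset of these four irreducibles; this also places $R$ inside $\L_3(V)$, as required by the statement.

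I would then determine $R$ by a two-sided squeeze that exploits multiplicity-freeness. On one side, the image of $\Gr^W_{-3}\t_3=\Gr^W_{-3}\u_3$ in $\Gr^W_{-3}\OutDer\p$ is $V(2\lambda_1+\lambda_3)$ (Corollary \ref{cor:sakasai}, with the ambient module given by Corollary \ref{cor:outder}); hence $V(2\lambda_1+\lambda_3)$ survives to the quotient $\Gr^W_{-3}\t_3$ and, being multiplicity one, is not a relation. On the other side, the BSCC map $t_A$ and the BP map $t_B$ have disjoint supports, so $[\log t_A,\log t_B]=0$ and the class $\log t_A\otimes\log t_B$ lies in the kernel of the bracket (\ref{eqn:bracket}); its image in $\coker J\cong\L_3(V)$ therefore lands in $R$. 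Proposition \ref{prop:sub-mod} shows that the $\Sp_3$-submodule generated by this image contains $V(2\lambda_2+\lambda_3)+V(\lambda_1+\lambda_2)+V(\lambda_3)$, so all three of these are relations. Since the four constituents are multiplicity free, the two constraints are complementary and force
$$
\Gr^W_{-3}\t_3 = V(2\lambda_1+\lambda_3), \qquad
R = V(2\lambda_2+\lambda_3)+V(\lambda_1+\lambda_2)+V(\lambda_3).
$$
This $R$ is the unique submodule of $\L_3(V)$ of its isomorphism type, and the minimal presentation $\Gr^\dot_\LCS\t_3 = \L(V)/\r$ with $\r=(R)$ follows at once.

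The main obstacle is the input from Proposition \ref{prop:sub-mod}: extracting, from the single commuting pair $(t_A,t_B)$, enough relations to realize all three lower constituents of $\L_3(V)$. This is precisely the explicit calculation of Section \ref{sec:commuting} --- Morita's formula for the second Johnson homomorphism, the projection onto the $2\lambda_2$ isotypical component, and the highest-weight-vector computations carried out through the projection (\ref{eqn:projection}), which certify that the generated submodule meets each of $V(2\lambda_2+\lambda_3)$, $V(\lambda_1+\lambda_2)$ and $V(\lambda_3)$ nontrivially inside $\coker J$ (and not merely inside the larger kernel, where $V(\lambda_3)$ occurs with multiplicity two). Once that module-theoretic input is granted, the remaining work here is only the multiplicity-free bookkeeping that turns ``contains these three'' and ``omits the fourth'' into an exact identification of $R$.
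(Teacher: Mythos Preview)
Your proposal is correct and follows essentially the same route as the paper: both reduce to Proposition~\ref{prop:h2}, whose proof combines Sakasai's computation of $\coker J$, the fact that the image of $\Gr^W_{-3}\t_3$ in $\Gr^W_{-3}\OutDer\p$ is $V(2\lambda_1+\lambda_3)$, and Proposition~\ref{prop:sub-mod} to force the remaining three multiplicity-free constituents into the relation module. Your explicit identification $\coker J\cong\L_3(V)$ (via the absence of quadratic relations) and your remark about the multiplicity-two occurrence of $V(\lambda_3)$ in the larger kernel are helpful clarifications that the paper leaves implicit.
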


The copy of $V(\lambda_3)$ in the cubic relations corresponds to the fact that
the generators $\Gr^W_{-1}\t_3$ commute with the copy of the trivial
representation $\Q\psi \in \Gr^W_{-2}\t_3$. That is, the $\lambda_3$ component
of the cubic relations is $[\psi,\Gr^W_{-1}\t_3]$. Since $\psi = 0$ in $\u_3$,
we obtain the following presentation of $\Gr^W_\dot\u_3$.

\begin{corollary}
A minimal presentation of $\Gr^\dot_\LCS \u_3$ is
$$
\Gr^\dot_\LCS \u_3 = \L(V)/\r,
$$
where $V := V(\lambda_3)$ and $\r$ is the graded ideal generated in degrees $2$
and $3$. The space of quadratic relations is the unique copy of the trivial
representation in $\L_2(V) \cong \Lambda^2 V$. The space of cubic relations is
the unique $\Sp_3$-submodule of $\L_3(V)$ isomorphic to
$V(2\lambda_2+\lambda_3)+V(\lambda_1+\lambda_2)$. \qed
\end{corollary}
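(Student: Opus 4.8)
The plan is to read off the presentation of $\Gr^\dot_\LCS\u_3$ from the presentation of $\Gr^\dot_\LCS\t_3$ supplied by the preceding theorem, using the central extension of Proposition~\ref{prop:hodge_mcg}(i). Both Lie algebras are generated in degree $1$ by $V = V(\lambda_3)$: indeed Proposition~\ref{prop:hodge_mcg}(ii) gives $H_1(\u_3) = H_1(\t_3) = \Gr^W_{-1}\t_3 = V$, so each has a presentation $\L(V)/\r$. Taking associated gradeds of the central extension yields
$$
0 \to \Q(1) \to \Gr^\dot_\LCS\t_3 \to \Gr^\dot_\LCS\u_3 \to 0,
$$
which presents $\Gr^\dot_\LCS\u_3$ as $\Gr^\dot_\LCS\t_3$ modulo the one-dimensional central ideal $\Q\psi$, where $\psi$ spans the trivial summand $V(0) \subset \Gr^W_{-2}\t_3$.

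First I would transport this to the free Lie algebra. Let $\psi_0$ span the unique copy of $V(0)$ in $\L_2(V) = \Lambda^2 V = V(0) \oplus V(2\lambda_2)$, chosen to map to $\psi$. Writing $\r_\t$ and $\r_\u$ for the two relation ideals, the surjection $\t_3 \to \u_3$ with kernel $\Q\psi$ gives $\r_\u = \r_\t + (\psi_0)$; this is a genuine ideal because $\psi$ is central, so $[\L(V), \psi_0]$ already lies in $\r_\t$. In degree $2$ the preceding theorem says $\t_3$ has no relations, so $\r_\t$ contributes nothing there and $\r_\u$ acquires the single quadratic relation $\Q\psi_0 = V(0)$, while $V(2\lambda_2)$ maps isomorphically onto $\Gr^W_{-2}\u_3$, consistent with Proposition~\ref{prop:wt_2}.

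The crux is degree $3$. The cubic relations of $\t_3$ are $R = V(2\lambda_2+\lambda_3) + V(\lambda_1+\lambda_2) + V(\lambda_3)$, and I would identify the last summand with $[\psi_0, V]$, the degree-$3$ part of the ideal $(\psi_0)$. The bracket $V(0) \otimes V \to \L_3(V)$, $\psi_0 \otimes x \mapsto [\psi_0, x]$, is a nonzero $\Sp_3$-map onto the multiplicity-one copy of $V(\lambda_3)$ in $\L_3(V)$; and since $\psi$ is central in $\t_3$ we have $[\psi_0, x] \in \r_\t$, so this copy of $V(\lambda_3)$ is exactly how centrality forces a cubic relation. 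After imposing $\psi_0 = 0$, this copy lies in $(\psi_0)$, equivalently in $[\L(V), (\r_\u)_2]$, hence is no longer a minimal generator. As $(\psi_0) \cap \L_3(V) = [\psi_0, V] \cong V(\lambda_3)$ meets neither $V(2\lambda_2+\lambda_3)$ nor $V(\lambda_1+\lambda_2)$, those two summands persist as the minimal cubic relations of $\u_3$, giving the claimed presentation.

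To certify minimality and rule out hidden relations, I would cross-check against the cohomology already computed: by the Gysin sequence of Corollary~\ref{cor:sakasai}(ii), $H^2(\u_3)$ has weight-$2$ part $\Q(-1) \cong V(0)$ and weight-$3$ part $\ker\{H^2(\t_3) \to V(-1)\} = V(2\lambda_2+\lambda_3) + V(\lambda_1+\lambda_2)$. Dualizing, $H_2(\u_3)$ consists precisely of $V(0)$ in degree $2$ and $V(2\lambda_2+\lambda_3) + V(\lambda_1+\lambda_2)$ in degree $3$, matching the two steps above. The main obstacle I expect is the bookkeeping in degree $3$: one must check that $[\psi_0, V]$ is nonzero and is exactly the $V(\lambda_3)$ summand of $R$, so that it is absorbed by the new quadratic relation, and that adjoining $\psi_0$ produces no spurious degree-$3$ relations beyond this copy of $V(\lambda_3)$, a point the cohomological computation settles outright.
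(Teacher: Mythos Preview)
Your proof is correct and follows essentially the same approach as the paper. The paper's argument is the single paragraph preceding the corollary: the $V(\lambda_3)$ summand of the cubic relations in $\t_3$ is precisely $[\psi,\Gr^W_{-1}\t_3]$, and since $\psi=0$ in $\u_3$ this summand becomes a consequence of the new quadratic relation $V(0)$; Proposition~\ref{prop:h2} then records the resulting $H^2(\u_3)$. Your write-up makes the same identification of $[\psi_0,V]$ with the $V(\lambda_3)$ summand and adds the explicit cohomological cross-check via the Gysin sequence of Corollary~\ref{cor:sakasai}(ii), which is exactly the content of the last line of Proposition~\ref{prop:h2}.
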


Corollary~\ref{cor:sakasai} implies that all relations in $\Gr^W_\dot\t_3$ are
of weight $-3$ --- i.e., are cubic. The theorem is thus an immediate consequence
of the following computation.

\begin{proposition}
\label{prop:h2}
There are $\Sp(H)$-module isomorphisms
$$
\Gr^W_3 H^2(\t_3) \cong V(2\lambda_2+\lambda_3) + V(\lambda_1+\lambda_2)
+ V(\lambda_3)
$$
and $\Gr^W_{-3}\t_3 \cong \Gr^W_{-3}\u_3 \cong V(2\lambda_1+\lambda_3)$.
Consequently,
$$
\Gr^W_3 H^2(\u_3) \cong V(2\lambda_2+\lambda_3) + V(\lambda_1+\lambda_2).
$$
\end{proposition}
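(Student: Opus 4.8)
The plan is to locate both $\Gr^W_{-3}\t_3$ and $\Gr^W_{-3}H_2(\t_3)$ inside the four irreducible summands of $\coker J$ supplied by Sakasai's computation (Proposition~\ref{prop:sakasai}), and then to transport the answer to cohomology by duality. The starting point is the exact sequence of $\Sp(H)$-modules from Corollary~\ref{cor:sakasai},
$$
0 \to \Gr^W_{-3}H_2(\t_3) \to \coker J \to \Gr^W_{-3}\t_3 \to 0,
$$
together with the fact that
$$
\coker J \cong V(2\lambda_1+\lambda_3)+V(\lambda_1+\lambda_2)+V(2\lambda_2+\lambda_3)+V(\lambda_3),
$$
each summand of multiplicity one. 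Since every factor occurs just once, it suffices to decide, one isotypical component at a time, whether it lands in the submodule $\Gr^W_{-3}H_2(\t_3)$ or survives in the quotient $\Gr^W_{-3}\t_3$.

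For the upper bound I would invoke the part of Corollary~\ref{cor:sakasai} asserting that the image of the outer action $\Gr^W_{-3}\t_3 \to \Gr^W_{-3}\OutDer\p$ is the nonzero module $V(2\lambda_1+\lambda_3)$. Hence $V(2\lambda_1+\lambda_3)$ appears in the quotient $\Gr^W_{-3}\t_3$; being of multiplicity one in $\coker J$, it cannot also lie in $\Gr^W_{-3}H_2(\t_3)$, so
$$
\Gr^W_{-3}H_2(\t_3) \subseteq V(\lambda_1+\lambda_2)+V(2\lambda_2+\lambda_3)+V(\lambda_3).
$$

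The matching lower bound is obtained by exhibiting each of the three remaining factors as an honest relation. The factors $V(\lambda_1+\lambda_2)$ and $V(2\lambda_2+\lambda_3)$ come from the commuting pair: by Proposition~\ref{prop:sub-mod} the $\Sp(H)$-submodule of the kernel of the bracket~(\ref{eqn:bracket}) generated by $\log t_A\otimes\log t_B$ contains the copies of these two modules inside $\Gr^W_{-2}\t_3\otimes\Gr^W_{-1}\t_3$. Each of these highest weights occurs with multiplicity one in the tensor product and in $\coker J$ (Lemma~\ref{lem:tensorprod} and Proposition~\ref{prop:sakasai}), so the corresponding copies meet $\im J$ trivially, inject into $\coker J$, and — lying in the kernel of the bracket — land in $\Gr^W_{-3}H_2(\t_3)$. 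The factor $V(\lambda_3)$ is instead the contribution of the center: the trivial representation $\Q\psi \subseteq \Gr^W_{-2}\t_3$ is central, and since $\t_3$ has no quadratic relations the relations $[\psi,\Gr^W_{-1}\t_3]$ cannot be consequences of lower-weight ones. This is precisely the surjectivity of the Gysin map $H^2(\t_3)\to V(-1)$ recorded in Corollary~\ref{cor:sakasai}, which plants a copy of $V(\lambda_3)$ in $\Gr^W_3H^2(\t_3)$, equivalently in $\Gr^W_{-3}H_2(\t_3)$.

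Combining the two bounds yields $\Gr^W_{-3}H_2(\t_3) = V(\lambda_1+\lambda_2)+V(2\lambda_2+\lambda_3)+V(\lambda_3)$ and $\Gr^W_{-3}\t_3 = V(2\lambda_1+\lambda_3)$. Dualizing — each irreducible $\Sp(H)$-module is self-dual and duality reverses the sign of the weight — gives the claimed form of $\Gr^W_3H^2(\t_3)$, while $\Gr^W_{-3}\u_3\cong\Gr^W_{-3}\t_3$ follows from the central extension $0\to\Q(1)\to\t_3\to\u_3\to 0$ of Proposition~\ref{prop:hodge_mcg}, whose kernel sits in weight $-2$. The statement for $\u_3$ then drops out of the Gysin sequence (Corollary~\ref{cor:sakasai}): applying the exact functor $\Gr^W_3$ to $0\to\Q(-1)\to H^2(\u_3)\to H^2(\t_3)\to V(-1)\to 0$ and using that $\Q(-1)$ has weight $2$ identifies $\Gr^W_3H^2(\u_3)$ with the kernel of the surjection $\Gr^W_3H^2(\t_3)\to V(\lambda_3)$, namely $V(2\lambda_2+\lambda_3)+V(\lambda_1+\lambda_2)$. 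I expect the one genuinely delicate point to be the lower bound for $V(\lambda_3)$: unlike the other two factors it occurs with multiplicity two in $\Gr^W_{-2}\t_3\otimes\Gr^W_{-1}\t_3$ (one copy from $V(2\lambda_2)\otimes V(\lambda_3)$ and one from $\Q\psi\otimes\Gr^W_{-1}\t_3$), so a multiplicity count alone cannot separate the copy surviving in $\coker J$ from the one killed by $\im J$; routing $V(\lambda_3)$ through the central-extension argument of Corollary~\ref{cor:sakasai}, rather than through Proposition~\ref{prop:sub-mod}, is what circumvents this.
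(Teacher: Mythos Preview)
Your proof is correct and follows the paper's approach: the same exact sequence from Corollary~\ref{cor:sakasai}, the same identification of $V(2\lambda_1+\lambda_3)$ in $\Gr^W_{-3}\t_3$ via the outer action, and the same use of the commuting pair from Proposition~\ref{prop:sub-mod} to produce cubic relations. The one point of divergence is your handling of the $V(\lambda_3)$ factor: the paper's proof invokes Proposition~\ref{prop:sub-mod} for all three summands, whereas you route $V(\lambda_3)$ through the centrality/Gysin argument of Corollary~\ref{cor:sakasai}(ii) --- a cleaner choice, since (as you observe) $V(\lambda_3)$ occurs with multiplicity two in $\Gr^W_{-2}\t_3\otimes\Gr^W_{-1}\t_3$, so a bare multiplicity count does not immediately show that the copy produced by the commuting pair survives in $\coker J$.
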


\begin{proof}
We use the notation of Proposition~\ref{prop:sakasai}. The kernel of $\coker J
\to \Gr^W_{-3}\t_3$ is $\Gr^W_{-3}H_2(\t_3)$. It contains the $\Sp(H)$-submodule
generated by $(\log t_A)\otimes (\log t_B)$. We know that the image of $\coker
J$ in $\Gr^W_{-3}\t_3$ contains $V(2\lambda_1+\lambda_3)$.
Proposition~\ref{prop:sub-mod} implies that $\Gr^W_{-3}H_2(\t_3)$ is generated
as an $\Sp(H)$-module by the image of $\log t_A\otimes \log t_B$ and that
$\Gr^W_{-3}\t_3$ is irreducible with highest weight $2\lambda_1+\lambda_3$.

The corresponding statements for $\u_3$ are immediate consequences.
\end{proof}

\subsection{Massey products}

The computations above can be stated in terms of Massey triple products.
Suppose that $X$ is a space (such as the classifying space of $T_3$). Fix  a
coefficient ring $R$ for cohomology. The {\em Massey triple product} is the map
\begin{multline*}
\ker\big\{H^p(X)\otimes H^q(X)\otimes H^r(X) \to
\big(H^{p+q}(X)\otimes H^r(X)\big)\oplus\big(H^p(X)\otimes H^{q+r}(X)\big)\big\}
\cr
\to
H^{p+q+r-1}(X)/\big(H^{p+q-1}(X)\wedge H^r(X) + H^p(X) \wedge H^{q+r-1}(X)\big)
\end{multline*}
defined by
$[w_1]\otimes [w_2]\otimes [w_3] \mapsto [w_{12}\wedge w_3 + w_1\wedge w_{23}]$
where $w_1,w_2,w_3$ are cocycles on $X$ of degrees $a,b,c$, respectively and
$w_{12}$ and $w_{23}$ are cochains satisfying $d w_{12} = w_1\wedge w_2$ and
$d w_{23} = \pm w_2\wedge w_3$.

In particular, if $p=q=r=1$ and the cup product $H^1(X)^{\otimes 2} \to H^2(X)$
vanishes, one has a well defined Massey triple product
$$
H^1(X)^{\otimes 3} \to H^2(X).
$$
When $X$ is the classifying space of $T_3$, the cup product
$H^1(T_3;\Q)^{\otimes 2} \to H^2(T_3;\Q)$ vanishes by
Corollary~\ref{cor:cup_prod}. The Massey triple product map is defined and
$\Sp_3(\Z)$-equivariant.

\begin{proposition}
The image of the Massey triple product map
$$
H^1(T_3;\Q)^{\otimes 3} \to H^2(T_3;\Q)
$$
is an $\Sp_3(\Z)$ module isomorphic to the restriction of
$V(2\lambda_2+\lambda_3) + V(\lambda_1+\lambda_2) + V(\lambda_3)$ to
$\Sp_3(\Z)$. Consequently, $\dim H_2(T_3,\Q) \ge 694$.
\end{proposition}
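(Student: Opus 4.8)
The plan is to identify the image of the Massey triple product map with the weight-$3$ part $\Gr^W_3 H^2(\t_3)$ of the Lie algebra cohomology, sitting inside $H^2(T_3;\Q)$, and then to read off the module structure from Proposition~\ref{prop:h2} and compute dimensions. By Corollary~\ref{cor:cup_prod} the cup product on $H^1(T_3;\Q)$ vanishes, so the triple product is a genuine $\Sp_3(\Z)$-equivariant map $H^1(T_3;\Q)^{\otimes 3}\to H^2(T_3;\Q)$ with no indeterminacy, and it remains only to determine its image.

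First I would pass from the group $T_3$ to its unipotent completion $\T_3$. The comparison map (\ref{eqn:homom}) gives $H^\dot(\t_3)\to H^\dot(T_3;\Q)$, an isomorphism in degree $1$ and an injection in degree $2$. Massey products are natural, and both indeterminacies vanish, so the triple product on $H^1(T_3;\Q)$ is, via the degree-$1$ isomorphism, the image under the comparison map of the triple product computed on $H^1(\t_3)$; as the degree-$2$ map is injective and $\Sp_3(\Z)$-equivariant, the image of the Massey map on $T_3$ is carried isomorphically from its Lie-algebra counterpart. Since the triple product is a morphism of mixed Hodge structures (by the Hodge theory of Section~\ref{sec:hodge}) and $H^1(\t_3)$ is pure of weight $1$ (Proposition~\ref{prop:hodge_mcg}), this image lies in $\Gr^W_3 H^2(\t_3)$.

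The technical heart is a weight-$3$ duality between the Massey product and the cubic relations, extending the weight-$2$ duality recalled in the discussion of the Chevalley--Eilenberg chains, where the quadratic part of the relation map $\psi$ is dual to the cup product. In the minimal presentation $\Gr^\dot_\LCS\t_3=\L(V)/\r$ with $V=V(\lambda_3)$, Corollary~\ref{cor:cup_prod} shows there are no quadratic relations, so the minimal relations begin in degree $3$ with the module $R:=\Gr^W_{-3}H_2(\t_3)$, and $\Gr^W_3 H^2(\t_3)\cong R^\ast$. The assertion I would establish, by a cochain-level computation in the bar construction (equivalently, the $1$-minimal model), is that the triple Massey product map $H^1(\t_3)^{\otimes 3}\to \Gr^W_3 H^2(\t_3)$ is, up to a nonzero scalar, the transpose of the inclusion $R\hookrightarrow \L_3(V)\hookrightarrow V^{\otimes 3}=H_1(\t_3)^{\otimes 3}$. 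This is the step I expect to be the main obstacle: it is precisely the statement that, in the absence of quadratic relations, the primary triple products detect all of $\Gr^W_3 H^2$. Granting it, the map is surjective onto $\Gr^W_3 H^2(\t_3)$, so its image is $R^\ast\cong R$, using that symplectic modules are self-dual.

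Finally I would assemble the pieces. By Proposition~\ref{prop:h2}, $R\cong V(2\lambda_2+\lambda_3)+V(\lambda_1+\lambda_2)+V(\lambda_3)$, so the image of the Massey triple product is isomorphic to the restriction of this $\Sp_3$-module to $\Sp_3(\Z)$. The Weyl dimension formula for $\Sp_3$ gives $\dim V(2\lambda_2+\lambda_3)=616$, $\dim V(\lambda_1+\lambda_2)=64$ and $\dim V(\lambda_3)=14$, whose sum is $694$. Since this image is a subspace of $H^2(T_3;\Q)$ and $\dim H^2(T_3;\Q)=\dim H_2(T_3;\Q)$, we conclude $\dim H_2(T_3;\Q)\ge 694$.
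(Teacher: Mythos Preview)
Your proposal is correct and follows essentially the same route as the paper: pass to $\t_3$ via the comparison map $H^\dot(\t_3)\to H^\dot(T_3;\Q)$, compute the Massey triple product there, and transfer back using injectivity in degree~$2$. The paper's proof is terser---it dispatches your ``technical heart'' in one phrase (``examining the weight $3$ part of the Chevalley--Eilenberg cochains on $\t_3$'') and additionally invokes the purity of $H^2(\t_3)$ in weight~$3$ (Corollary~\ref{cor:sakasai}(ii)) to conclude surjectivity onto all of $H^2(\t_3)$---but the substance is the same, and your more explicit description of the duality between cubic relations and triple products is a fair unpacking of that phrase.
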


\begin{proof}
The corresponding statement for $\t_3$ follows by examining the weight 3 part of
the Chevalley-Eilenberg cochains on $\t_3$. It and the fact that $H^2(\t_3)$ is
pure of weight 3 imply that the Massey triple product map $H^1(\t_3)^{\otimes 3}
\to H^3(\t_3)$ is surjective. The assertion for $T_3$ follows as the map
$H^\dot(\t_3) \to H^\dot(T_3)$ is $\Sp_3$-equivariant, preserves Massey products
and is an isomorphism in degree 1 and injective in degree 2.
\end{proof}

\section{Genus $3$ Mapping Class Groups are not K\"ahler Groups}
\label{sec:kahler}

In this section we derive some general consequences of a mapping class group of
genus $g\ge 3$ being a K\"ahler group. In particular, we complete the proof
of Theorem~\ref{thm:main_mcg}.

To prove Theorem~\ref{thm:main_mcg} we need to know that if the genus 3 mapping
class group $\G$ is the fundamental group of a compact K\"ahler manifold $X$,
then the standard representation $\rho: \G \to \Sp(H)$ is the monodromy
representation of a PVHS. This will imply that the completion of $\G$ relative
to $\rho$ has a MHS and, by Theorem~\ref{thm:u_quad}, that the Lie algebra of
its prounipotent radical is quadratically presented, a contradiction. To prove
that $\rho$ is the monodromy representation of a PVHS, we appeal to Simpson's
fundamental work \cite{simpson}.

\subsection{Rigid representations of K\"ahler groups}

Recall that a representation $\rho : \G \to R$ of a finitely generated group
into an algebraic group $G$ over $\R$ is {\em rigid} if every representation
$\phi$ that is sufficiently close to $\rho$ in $\Hom(\G,R)$ is conjugate to
$\rho$. It is {\em properly rigid} if it is rigid as a homomorphism to the
Zariski closure of its image.

A well-known result of Weil \cite{weil} implies that $\rho$ is rigid if
$H^1(\G,\Ad(\rho)) = 0$, where $\Ad(\rho)$ is the representation
$$
\xymatrix{\G \ar[r]^\rho & G \ar[r]^(.4){\Ad} & \Aut(\g)}
$$
of $\G$ on the Lie algebra of $G$.

The following result is a special case of a result \cite[Thm.~5]{simpson} of
Simpson.

\begin{theorem}[Simpson]
\label{thm:simpson}
Suppose that $X$ is a connected compact K\"ahler manifold and that $\rho :
\pi_1(X,x) \to \GL(V)$ is an irreducible representation on a rational vector
space. If $H^1(X,\Hom(V,V))=0$, then 
\begin{enumerate}

\item $\rho$ is rigid and properly rigid,

\item the Zariski closure $R$ of $\rho$ is a reductive group of Hodge type,

\item $\rho : \pi_1(X,x) \to R$ is the monodromy representation of a  polarized
$\Q$-VHS over $X$ with fiber $V\oplus V^\ast$ over the base point $x$. 

\end{enumerate}
\end{theorem}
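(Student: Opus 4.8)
The plan is to deduce all three assertions from Weil's rigidity criterion (recalled above) together with Simpson's structure theorem for rigid representations of K\"ahler groups, so the only real work is bookkeeping around the cohomological hypothesis and a rational descent at the end. First I would convert the hypothesis $H^1(X,\Hom(V,V))=0$ into a statement about $\pi_1$. The classifying map $X\to B\pi_1(X,x)$ induces an isomorphism on $H^1$ with any local coefficient system, so
$$
H^1(X,\Hom(V,V)) \cong H^1(\pi_1(X,x),\End V).
$$
Since for $\GL(V)$ the adjoint module is exactly $\mathfrak{gl}(V)=\End V$ with the conjugation action, the local system $\Hom(V,V)$ is precisely $\Ad\rho$. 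Thus the hypothesis reads $H^1(\pi_1(X,x),\Ad\rho)=0$, and Weil's criterion immediately gives that $\rho$ is rigid, which is the bulk of (i).

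Next I would handle reductivity and proper rigidity together. As $\rho$ is irreducible, $V$ is a simple module over the Zariski closure $R$, so $R$ acts completely reducibly and is therefore reductive; this is the first clause of (ii). For proper rigidity I need $H^1(\pi_1(X,x),\mathfrak{r})=0$, where $\mathfrak{r}=\mathrm{Lie}(R)$. Because $R$ is reductive, $\mathfrak{r}$ is an $R$-module direct summand of $\mathfrak{gl}(V)$, hence a direct summand as a $\pi_1(X,x)$-module via $\rho$; consequently $H^1(\pi_1(X,x),\mathfrak{r})$ is a direct summand of the vanishing group $H^1(\pi_1(X,x),\mathfrak{gl}(V))$ and so vanishes. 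Applying Weil's criterion to $\rho:\pi_1(X,x)\to R$ then yields proper rigidity, completing (i).

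With rigidity established, the substance of (ii) and (iii) is Simpson's theorem that a reductive, properly rigid representation of the fundamental group of a compact K\"ahler manifold is of Hodge type: it underlies a polarized complex VHS and $R$ is a group of Hodge type. I would quote \cite[Thm.~5]{simpson} for exactly this conclusion, which supplies the ``Hodge type'' clause of (ii) and the complex VHS underlying (iii).

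The step I expect to be the main obstacle is descending from the complex VHS that Simpson's nonabelian Hodge theory produces to a polarized \emph{rational} VHS, and identifying its fiber as $V\oplus V^\ast$. The Hodge decomposition Simpson constructs is a priori only complex, so it need not be compatible with the given $\Q$-structure on $\rho$. The remedy is to pass to $V\oplus V^\ast$: equipping $V^\ast$ with the conjugate Hodge filtration, so that complex conjugation interchanges the two summands, produces a VHS on $V\oplus V^\ast$ that is stable under conjugation and hence defined over $\R$; and since $\rho$, and therefore its dual, is already defined over $\Q$, the polarization and Hodge filtration descend to a polarized $\Q$-VHS with fiber $V\oplus V^\ast$ over $x$. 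This yields (iii), and checking that the resulting group of Hodge type is the same $R$ completes (ii).
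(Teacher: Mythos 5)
You should be aware that the paper does not prove this theorem at all: it is stated as a special case of Simpson's result \cite[Thm.~5]{simpson}, and that cited theorem already contains the rational conclusion --- which is precisely why the fiber is $V\oplus V^\ast$ rather than $V$. Your preliminary steps are correct and match the paper's framing: the identification $H^1(X,\Hom(V,V))\cong H^1(\pi_1(X,x),\Ad\rho)$, Weil's criterion (which the paper recalls immediately before the theorem), reductivity of the Zariski closure from irreducibility, and proper rigidity via the summand argument $\mathfrak{r}\subset\mathfrak{gl}(V)$ are all fine. But by invoking Simpson only for the existence of a polarized \emph{complex} VHS and the Hodge-type statement, and undertaking the rational descent yourself, you have assumed responsibility for exactly the part of Simpson's Theorem~5 that goes beyond the complex theory, and that step, as you have written it, has a genuine gap.

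The gap is the sentence claiming that equipping $V^\ast$ with the conjugate Hodge filtration makes ``complex conjugation interchange the two summands'' and that the structure then ``descends to a polarized $\Q$-VHS'' because $\rho$ is defined over $\Q$. To produce a $\Q$-VHS on the $\Q$-local system $V_\Q\oplus V_\Q^\ast$, the only relevant conjugation is the one attached to this rational structure, and it \emph{preserves} each summand $V_\C$ and $V_\C^\ast$ (each is defined over $\Q$); it does not interchange them. The involution that does swap the summands lives on $V\oplus\overline{V}$ (the conjugate local system) and is carried over to $V\oplus V^\ast$ only via the flat Hermitian polarization $h$ of the complex VHS, which identifies $\overline{V}\cong V^\ast$ and which Schur's lemma determines only up to a real scalar. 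What must actually be proved is that, after a suitable normalization of $h$, this transported real structure agrees with the real form of $V_\Q\oplus V_\Q^\ast$ --- so that conjugation with respect to the rational structure sends $H^{p,q}$ to $H^{q,p}$ --- and that the polarization can be taken to be the canonical rational pairing between $V$ and $V^\ast$, for which the Hodge--Riemann relations must be verified. None of this follows formally from ``$\rho$ is defined over $\Q$''; note also that a Hodge filtration never ``descends to $\Q$'' --- rationality of a Hodge structure is a compatibility between the filtration and the rational conjugation, not a descent of the filtration. This compatibility is exactly the content of Simpson's Theorem~5 beyond the complex case, so the honest repair is either to cite that theorem for the full rational statement, as the paper does, or to supply the normalization and rationality argument explicitly.
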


Combining this with the main results of \cite{hain:malcev}, we obtain:

\begin{corollary}
\label{cor:kahler}
With the same assumptions as in the preceding theorem, the completion of
$\pi_1(X,x)$ relative to $\rho$ has a natural $\Q$-MHS and the action of
$\pi_1(X,x)$ on $H_1(\u_X)$ is the monodromy representation of a polarized
$\Q$-VHS. \qed
\end{corollary}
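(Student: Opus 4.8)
The plan is to chain together the two quoted inputs: Theorem~\ref{thm:simpson} manufactures a polarized variation of Hodge structure, and this is exactly the datum needed to run the Hodge-theoretic machinery of Section~\ref{sec:hodge}, which then equips the relative completion with a mixed Hodge structure. First I would read off from parts (ii) and (iii) of Theorem~\ref{thm:simpson} that the Zariski closure $R$ of $\rho$ is a reductive $\Q$-group of Hodge type and that $\rho : \pi_1(X,x) \to R$ is the monodromy representation of a polarized $\Q$-VHS $\V$ over $X$ with fiber $V \oplus V^\ast$ at $x$. Since $X$ is compact K\"ahler, we are now precisely in the situation of Section~\ref{sec:hodge}, with $F = \Q$ and the normal crossings divisor empty.

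The first assertion is then immediate: applying the theorem of \cite{hain:malcev} quoted in Section~\ref{sec:hodge} to $\V$, the coordinate ring $\cO(\cG)$ of the completion $\cG$ of $\pi_1(X,x)$ relative to $\rho$ is a Hopf algebra in the category of Ind-MHS over $\Q$, with $W_{-1}\cO(\cG)=0$ and $W_0\cO(\cG)=\cO(R)$. Equivalently, by the companion Corollary, the Lie algebra $\g$ carries a natural pro-MHS with $\u = W_{-1}\g$ and $\Gr^W_0\g \cong \r$. This is the claimed $\Q$-MHS on the relative completion.

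For the second assertion I would first observe that the prounipotent radical acts trivially on the abelianization $H_1(\u_X)$, so the conjugation action of $\cG$, and hence of $\pi_1(X,x)$, on $H_1(\u_X)$ factors through the reductive quotient $R$; the monodromy on $H_1(\u_X)$ is thus $\rho$ followed by an algebraic representation of $R$. By Theorem~\ref{thm:u_quad}(1), $H^1(\u_X)$ is pure of weight $1$, so $H_1(\u_X)$ is a pure $\Q$-Hodge structure of weight $-1$. Using the cohomological description of $H_1(\u)$ as an $R$-module recalled earlier, it decomposes into irreducible constituents $V_\lambda$ with multiplicity spaces $[H^1(\pi_1(X),V_\lambda)]^\ast$; each $V_\lambda$ underlies a polarized $\Q$-VHS over $X$ because $R$ is of Hodge type and $\rho$ is the monodromy of $\V$, while each multiplicity space is a pure $\Q$-Hodge structure by Deligne's theorem that $H^1(X,\V_\lambda)$ is pure on the compact base $X$. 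Assembling these constituents realizes the $\pi_1(X,x)$-action on $H_1(\u_X)$ as the monodromy of a polarized $\Q$-VHS.

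The step I expect to require the most care is the last one: passing from the Hodge structure produced by \cite{hain:malcev} at the single basepoint $x$ to a genuine polarized \emph{variation} over $X$ whose monodromy is the stated $\pi_1(X,x)$-action. One must check that the fiberwise Hodge datum on $H_1(\u_X)$ agrees with the Hodge structure inherited from $\V$ through the Hodge-type structure on $R$, and that the polarization on $\V$ induces a compatible one. This is where the full variational strength of the theory is used --- the identification $W_{-m}\u = L^m\u$ and the purity of $H_1(\u)$ (cf.\ Proposition~\ref{prop:hodge_mcg} in the mapping class group case) --- rather than merely the existence of a mixed Hodge structure at one point.
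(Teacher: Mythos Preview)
Your argument is correct and is exactly the approach the paper takes: the corollary carries a \qed, and the sentence introducing it simply says to combine Simpson's theorem with the main results of \cite{hain:malcev}. Your final worry is less serious than you fear --- once the $\pi_1(X,x)$-action on $H_1(\u_X)$ factors through the reductive quotient $R$, the PVHS structure is obtained tannakianly from $\V$ (every rational representation of $R$ is a subquotient of a tensor construction on the fiber of $\V$, and PVHS are closed under these operations), so no separate fiberwise comparison of Hodge data is required.
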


\subsection{Rigidity for mapping class groups}

For the rest of this section, we suppose that $\G$ denotes $\G_{g,n}$ or
$\Ghat_{g,n}$ and that $\G'$ is a subgroup of $\G$ that contains its Johnson
subgroup. Corollary~\ref{cor:isom_comp} implies that the completion of $\G'$
relative to $\rho$ is isomorphic to the completion of $\G$ relative to $\rho$.
Denote it by $\cG$.

\begin{lemma}
If $g\ge 3$, then $\rho : \G' \to \GL(H)$ is rigid and properly rigid.
\end{lemma}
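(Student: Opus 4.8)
The plan is to apply the criterion of Weil recorded above: it suffices to verify that $H^1(\G',\End(H))$ vanishes in order to get rigidity in $\GL(H)$, and that $H^1(\G',\sp(H))$ vanishes in order to get rigidity as a homomorphism into the Zariski closure of the image. First I would pin down that closure. Since $\G'$ has finite index in $\G$, its image in $\Sp_g(\Z)$ has finite index, hence is Zariski dense in $\Sp(H)=\Sp_g$ (this is the observation already used in the proof of Proposition~\ref{prop:putman_mcg}). Thus the closure is $\Sp(H)$, whose Lie algebra is $\sp(H)$, and proper rigidity amounts to the vanishing of $H^1(\G',\sp(H))$.

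Next I would decompose the two adjoint modules as $\Sp(H)$-modules. Because $H$ is self-dual via the symplectic pairing, $\End(H)\cong H\otimes H\cong S^2 H\oplus \Lambda^2 H$. Here $S^2 H\cong \sp(H)\cong V(2\lambda_1)$ is the adjoint representation, while $\Lambda^2 H\cong \Lambda^2_0 H\oplus\Q\theta\cong V(\lambda_2)\oplus V(0)$. Hence
$$
\End(H)\cong V(2\lambda_1)\oplus V(\lambda_2)\oplus V(0),\qquad \sp(H)\cong V(2\lambda_1),
$$
and everything reduces to showing that $H^1(\G',V(\lambda))=0$ for each $\lambda\in\{2\lambda_1,\lambda_2,0\}$.

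This is exactly where Proposition~\ref{prop:putman_mcg} does the work. Since $\G'$ is a finite index subgroup containing the Johnson subgroup, that proposition gives $H^1(\G',V(\lambda))\cong\Q$ for $\lambda=\lambda_3$, $\cong\Q^n$ for $\lambda=\lambda_1$, and $0$ otherwise. For $g\ge 3$ the fundamental weights $\lambda_1,\lambda_2,\lambda_3$ are pairwise distinct, and none of $2\lambda_1,\lambda_2,0$ equals $\lambda_1$ or $\lambda_3$; therefore $H^1(\G',V(2\lambda_1))=H^1(\G',V(\lambda_2))=H^1(\G',V(0))=0$. Consequently $H^1(\G',\End(H))=0$ and $H^1(\G',\sp(H))=0$, and Weil's criterion yields both rigidity and proper rigidity.

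I do not expect a genuine obstacle here: once $H^1(\G',V(\lambda))$ has been computed in Proposition~\ref{prop:putman_mcg}, rigidity is a formal consequence. The only step requiring care is the representation-theoretic bookkeeping --- decomposing $\End(H)$ correctly and confirming that the adjoint module $\sp(H)=V(2\lambda_1)$ contains no copy of $V(\lambda_1)$ or $V(\lambda_3)$ --- together with the observation that it is precisely the hypothesis $g\ge 3$ that keeps $\lambda_1,\lambda_2,\lambda_3$ distinct and keeps the ``bad'' weights out of the adjoint modules.
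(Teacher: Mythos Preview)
Your argument is correct and follows essentially the same route as the paper: apply Weil's criterion after decomposing $\End(H)\cong H\otimes H\cong S^2H\oplus\Lambda^2_0 H\oplus\Q$ and invoke Proposition~\ref{prop:putman_mcg} to kill $H^1$ with coefficients in each summand. You are simply more explicit than the paper in identifying the Zariski closure, labeling the irreducible pieces by highest weight, and noting where the hypothesis $g\ge 3$ enters.
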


\begin{proof}
This follows from \cite{weil}. Since $\Ad(\rho) \cong H^{\otimes 2} \cong S^2 H
\oplus \Lambda^2_0 H \oplus \Q$, Proposition~\ref{prop:putman_mcg} implies that
both $H^1(\G',\Ad(\sp_g))$ and $H^1(\G',\Ad(\End(H)))$ vanish.
\end{proof}

Combining Corollary~\ref{cor:kahler} with Theorem~\ref{thm:u_quad} we obtain:

\begin{proposition}
If $\G'$ is the fundamental group $\pi_1(X,x)$ of a compact K\"ahler manifold
$X$, then the Lie algebra $\g_{X,x}$ of the completion $\cG_{X,x}$ of
$\pi_1(X,x)$ relative to $\rho$ has a natural MHS. The weight filtration of the
Lie algebra $\u_{X,x}$ of its prounipotent radical is its lower central series:
$$
W_{-m}\u_{X,x} = L^m \u_{X,x}.
$$
\end{proposition}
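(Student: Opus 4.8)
The plan is to deduce both assertions by feeding the standard representation $\rho:\G'\to\GL(H)$ into Simpson's rigidity machinery and then into the Hodge theory of relative completion from \cite{hain:malcev}. Concretely, the claim that $\g_{X,x}$ carries a natural MHS will come from Corollary~\ref{cor:kahler}, and the claim $W_{-m}\u_{X,x}=L^m\u_{X,x}$ will come from Theorem~\ref{thm:u_quad}(ii), once I have verified that $\rho$ satisfies the hypotheses of Simpson's Theorem~\ref{thm:simpson}, namely that it is irreducible and that $H^1(X,\Hom(H,H))=0$.

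First I would check those two hypotheses. The module $H=H_1(S;\Q)$ is irreducible as a $\G'$-module: since $\G'$ has finite index in $\G$, its image in $\Sp_g(\Z)$ has finite index and is therefore Zariski dense in $\Sp(H)$, and $H$ is the irreducible standard $\Sp(H)$-module, so it admits no proper $\G'$-invariant subspace. For the cohomological hypothesis I would use that $H^1(X,\Hom(H,H))=H^1(\G',\End(H))$, since degree-one cohomology of any space with local coefficients agrees with the group cohomology of its fundamental group. This group vanishes exactly as in the preceding lemma: as $\Sp(H)$-modules one has $\End(H)\cong S^2H\oplus\Lambda^2_0H\oplus\Q\cong V(2\lambda_1)\oplus V(\lambda_2)\oplus V(0)$, and Proposition~\ref{prop:putman_mcg} shows $H^1(\G',V(\lambda))\neq 0$ only for $\lambda\in\{\lambda_1,\lambda_3\}$, none of which occurs. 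Hence $H^1(X,\Hom(H,H))=0$.

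With both hypotheses in hand, Corollary~\ref{cor:kahler} applies to $\rho$ and yields the first assertion directly: the relative completion $\cG_{X,x}$ carries a natural $\Q$-MHS, so $\g_{X,x}$ is a Lie algebra in the category of pro-MHS. The same corollary tells us that $\rho$ is the monodromy representation of a polarized $\Q$-VHS over $X$ (with fiber $H\oplus H^\ast$; since $H\cong H^\ast$ as $\Sp(H)$-modules, the Zariski closure of the monodromy is $\Sp(H)$, so the associated relative completion is precisely $\cG_{X,x}$). This places us in the setting of Section~\ref{sec:hodge}, whence Theorem~\ref{thm:u_quad}(ii) applies to $\u_{X,x}$ and gives $W_{-m}\u_{X,x}=L^m\u_{X,x}$, the second assertion.

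I do not expect a genuine obstacle: the only substantive input is the vanishing $H^1(X,\End H)=0$, and that computation was already carried out in establishing the preceding lemma via Proposition~\ref{prop:putman_mcg}. The one point that requires care is bookkeeping rather than difficulty, namely confirming that the PVHS produced by Simpson, whose fiber is $H\oplus H^\ast$, has monodromy with Zariski closure $\Sp(H)$, so that the PVHS realizes exactly the completion $\cG_{X,x}$ of the statement; this is immediate from the self-duality $H\cong H^\ast$ of the standard symplectic representation.
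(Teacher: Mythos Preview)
Your proposal is correct and follows exactly the route the paper takes: the paper simply writes ``Combining Corollary~\ref{cor:kahler} with Theorem~\ref{thm:u_quad} we obtain'' and states the proposition, so the argument is precisely the one you spell out---verify the hypotheses of Simpson's theorem (irreducibility of $H$ and the vanishing $H^1(\G',\End H)=0$, the latter already done in the preceding lemma via Proposition~\ref{prop:putman_mcg}), apply Corollary~\ref{cor:kahler} to get the MHS, and then invoke Theorem~\ref{thm:u_quad}(ii) for the identification of the weight filtration with the lower central series. Your bookkeeping remark about the fiber $H\oplus H^\ast$ and self-duality is a reasonable gloss that the paper leaves implicit.
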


The fact that $\u_3$ is not quadratically presented completes the proof of
Theorem~\ref{thm:main_mcg} via Corollary~\ref{cor:quad_pres}.

\begin{corollary}
If $g=3$, then $\G'$ cannot be the fundamental group of a compact K\"ahler
manifold.
\end{corollary}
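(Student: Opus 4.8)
The plan is to argue by contradiction, assembling the Hodge-theoretic machinery of this section to force $\u_{X,x}$ to be quadratically presented and then to contradict Proposition~\ref{prop:h2}. So suppose $\G'=\pi_1(X,x)$ for a connected compact K\"ahler manifold $X$. First I would invoke the preceding Lemma and Proposition for the Hodge-theoretic input: since the image $\rho(\G')$ has finite index in $\Sp_3(\Z)$ it is Zariski dense in $\Sp_3$, so $H$ is an irreducible $\G'$-module, and $H^1(\G',\End(H))=0$. Hence Simpson's Theorem~\ref{thm:simpson} together with Corollary~\ref{cor:kahler} put a natural $\Q$-MHS on the relative completion $\cG_{X,x}$, and the weight filtration of the Lie algebra $\u_{X,x}$ of its prounipotent radical is its lower central series. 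Theorem~\ref{thm:u_quad}(iii) then forces $\u_{X,x}$ to be quadratically presented, so by Proposition~\ref{prop:quad_equiv}, Lemma~\ref{lem:quad_pres} and the exactness of $\Gr^W_\dot$, the cohomology $H^2(\u_{X,x})$ is pure of weight $2$.

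Next I would identify this completion with the intrinsic one. By Corollary~\ref{cor:isom_comp}, the inclusion $\G'\hookrightarrow\G$ induces an isomorphism of completions relative to $\rho$, so $\cG_{X,x}\cong\cG$ and hence $\u_{X,x}\cong\u_{3,n}$ (resp.\ $\uhat_{3,n}$) according as $\G=\G_{3,n}$ or $\Ghat_{3,n}$. Thus the previous paragraph forces $H^2(\u_{3,n})$ (resp.\ $H^2(\uhat_{3,n})$) to be pure of weight $2$.

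Finally I would derive the contradiction from the base case $n=0$. Proposition~\ref{prop:h2} gives $\Gr^W_3 H^2(\u_3)\cong V(2\lambda_2+\lambda_3)+V(\lambda_1+\lambda_2)\neq 0$, so $H^2(\u_3)$ is not pure of weight $2$ and, by Lemma~\ref{lem:quad_pres}, $\u_3$ is not quadratically presented. For $n=0$ this is immediate, since then $\u_{X,x}\cong\u_3$ directly and we are done. To propagate the impurity to $n>0$ I would reuse the spectral-sequence reduction from the $g\ge 4$ subsection: applied to the extension $0\to\p^n\to\uhat_{3,n}\to\u_3\to 0$ (and its analogue for $\u_{3,n}$), together with the purity of $H^1$ supplied by Proposition~\ref{prop:putman_mcg}, it shows that $H^2$ of the $n$-pointed completion is pure of weight $2$ if and only if $H^2(\u_3)$ is. Since the latter fails, $H^2(\u_{3,n})$ and $H^2(\uhat_{3,n})$ are not pure of weight $2$, contradicting the conclusion of the second paragraph. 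Hence no such $X$ exists.

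I expect the main obstacle to be precisely this $n>0$ reduction: one must check that the ``$H^2$ pure $\iff$ pure in the base case'' spectral-sequence argument, stated for $g\ge 4$, in fact uses nothing about the genus and applies verbatim at $g=3$ for both the $\G_{3,n}$ and $\Ghat_{3,n}$ variants. A cleaner-looking alternative that matches the author's hint is to apply Corollary~\ref{cor:quad_pres} to the natural surjection $\Gr^W_\dot\u_{3,n}\to\Gr^W_\dot\u_3$: were the source quadratically presented and the induced map $H_2(\Gr^W_\dot\u_{3,n})\to H_2(\Gr^W_\dot\u_3)$ surjective, then $\Gr^W_\dot\u_3$ would be quadratically presented, again contradicting Proposition~\ref{prop:h2}. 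This route, however, still requires the $H_2$-surjectivity, which is exactly the content of the spectral-sequence computation, so the obstacle is the same either way.
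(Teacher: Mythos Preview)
Your proposal is correct and follows essentially the paper's approach: assume $\G'$ is K\"ahler, use rigidity plus Simpson plus Theorem~\ref{thm:u_quad} to force $\u_{X,x}\cong\u_{3,n}$ (or $\uhat_{3,n}$) to be quadratically presented, and contradict Proposition~\ref{prop:h2}. The paper's one-line proof simply invokes Corollary~\ref{cor:quad_pres} applied to the surjection onto $\Gr^W_\dot\u_3$; you unpack this more carefully and correctly flag the one point the paper leaves implicit, namely the $H_2$-surjectivity (equivalently, the propagation of weight-$3$ impurity from $\u_3$ to $\u_{3,n}$), which is indeed handled by the five-term sequence of the extension together with $H_1(\u_{3,n})\cong V(\lambda_3)\oplus V(\lambda_1)^n$.
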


\section{Speculation}

The proof that genus $3$ mapping class groups are not K\"ahler clearly fails
when $g>3$ as $\u_g$ is quadratically presented by Theorem~\ref{thm:genus>3}. So
the question arises as to whether higher genus mapping class groups are
K\"ahler. My instincts tell me that they are not, but I have no serious
evidence to support this.

One way to attempt to prove that a mapping class group $\G_{g,n}$ (or a group
commensurable with it) is K\"ahler is to construct a smooth complete subvariety
$X$ of some smooth finite (orbifold) covering $\M$ of $\M_{g,n}$ such that the
inclusion $X \to \M$ induces an isomorphism on fundamental groups. One can
attempt to do this using the generalized Lefschetz Theorem \cite[p.~150]{smt} as
the Satake compactification of $\M$ is projective. This implies that the
inclusion $X \hookrightarrow \M$ of a generic 2-dimensional linear section $X$
of $\M$ induces an isomorphism on fundamental groups. However, such an $X$
cannot be compact as one of the boundary components of $\M$ has codimension 2.
The known complete subvarieties of $\M$ are far from generic and have special
fundamental groups.

Suppose that $X$ is a subvariety of an orbifold covering $\M$ of $\M_{g,n}$.
Denote the Zariski closure of the image of $\pi_1(X,x) \to \Sp(H)$ by $R$ and
the completion of $\pi_1(X,x)$ relative to $\pi_1(X,x) \to R$ by $\cG_{X,x}$.
Denote the completion of $\pi_1(\M,x) \to \Sp(H)$ by $\cG_{\M,x}$.

\begin{conjecture}
If $X$ is a smooth complete subvariety of a finite orbifold covering $\M \to
\M_{g,n}$ of $\M_{g,n}$, then $\pi_1(X,x) \to \pi_1(\M,x)$ is not an
isomorphism. Even stronger, the induced homomorphism $\cG_{X,x} \to \cG_{\M,x}$
cannot be an isomorphism.
\end{conjecture}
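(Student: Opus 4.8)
The plan is to attack the stronger statement---that the induced map $\cG_{X,x} \to \cG_{\M,x}$ cannot be an isomorphism---since it implies the first: a $\pi_1$-isomorphism would force the image of $\pi_1(X,x)$ in $\Sp(H)$ to coincide with the (Zariski-dense) image of $\pi_1(\M,x)$, whence $R=\Sp(H)$, and the universal property of relative completion would make $\cG_{X,x}\to\cG_{\M,x}$ an isomorphism. So I would argue by contradiction: assume $\cG_{X,x}\cong\cG_{\M,x}$. Matching reductive quotients gives $R=\Sp(H)$ already. First I would record that $X$, being a complete subvariety of the quasi-projective variety $\M$, is projective and hence a compact K\"ahler manifold.

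Next I would transfer the rigidity input across the assumed isomorphism. Since the two relative completions agree and $R=\Sp(H)$ is connected, $H^1(\pi_1(X),V(\lambda))\cong H^1(\g_{X,x},V(\lambda))\cong H^1(\g_{\M,x},V(\lambda))\cong H^1(\pi_1(\M),V(\lambda))$, and Proposition~\ref{prop:putman_mcg} gives $H^1(\pi_1(\M),V(\lambda))=0$ for all $\lambda\notin\{\lambda_1,\lambda_3\}$. As $\End(H)\cong V(2\lambda_1)\oplus V(\lambda_2)\oplus V(0)$ and first cohomology with local coefficients depends only on $\pi_1$, this yields $H^1(X,\End H)=0$. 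The standard representation is irreducible because $R=\Sp(H)$, so Simpson's Theorem~\ref{thm:simpson} applies: $\rho|_{\pi_1(X)}$ underlies a polarized $\Q$-VHS, Corollary~\ref{cor:kahler} endows $\cG_{X,x}$ with a MHS, and Theorem~\ref{thm:u_quad} forces the Lie algebra $\u_{X,x}$ of its prounipotent radical to be quadratically presented. The assumed isomorphism identifies $\u_{X,x}$ with $\u_g$.

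In genus $3$ this closes the argument: Proposition~\ref{prop:h2} gives $\Gr^W_3 H^2(\u_3)\cong V(2\lambda_2+\lambda_3)+V(\lambda_1+\lambda_2)\neq 0$, so by Lemma~\ref{lem:quad_pres} the Lie algebra $\u_3$ is not quadratically presented---a contradiction. Thus the conjecture holds when $g=3$, by the very mechanism that proves Theorem~\ref{thm:main_mcg}.

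The hard part is $g\ge 4$, and I do not expect the argument above to suffice there: by Theorem~\ref{thm:genus>3} the Lie algebra $\u_g$ is quadratically presented, so the quadraticity obstruction evaporates and a genuinely new invariant is needed. Two avenues seem worth pursuing. The first is a refined \emph{weight} obstruction: by Proposition~\ref{prop:hodge_mcg} the weight filtration of $\u_g$ is intrinsic---it is the lower central series---so an abstract isomorphism $\u_{X,x}\cong\u_g$ respects $\Gr^W_\dot$, and hence the set of weights occurring in each $H^j(\u)$. On a compact K\"ahler base the purity of $H^j(X,\V_\lambda)$ for a polarized VHS (Deligne; cf.\ \cite{zucker}) bounds those weights in every degree $j$, whereas the MHS coming from the non-compact $\M_g$ may force a strictly higher weight in some $H^j(\u_g)$ with $j\ge 3$; exhibiting such a discrepancy would finish the proof. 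The second avenue is a dimension obstruction, pitting Diaz's bound $\dim_\C X\le g-2$ against the virtual cohomological dimension $4g-5$ of $\G_g$; the obstacle here is that $X$ need not be aspherical, so the relative completion does not obviously detect the cohomological dimension of $\pi_1(X)$. Making either obstruction bite uniformly for $g\ge 4$ is, I expect, the crux, and is precisely why the statement is posed only as a conjecture.
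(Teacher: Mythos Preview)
This statement is a \emph{conjecture}, not a theorem: the paper offers no proof and explicitly places it in the ``Speculation'' section, motivated precisely by the failure of the quadraticity obstruction when $g\ge 4$. There is therefore no proof in the paper to compare your proposal against.

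That said, your proposal is accurate as far as it goes. Your genus~$3$ argument is correct and is exactly the mechanism of Theorem~\ref{thm:main_mcg} specialized to a complete subvariety $X\subset\M$ (such an $X$ is projective, hence compact K\"ahler); the paper would regard the $g=3$ case of the conjecture as an immediate corollary of its main result, and your transfer of the $H^1$-vanishing across the assumed isomorphism of relative completions via the degree~$\le 1$ isomorphism (\ref{eqn:homom}) is the right way to justify rigidity on $X$. You are also right that for $g\ge 4$ the obstruction evaporates by Theorem~\ref{thm:genus>3}, and your two suggested avenues---a higher-degree weight discrepancy in $H^j(\u_g)$, or a Diaz-type dimension bound---are reasonable and in the spirit of the paper's own closing remarks, but neither is carried to a proof here, nor does the paper claim one. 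In short: you have correctly identified both what is provable with the paper's tools and where the genuine open problem lies; what you have written is not a proof of the conjecture for $g\ge 4$, because none is known.
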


Suppose that $X$ is a smooth (quasi-) projective variety with fundamental group
a finite index subgroup $\G$ of $\G_{g,n}$. Let $\M$ be the finite orbifold
covering of $\M_{g,n}$ with fundamental group $\G$. On a more speculative level,
one can ask if there must be a morphism $X \to \M$ that induces an isomorphism
on fundamental groups. Here one may need to insist that $g$ be sufficiently
large.

One way to approach this problem is to ask whether there is a family of
principally polarized abelian varieties (PPAVs) $f: A\to X$ whose associated
local system $R^1f_\ast\Z$ is the local system $\H$ corresponding the
fundamental representation $\rho : \G \to \G_{g,n}\to \Sp_g(\Z)$. The PVHS
produced by Simpson's Theorem (Thm.~\ref{thm:simpson}) may not have Hodge level
one\footnote{The level of a Hodge structure $V=\oplus V^{p,q}$ is the maximum of
the $p-q$ for which $V^{p,q}\neq 0$.} and may not have rank $2g$, so that it
may not arise from a family of PPAVs over $X$. Note that if $X$ is projective,
then the local system whose fiber over $x\in X$ is $H_1(\u_{X,x})$ is a PVHS
over $X$ which is is isomorphic to the local system $\Lambda^3_0 \H$.

We now enter the realm of the wildly speculative. If such a family $A \to X$ of
PPAVs does exist, one can speculate that it is isogenous to a family of
jacobians. This is not totally unreasonable as the relations in $\u_g$ seem to
be related to controlling the image of the period mapping $\M_g \to \A_g$. One
might call this the ``topological Schottky problem''. If it has a positive
solution, we can assume that $A \to X$ is a family of jacobians of curves of
compact type. The final step would be to argue that one can lift the period map
$X \to \A_g$ to a morphism $X\to \M$.

\appendix

\section{Mapping Class Groups in Low Genus are not K\"ahler}
\label{sec:low_genus}

For completeness we prove that no finite index subgroup of a mapping class group
in genus $0$, $1$ or $2$ is K\"ahler. As remarked in the introduction, the genus
$2$ case is due to Veliche \cite{veliche} and the genus 0 and 1 cases appear to
be folklore. Since I could not find references for the genus 0 and 1 cases, I am
including complete proofs here. By adapting Veliche's argument in genus 2, we
will prove the more general statement that groups commensurable with
hyperelliptic mapping class groups are not K\"ahler.

The hyperelliptic mapping class groups $\D_{g,n}$ is the orbifold fundamental
group of the moduli stack $\cH_{g,n}$ of $n$-pointed hyperelliptic curves of
genus $g\ge 2$. A topological definition will be given below. Since every curve
of genus $2$ is hyperelliptic, $\D_{2,n}=\G_{2,n}$. So the genus 2 case is a
special case of the hyperelliptic case.

\begin{theorem}
\label{thm:low_genus}
Suppose that $g$ and $n$ are non-negative integers satisfying $2g-2+n>0$. If
$g\le 2$, then no finite index subgroup of the mapping class group $\G_{g,n}$
can be a K\"ahler group. If $g\ge 2$, then no finite index subgroup of the
hyperelliptic mapping class group $\D_{g,n}$ can be a K\"ahler group.
\end{theorem}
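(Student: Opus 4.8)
The plan is to reduce every case to a single mechanism. Each group named in the statement is, up to commensurability and up to finite central and quotient subgroups, the fundamental group of one of the affine moduli spaces $\M_{0,m}$ or $\M_{1,n}$. For $g=0$ the full group $\G_{0,n}$ is a finite extension of the pure group $P\G_{0,n}=\pi_1(\M_{0,n})$; for $g=1$ the same holds with $\M_{1,n}$, the base of the tower being $\M_{1,1}$ with $\pi_1\cong\SL_2(\Z)$. For the hyperelliptic group I would invoke the Birman--Hilden central extension $1\to\Z/2\to\D_{g,n}\to\mathrm{Mod}\to 1$, whose quotient is a finite extension of a sphere mapping class group on the $2g+2$ branch points (together with the marked points), hence commensurable with $\pi_1(\M_{0,m})$ for a suitable $m$ with $m\ge 2g+2\ge 6$. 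Since being K\"ahler is inherited by finite-index subgroups and is insensitive to finite central or quotient kernels, it suffices to show that no finite-index subgroup of $\pi_1(\M_{0,m})$ $(m\ge 4)$ or of $\pi_1(\M_{1,n})$ is K\"ahler.

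The engine is the Fadell--Neuwirth fibration. The forgetful map $\M_{0,m}\to\M_{0,m-1}$ is a locally trivial bundle whose fiber is a genus-$0$ curve with $m-1$ punctures, so its fundamental group is free of rank $m-2$; this produces a normal subgroup fitting into $1\to F_{m-2}\to\pi_1(\M_{0,m})\to\pi_1(\M_{0,m-1})\to 1$, with $F_{m-2}$ nonabelian once $m\ge 5$. Given a finite-index subgroup $H\subseteq\pi_1(\M_{0,m})$ I would set $F'=H\cap F_{m-2}$: it is finite-index in $F_{m-2}$, hence finitely generated, free and nonabelian, it is normal in $H$, and $H/F'$ is infinite (being finite-index in the infinite group $\pi_1(\M_{0,m-1})$). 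The decisive input is then the structure theory of K\"ahler groups: a finitely generated normal subgroup of a K\"ahler group with infinite quotient cannot be a nonabelian free group, since by the fibration theorems of Napier--Ramachandran and Arapura--Bressler--Ramachandran such a subgroup must be commensurable with the fundamental group of the generic fiber of a holomorphic map to a curve, hence itself a K\"ahler -- in particular non-free -- group. Thus $H$ is not K\"ahler. The identical argument, using the once-punctured elliptic fiber of $\M_{1,n}\to\M_{1,n-1}$, disposes of $\pi_1(\M_{1,n})$ for $n\ge 2$.

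Only the two base cases $\M_{0,4}$ and $\M_{1,1}$ remain, with fundamental groups $F_2$ and $\SL_2(\Z)$; these are infinite and virtually free. Here I would argue that an infinite virtually free group has infinitely many ends, that the number of ends is a commensurability invariant, and that an infinitely-ended K\"ahler group is virtually the fundamental group of a compact Riemann surface of genus $\ge 2$ (Gromov; Arapura--Bressler--Ramachandran). Since a virtually free group is never virtually a genus-$\ge 2$ surface group, no finite-index subgroup of $F_2$ or $\SL_2(\Z)$ can be K\"ahler. The degenerate cases where the mapping class group is finite (e.g.\ $\G_{0,3}$) are excluded from the interesting range and may be set aside.

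The hard part is not any single computation but arranging the whole argument to be \emph{commensurability-invariant}. The naive obstruction -- that braid and sphere mapping class groups have odd first Betti number -- fails on passing to finite-index subgroups and therefore cannot establish the stated conclusion about \emph{all} finite-index subgroups; this is precisely why I route everything through the normal-free-subgroup criterion and the ends invariant, both of which survive passage to finite index. The remaining technical care lies in the hyperelliptic reduction with marked points, namely tracking how the $2g+2$ branch points and the $n$ marked points descend under the double cover so that the Birman--Hilden quotient is genuinely commensurable with some $\pi_1(\M_{0,m})$ with $m\ge 5$, and in verifying that the extracted normal subgroup $F'$ is nonabelian in every case, which is what pins down the hypotheses $m\ge 5$ and $n\ge 2$ and forces the separate treatment of the two virtually-free base cases.
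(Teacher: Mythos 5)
Your overall architecture (reduce everything to $\pi_1(\M_{0,m})$ and $\pi_1(\M_{1,n})$ via forgetful fibrations and Birman--Hilden, then kill each case with a structure theorem about Kähler groups) is the same as the paper's, but you have oriented the extensions the wrong way, and this creates a genuine gap. You forget \emph{one} point, so your exact sequence $1 \to F_{m-2} \to \pi_1(\M_{0,m}) \to \pi_1(\M_{0,m-1}) \to 1$ has \emph{free kernel} and a quotient that is merely infinite (indeed $\pi_1(\M_{0,m-1})$ is one-ended as soon as $m\ge 6$, being an iterated extension such as $F_3\rtimes F_2$). You then invoke the principle: \emph{a finitely generated nonabelian free normal subgroup of a Kähler group with infinite quotient is impossible}, attributing it to the fibration theorems of Arapura--Bressler--Ramachandran and Napier--Ramachandran. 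Neither theorem says this. The ABR theorem used in the paper requires the \emph{quotient} to have infinitely many ends (with finitely generated kernel); your quotients are one-ended. The Napier--Ramachandran filtered-ends theorem requires the pair $(G,N)$ to have at least $3$ filtered ends, and this does \emph{not} follow from ``$N$ free nonabelian, $G/N$ infinite'': for $G=F_2\times\Z^2$, $N=F_2$, the relevant cover is (wedge of circles)$\,\times\R^2$, which has one end, so the hypothesis fails even though your conditions hold. Finally, even where a fibration $X\to C$ does exist, what these theorems identify is the kernel of $\pi_1(X)\to\pi_1^{\mathrm{orb}}(C)$ with the \emph{image} of $\pi_1(\text{fiber})$ --- a quotient of a surface group, which in the presence of singular fibers can perfectly well be free --- not with a group commensurable with $\pi_1(\text{fiber})$. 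So the step ``hence Kähler, in particular non-free'' does not close. Your key lemma may well be a theorem, but it is strictly deeper than anything you cite, and your argument for it fails.

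The paper sidesteps all of this by arranging every extension to match ABR's hypotheses exactly: it fibers $\M_{0,n}$ over $\M_{0,4}$ by forgetting \emph{all but four} points, so the quotient is $\pi_1(\M_{0,4})\cong F_2$ (infinitely many ends) and the kernel is the configuration-space group $\pi_1(C_{n-4}(\C-\{0,1\}))$ (finitely generated); in genus $1$ it pulls back a finite-index free subgroup of $\SL_2(\Z)=\G_{1,1}$, again making the \emph{quotient} free; and in the hyperelliptic case it puts $\ker\rho_W$ in the same form by absorbing the central $\Z/2$ generated by the involution into the finitely generated kernel. That last point also repairs a secondary weakness in your write-up: you assert that being Kähler is ``insensitive to finite central or quotient kernels,'' but neither direction of that claim is automatic (only passage \emph{to} finite-index subgroups is). The ABR-shaped extension criterion is manifestly stable under both passage to finite-index subgroups and enlargement of the finitely generated kernel by finite groups, which is precisely why the paper's formulation handles ``all finite index subgroups'' and the Birman--Hilden $\Z/2$ without further argument. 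If you re-run your reduction but always fiber down to $\M_{0,4}$ (resp.\ to a free subgroup of $\SL_2(\Z)$), your proof becomes essentially the paper's.
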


The key point in the proof is that hyperelliptic mapping class groups and
mapping class groups in genera $\le 1$ can be written as an extension of a
non-abelian free group by a finitely generated group. So, since non-abelian free
groups have infinitely many ends, the main technical ingredient of the proof is
the following result of Arapura, Bressler and Ramachandran.

\begin{theorem}[{Arapura, Bressler, Ramachandran \cite{abr}}]
\label{thm:not_kahler}
If $\G$ is an extension
$$
1 \to K \to \G \to G \to 1
$$
of a group with infinitely many ends by a finitely generated  group, then $\G$
is not a K\"ahler group.
\end{theorem}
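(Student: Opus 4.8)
The plan is to argue by contradiction: suppose $\G = \pi_1(X,x)$ for a compact K\"ahler manifold $X$. Since K\"ahler groups are finitely presented, the quotient $G = \G/K$ is finitely generated. As $G$ has infinitely many ends, Stallings' structure theorem furnishes a nontrivial splitting of $G$ over a finite subgroup, hence an action of $G$ on the associated Bass--Serre tree $T$ with \emph{finite} edge stabilizers and no global fixed point; because $G$ has infinitely (rather than exactly two) ends, this action is moreover non-elementary, i.e. it fixes no point and no end of $T$. Composing with the surjection $\G \to G$ gives an action $\rho : \G \to \operatorname{Isom}(T)$ with the same image, in whose kernel $K$ lies.

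The engine of the argument is the factorization theorem for $\pi_1$-equivariant harmonic maps from compact K\"ahler manifolds to trees (Gromov--Schoen; this is precisely the Hodge-theoretic phenomenon that Arapura--Bressler--Ramachandran establish in the case needed). Since $\rho$ is non-elementary, the associated equivariant harmonic map $\tilde X \to T$ is non-constant, and the K\"ahler condition forces it to factor holomorphically: after passing to a finite \'etale cover $X'\to X$ with $\G' = \pi_1(X')$, there is a surjective holomorphic map with connected fibres $f: X' \to C$ onto a \emph{compact} hyperbolic orbicurve $C$ through which $\rho|_{\G'}$ factors. Writing $Q := \pi_1^\orb(C)$ and $N := \ker(\G' \to Q) = \im\{\pi_1(\text{fibre}) \to \G'\}$, the fibre is a compact K\"ahler manifold, so $N$ is finitely generated; and since $C$ is a \emph{closed} hyperbolic orbicurve, $Q$ is virtually a closed surface group $\Sigma_h$ with $h\ge 2$, in particular one-ended and a Poincar\'e-duality group of dimension $2$. \textbf{The main obstacle} is exactly this analytic factorization step: one must produce the non-constant pluriharmonic function (equivalently, the closed holomorphic $1$-form with discrete periods) detecting the ends of $T$ and then descend it, via a Castelnuovo--de Franchis / Gromov--Schoen argument, to a fibration over a compact curve. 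Everything after this is group theory.

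To finish, compare the two normal subgroups of $\G'$ that die under $\rho$. Put $L = \ker(\rho|_{\G'})$ and $K' = K\cap\G'$. Because the edge stabilizers of $T$ are finite, the kernel of $G' := \im(\G'\to G)$ acting on $T$ is finite, so $L/K'$ is finite; as $K$, and hence $K'$, is finitely generated, $L$ is finitely generated. Since $\rho|_{\G'}$ factors through $Q$, we have $N \subseteq L$, and $M := L/N \cong \ker(Q \to \operatorname{Isom}(T))$ is a \emph{quotient} of the finitely generated group $L$, hence finitely generated. But $Q/M \cong \G'/L \cong \im(G' \to \operatorname{Isom}(T))$ is commensurable with $G$ and therefore has infinitely many ends. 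This is impossible: a finitely generated normal subgroup $M \trianglelefteq Q$ of the virtual surface group $Q$ must have quotient that is finite or virtually cyclic --- at most two ends --- since an infinite, infinite-index normal subgroup of a surface group is free of infinite rank and so is never finitely generated. This contradiction shows that no such $X$ exists, so $\G$ is not a K\"ahler group. (When $K$ is finite, $L$ is finite, whence $M$ is finite and $Q/M$ is commensurable with the one-ended group $Q$, again contradicting that it has infinitely many ends.)
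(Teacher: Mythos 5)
The paper offers no proof of this statement --- it is quoted verbatim from Arapura--Bressler--Ramachandran \cite{abr} and used as a black box in the appendix --- so the only meaningful comparison is with the original argument of \cite{abr}, and your proposal is essentially a reconstruction of that argument: Stallings' theorem turns the infinitely many ends of $G$ into a non-elementary action on a Bass--Serre tree $T$ with finite edge stabilizers, an equivariant harmonic map (Gromov--Schoen/Korevaar--Schoen) pluriharmonically factors through a holomorphic fibration of a finite cover over a compact hyperbolic orbicurve, and a normal-subgroup argument in the orbicurve group finishes. You are right that the analytic factorization is the heart of the matter, but be aware that this step is precisely the content of \cite{abr}, so as written your proof is an outline that defers the theorem's main contribution to its own source; to make it self-contained you would need to carry out the existence of the finite-energy equivariant harmonic map (here the non-elementarity you establish --- no fixed point and no fixed end, which does follow since a finitely generated group fixing an end of a tree with finite edge stabilizers is virtually cyclic or has a bounded orbit --- is exactly the needed hypothesis, and one must note $T$ need not be locally compact, so Korevaar--Schoen rather than the original Gromov--Schoen setting is the right framework) and the Castelnuovo--de Franchis-type descent to the fibration.

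Your group-theoretic endgame is correct and is genuinely where the hypothesis that $K$ is finitely generated enters: finiteness of the kernel of $G\to\operatorname{Isom}(T)$ gives $[L:K']<\infty$, hence $L$ is finitely generated, hence $M=L/N\trianglelefteq Q$ is finitely generated, and Greenberg's theorem (a finitely generated normal subgroup of a surface group is trivial or of finite index) rules out $Q/M$ having infinitely many ends. One small blemish: your parenthetical assertion that the quotient of the virtual surface group $Q$ by a finitely generated normal subgroup ``must be finite or virtually cyclic'' is false when $M$ is finite --- then $Q/M$ is quasi-isometric to $Q$ and is one-ended, not finite or two-ended --- but the statement you actually use, namely that $Q/M$ has $0$ or $1$ ends and never infinitely many, is correct, so the contradiction stands.
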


Note that a finite index subgroup of such a group $\G$ is also an extension of a
group with infinitely many ends by a finitely generated group. So no finite
index subgroup of such a group $\G$ can be a K\"ahler group. Note also that the
class of such groups is closed under extension by a finitely generated group.

Suppose that $n\ge 1$ and that $U$ is a topological space. Denote the
configuration space of $n$ (labelled) points on $U$ by $C_n(U)$:
$$
C_n(U) = U^n - \text{(the fat diagonal)}
$$
where the fat diagonal consists of all $(u_1,\dots,u_n)\in U^n$ where the $u_j$
are not distinct.

\subsection{Genus $0$}
Note that $\M_{0,4}$ is isomorphic to $\C-\{0,1\}$; the point $t\in \C-\{0,1\}$
corresponds to the $4$-pointed rational curve $(\P^1;0,1,\infty,t)$. It follows
that $\G_{0,4}$ is free of rank 2 and therefore not K\"ahler. Suppose that $n\ge
4$. The fiber of the projection $\M_{0,n} \to \M_{0,4}$ that forgets all but the
first 4 points is a fibration. Its fiber over the point of $t\in \C-\{0,1\}$ is
the configuration space $C_{n-4}(\C-\{0,1\})$. This is a hyperplane complement
and has finitely generated fundamental group. So $\G_{0,n}$ is an extension of a
free group of rank 2 by a finitely generated group. Theorem~\ref{thm:not_kahler}
implies that no finite index subgroup of $\G_{0,n}$ can be K\"ahler.

\subsection{Genus $1$}
The genus 1 mapping class group $\G_{1,1}$ is isomorphic to $\SL_2(\Z)$. Since
$\SL_2(\Z)$ is virtually free, none of its finite index subgroups can be
K\"ahler. Let $G$ be a finite index free subgroup of $\G_{1,1}$. All such $G$
are free of rank $\ge 2$. Suppose that $n>1$. The inverse image $\G$ of
$G$ in $\G_{1,n}$ is an extension
$$
1 \to K \to \G \to G \to 1
$$
of $G$ by the fundamental group $K$ of the configuration space $C_{n-1}(E')$ of
a once punctured genus 1 surface. Since the group $K$ is finitely generated,
this implies that no finite index subgroup of $\G_{1,n}$ can be K\"ahler.

\subsection{Hyperelliptic mapping class groups}

We begin by recalling a few facts about hyperelliptic mapping class groups.
Suppose that $g\ge 2$ and that $S$ is a compact oriented surface of genus $g$. A
{\em hyperelliptic involution} $\sigma : S \to S$ is an orientation preserving
diffeomorphism of order 2 of $S$ with exactly $2g+2$ fixed points. By the
Riemann-Hurwicz formula, the quotient $S/\langle \sigma \rangle$ is a sphere.
From this it follows that all hyperelliptic involutions are conjugate in
$\Diff^+ S$.

The {\it hyperelliptic mapping class group} $\D_g$ is defined to be the set of
isotopy classes of orientation preserving diffeomorphisms of $S$ that commute
with $\sigma$:
$$
\D_g := \pi_0 (\text{centralizer of $\sigma$ in $\Diff^+ S$}).
$$
A result of Birman and Hilden \cite{birman-hilden} implies that the natural
homomorphism $\D_g \to \G_g$ is injective and that its image is the centralizer
of the isotopy class of $\sigma$ in $\G_g$.

Denote the set of fixed points of $\sigma$ by $W$. The natural homomorphism
$\rho_W : \D_g \to \Aut W$ is surjective. There is an obvious homomorphism
$\ker \rho_W \to \G_{0,2g+2}$ whose kernel is the subgroup generated by the
hyperelliptic involution. One therefore has an extension:
$$
1 \to \langle \sigma \rangle \to \ker \rho_W \to \G_{0,2g+2} \to 1.
$$
Since $\G_{0,2g+2}$ is an extension of a non-abelian free group by a finitely
generated group, so is $\ker\rho_W$. Theorem~\ref{thm:not_kahler} implies that
no finite index subgroup of $\ker\rho_W$ (and thus of $\D_g$) can be K\"ahler.

The hyperelliptic mapping class group $\D_{g,n}$ is defined to be the
restriction of the extension
$$
1 \to \pi_1(C_n(S),\ast) \to \G_{g,n} \to \G_g \to 1
$$
to $\D_g$. It is an extension
$$
1 \to \pi_1(C_n(S),\ast) \to \D_{g,n} \to \D_g \to 1.
$$
Since $\pi_1(C_n(S),\ast)$ is finitely generated, the inverse image of
$\ker\rho_W$ in $\D_{g,n}$ is an extension of a non-abelian free group by a
finitely generated group. Theorem~\ref{thm:not_kahler} implies that no finite
index subgroup of  $\D_{g,n}$ can be K\"ahler.

\end{document}